 \newtheorem{theorem}{Theorem}[section]
 \newtheorem{corollary}[theorem]{Corollary}
 \newtheorem{proposition}[theorem]{Proposition}
\theoremstyle{definition}
 \theoremstyle{remark} 
 \newtheorem{remark}[theorem]{Remark}
\newcommand{\p}{\partial}
\newcommand{\RR}{\mathbb{R}}
\renewcommand{\Re}{\operatorname{Re}}
\renewcommand{\Im}{\operatorname{Im}}
\numberwithin{equation}{section}
\begin{document}
\title[Structure of a fourth-order dispersive flow equation]
{Structure of a fourth-order dispersive flow equation 
through the generalized Hasimoto transformation
}
\author[E.~Onodera]{Eiji Onodera}
\address[Eiji Onodera]{Department of Mathematics and Physics, 
Faculty of Science and Technology, 
Kochi University, 
Kochi 780-8520, 
Japan}
\email{onodera@kochi-u.ac.jp}
\subjclass[2020]{
35A30, 
35G50, 
35Q35, 
53C21, 
53C35, 
53C56, 
53E40, 
}
\keywords
{Fourth-order dispersive flow equation;
Generalized bi-Schr\"odinger flow;
Generalized Hasimoto transformation;
System of nonlinear dispersive partial differential equations;
K\"ahler manifolds;
Complex Grassmannian 
}
\begin{abstract}
This paper focuses on a one-dimensional fourth-order nonlinear 
dispersive partial differential equation for curve flows on a K\"ahler manifold. 
The equation arises as a fourth-order extension of the one-dimensional 
Schr\"odinger flow equation, 
with physical and geometrical backgrounds.  
First, this paper presents a framework that can transform the equation 
into a system of fourth-order nonlinear dispersive partial differential-integral equations 
for complex-valued functions. 
This is achieved by developing the so-called generalized Hasimoto 
transformation, which enables us to handle general higher-dimensional 
compact K\"ahler manifolds. 
Second, this paper demonstrates the computations to obtain the explicit expression 
of the derived system for three examples of the compact K\"ahler manifolds, 
dealing with the complex Grassmannian as an example in detail.
In particular, the result of the computations 
when the manifold is a Riemann surface or a complex Grassmannian verifies 
that the expression of the system derived by our framework 
actually unifies the ones derived previously.  
Additionally, the computation when the compact K\"ahler manifold has a constant 
holomorphic sectional curvature, 
the setting of which has not been investigated,  
is also demonstrated. 
\end{abstract}
\maketitle
\section{Introduction}
\label{section:introduction}
\subsection{Setting of the problem and previous related results}
\label{subsection:setting}
Let $N$ be a K\"ahler manifold 
of complex dimension $n\in \mathbb{N}$ 
with complex structure $J$ and K\"ahler metric $h$, 
and let $\nabla$ and $R$ denote the Levi-Civita connection associated to $h$ and the 
Riemann curvature tensor respectively.  
\footnote{Throughout this paper, the definition of $R$ is adopted to be   
$
R(X,Y)Z:=\nabla_X\nabla_YZ-\nabla_Y\nabla_XZ-\nabla_{[X,Y]}Z
$ 
for any vector fields $X,Y,Z$ on $N$ where $[X,Y]:=XY-YX$.} 
This paper investigates a one-dimensional fourth-order nonlinear 
dispersive partial differential equation (PDE) for curve flows on $N$,  
which is formulated by 
\begin{alignat}{2}
 & u_t
  =
  a\,J_u\nabla_x^3u_x
  +
  \lambda\, J_u\nabla_xu_x
  +
  b\, R(\nabla_xu_x,u_x)J_uu_x
  +
  c\, R(J_uu_x,u_x)\nabla_xu_x
\label{eq:pde}
\end{alignat}
for a smooth map $u=u(t,x):(-T,T)\times \RR\to N$. 
Here, $0<T\leqslant \infty$, 
and $a\ne 0$, $b$, $c$, $\lambda$ are real constants,  
$u_t=du\left(\frac{\p}{\p t}\right)$, $u_x=du\left(\frac{\p}{\p x}\right)$, 
$\nabla_t$ and $\nabla_x$ denote the covariant derivatives 
along $u$ with respect to $t$ and $x$ respectively, 
and $J_u$ denotes the complex structure at $u=u(t,x)\in N$. 
Geometrically,  \eqref{eq:pde} describes
the relationship among elements of $\Gamma(u^{-1}TN)$, 
where $\Gamma(u^{-1}TN)$ denotes the set of smooth sections of $u^{-1}TN$. 
\par 
The equation \eqref{eq:pde} 
for curve flows on the canonical two-sphere $\mathbb{S}^2$ 
with additional assumptions $\lambda=1$ and $c=3(a-b)/2$ 
is the typical example with physical backgrounds, 
which is known to coincide with the following 
three-component system of PDEs  
\begin{equation}
\label{eq:ss3}
u_t=u\wedge 
\left\{
a\,u_{xxxx}+ u_{xx}
+(5a-b)(u_{xx},u_x)u_x
+\dfrac{5a-b}{2}(u_x,u_x)u_{xx}
\right\}
\end{equation}
for $u=u(t,x):(-T,T)\times \RR\to \mathbb{S}^2\hookrightarrow \RR^3$. 
Here, $\wedge$ and $(\cdot,\cdot)$ denote  
the exterior and the inner product in $\RR^3$ respectively, 
$J_u=u\wedge$ on $T_u\mathbb{S}^2$, 
and $h$ corresponds to the metric induced from $(\cdot,\cdot)$.   
The system \eqref{eq:ss3} was 
proposed by Lakshmanan et al.(\cite{LPD,PDL}), 
modeling the continuum limit of a one-dimensional isotropic Heisenberg 
ferromagnetic spin chain systems with nearest neighbor bilinear 
and bi-quadratic exchange interaction. 
It was proved in \cite{LPD,PDL} that \eqref{eq:ss3}  
can be reduced (equivalently if $\gamma_2=-\frac{5}{2}\gamma_1$) to the following fourth-order nonlinear Schr\"odinger equation 
\begin{align}
&\sqrt{-1}q_t
+\gamma_1q_{xxxx}
+(q_{xx}+2|q|^2q)
-4\delta_1|q|^2q_{xx}
-4\delta_2q^2\overline{q}_{xx}
\nonumber
\\
&\quad
-4\delta_3q|q_x|^2
-4\delta_4q_x^2\overline{q}
-24\delta_5|q|^4q
=0
\label{eq:4shro}
\end{align}
for $q=q(t,x):(-T,T)\times \RR\to \mathbb{C}$, 
where  
$\delta_1=3\gamma_1+2\gamma_2$, 
$\delta_2=2\gamma_1+\gamma_2$, 
$\delta_3=9\gamma_1+4\gamma_2$,
$\delta_4=\frac{7}{2}\gamma_1+2\gamma_2$,  
$\delta_5=\gamma_1+\frac{1}{2}\gamma_2$, 
and 
$(\gamma_1,\gamma_2)=(a,-(5a-b)/2)$. 
It was also revealed in \cite{LPD,PDL} that 
\eqref{eq:4shro} is completely integrable 
in the sense of Painlev\'e singularity structure analysis
if and only if $\gamma_2=-\frac{5}{2}\gamma_1$. 
Additionally, \eqref{eq:4shro} with
$\gamma_2=-\frac{5}{2}\gamma_1$ arises in other contexts as well, such as 
the dynamics of a one-dimensional anisotropic Heisenberg ferromagnetic spin chain with octupole-dipole interaction in the continuum limit(\cite{DKA}) and    
the molecular excitations along the hydrogen bonding spine in an alpha helical protein with higher-order molecular interactions under specific parameter choices(\cite{DL}).
\par 
The equation \eqref{eq:pde} for curve flows 
on $\mathbb{S}^2$ with $\lambda=1$ and $c=3(a-b)/2$
can be also derived from 
the so-called Fukumoto-Moffatt model for a vortex filament (\cite{fukumoto,FM}). 
The equation proposed in \cite{fukumoto,FM} describes the motion of 
an arc-length-parameterized space curve in $\RR^3$,  
denoted by 
$\vec{\bf{X}}=\vec{\bf{X}}(t,x):(-T,T)\times \RR\to \RR^3$
here, 
which models the three-dimensional dynamics of a vortex 
filament in an incompressible viscous fluid, considering
the deformation effect of the vortex core due to the self-induced strain. 
If $\vec{\bf{X}}$ satisfies the vortex filament equation, 
then the velocity vector $u:=\vec{\bf{X}}_x$ 
takes values in $\mathbb{S}^2$ and satisfies  
\eqref{eq:ss3}. 
Furthermore, it is proved by \cite{fukumoto,FM}) that 
the equation for $\vec{\bf{X}}$ is transformed to  
\eqref{eq:4shro} via the so-called Hasimoto transformation(\cite{Hasimoto}). 
\par 
In the context of geometric dispersive PDEs, 
\eqref{eq:pde} is regarded as a fourth-order extension of the 
so-called one-dimensional Schr\"odinger flow equation 
\begin{equation}
u_t=J_u\nabla_xu_x. 
\label{eq:SF}
\end{equation}
In fact, \eqref{eq:pde} can be regarded as 
the so-called generalized bi-Schr\"odinger flow equation 
if $(N,J,h)$ is a locally Hermitian symmetric space and $c=3(a-b)/2$: 
The generalized bi-Schr\"odinger flow equation was originally 
introduced by Ding and Wang in \cite{DW2018} 
as a PDE for time-dependent maps $u=u(t,x):(-T,T)\times M\to N$ 
where $M$ is a Riemannian manifold and 
$N$ is a K\"ahler or para-K\"ahler manifold. 
When $M=\RR$ with the Euclidean metric and $(N,J,h)$ is a K\"ahler manifold, 
it is defined as the following Hamiltonian gradient flow equation 
\begin{equation}
u_t=J_u\nabla E_{\alpha,\beta,\gamma}(u), 
\label{eq:bibi} 
\end{equation}  
where $\beta\ne 0$ and $\alpha, \beta$ are real constants 
and $\nabla E_{\alpha,\beta,\gamma}(u)$ denotes 
the gradient (not the Levi-Civita connection only here) of the energy functional
\begin{align}
E_{\alpha,\beta,\gamma}(u)
&:=
\frac{\alpha}{2}\int_{\RR}h(u_x,u_x)\,dx
+\frac{\beta}{2}\int_{\RR}h(\nabla_xu_x,\nabla_xu_x)\,dx
\nonumber
\\
&\quad
+
\gamma
\int_{\RR}h(R(u_x,J_uu_x)J_uu_x,u_x)\,dx.
\nonumber
\end{align}   
As is pointed out in \cite{onodera4}, 
if $(N,J,h)$ is a locally Hermitian symmetric space, 
then the explicit expression of 
\eqref{eq:bibi} coincides with \eqref{eq:pde} under the setting  
\begin{equation}
a=\beta, b=\beta+8\gamma, c=\frac{3(a-b)}{2}, \lambda=-\alpha
\label{eq:11021}
\end{equation}
\par 
Additionally, another fourth-order extension of \eqref{eq:SF}, 
also generalizing \eqref{eq:ss3},  
has been investigated in \cite{chihara2,CO2,onodera0,onodera3}, 
which is formulated by 
\begin{align}
u_t&=a_1J_u\nabla_x^3u_x+a_2J_u\nabla_xu_x
+a_3h(u_x,u_x)J_u\nabla_xu_x+a_4h(\nabla_xu_x,u_x)J_uu_x
\label{eq:4ono}
\end{align}
for curve flow $u=u(t,x)$ on a K\"ahler manifold $(N,J,h)$. 
As is shown in \cite{onodera4},
\eqref{eq:4ono} is also regarded as  \eqref{eq:bibi}
provided that $(N,J,h)$ is a Riemann surface with 
constant Gaussian curvature. 
However, the assumption seems to be a bit strong geometrically. 
It can be said that \eqref{eq:pde} modifies \eqref{eq:4ono} to be 
geometrically more reasonable by considering some kind of symmetry and 
curvature on $(N,J,h)$ as a 
K\"ahler manifold. 
\par 
This paper is concerned with correspondences between 
geometric dispersive PDEs for curve flows 
and systems of nonlinear PDEs for complex-valued functions
 (or equations for complex-matrix-valued functions), 
such as that between \eqref{eq:ss3} for curve flows on $\mathbb{S}^2$ 
and \eqref{eq:4shro}. 
These correspondences 
have attracted much attention from researchers in mathematical physics, 
differential geometry, and theory of PDEs.
Understanding them has the potential to promote 
the studies in both directions complementarily each other. 
In this connection, we next focus on two seemingly different methods 
developed in previous studies of the geometric dispersive PDEs 
(except of the fourth-order equations \eqref{eq:pde}, \eqref{eq:ss3}, and \eqref{eq:4ono} 
which is stated later in Introduction).   
\par
The first type of method is based on the development map 
acting on a space of smooth curves on $N$,  
embedding $N$ as an adjoint orbit in an associated Lie algebra.  
Notably, the method essentially applies the properties of 
Hermitian symmetric spaces as $(N,J,h)$. 
It is to be stated first that Zakharov and Takhtadzhyan \cite{ZT} showed  
that the Schr\"odinger flow equation \eqref{eq:SF} for curve flows on $\mathbb{S}^2$
is equivalent to the cubic nonlinear Schr\"odinger equation(NLS) for complex-valued functions. 
As the generalization, it was established by Terng and Uhlenbeck \cite{TU}
that \eqref{eq:SF} for curve flows on 
a compact complex Grassmannian is equivalent to the matrix NLS which was first 
studied by  Fordy and Kulish \cite{FK}. 
It is also pointed out in \cite{TU} that the existence of a time-global solution 
to the initial value problem of \eqref{eq:SF} for curve flows on the compact complex Grassmannian follows from the correspondence.    
The above equivalence with respect to \eqref{eq:SF} 
was investigated further by Terng and Thorbergsson \cite{TT} 
for the other three types of compact Hermitian symmetric spaces as $(N,J,h)$.      
Additionally, the equivalence with respect to \eqref{eq:SF} for curve flows 
on $\mathbb{S}^2$ in the periodic setting in $x$
has been obtained by Liu \cite{liu} recently. 
The interested readers can also refer to, e.g., \cite{AA,DW2018,DZ2021}, 
for more details related to the method. 
\par 
The second type of method, called the generalized Hasimoto transformation,   
is to transform a geometric dispersive PDE for curve flows into a nonlinear 
dispersive PDE for complex-valued functions or a system of them, 
by constructing a parallel (in $x$-direction) orthonormal frame along a curve flow $u=u(t,x)$ 
and then by expressing the equation satisfied by the components of $u_x$ 
with respect to the frame. 
We expect that this method can handle Riemann surfaces or more general
K\"ahler manifolds as $(N,J,h)$ without imposing any symmetry or curvature conditions, 
and the expressions of the derived equations or systems are simpler, 
in that they are semilinear ones without any second-highest order derivatives in spatial variable. 
In addition, we expect that the derived expressions present insights on the 
essential structure of the original geometric dispersive PDEs,    
although constructing the inverse of the transformation remains further discussion. 
Indeed, these insights have been sometimes applicable to the time-global solvability of 
the initial value problem for 
original geometric dispersive PDEs. 
See, e,g, the work by Chang et al. (\cite{CSU}), 
Nahmod et al. (\cite{NSVZ}), 
Rodnianski et al.(\cite{RRS}) 
for \eqref{eq:SF}. 
Moreover, also for some analogous third-order geometric dispersive PDEs for curve flows 
on compact Riemann surfaces, 
equations derived from 
the generalized Hasimoto transformation have been investigated 
in \cite{onodera0} and in \cite{SW2011,SW2013}, 
independently on whether any direct applications to the time-global solvability of 
their initial value problem exist or not. 
Notably, many results based on the method seem to handle 
``open" curve flows, where 
the spacial domain of $x$ is the real line $\RR$. 
On the other hand, if a geometric dispersive PDE for closed curve flows is considered, 
then the method requires some modifications, 
involving holonomy corrections along the closed curves to transform 
into PDEs for complex-valued functions that are periodic in $x$,   
which becomes rather complicated. 
Nonetheless, this case has been also investigated for \eqref{eq:SF}. 
See \cite{Koiso1995,Koiso1997}  for closed curve flows on locally Hermitian symmetric spaces, 
and \cite{RRS} for those on Riemann surfaces. 
\par 
In contrast, investigating our fourth-order  PDE \eqref{eq:pde} 
in this context still remains unexplored. 
Related previous results on the correspondences are 
limited to when $N$ is any one of 
$G_{n_0,k_0}$  (including  $\mathbb{S}^2\cong G_{2,1}$), 
$G_{n_0}^{k_0}${\footnote{Here and hereafter, $G_{n_0,k_0}$ (resp. $G_{n_0}^{k_0}$) for integers 
$n_0,k_0$ satisfying $1\leqslant k_0<n_0$
denotes the compact (resp. noncompact) complex Grassmannian 
inherited with the structure as a Hermitian symmetric space.},
 and a Riemann surface, 
which are stated 
more concretely in the next two paragraphs.  
\par 
When $N$ is either of $G_{n_0,k_0}$
or $G_{n_0}^{k_0}$, 
our equation \eqref{eq:pde} with $c=3(a-b)/2$ for $u=u(t,x):(-T,T)\times \RR\to N$
is proved to be equivalent to a fourth-order matrix nonlinear 
(Schr\"odinger-like) differential-integral equation, which 
follows from the results by Ding and Wang (\cite{DW2018}):  
Taking the compact case $N=G_{n_0,k_0}$ of complex dimension 
$n=k_0(n_0-k_0)$ as the example,  
the authors in \cite{DW2018} investigated \eqref{eq:bibi} 
(not as \eqref{eq:pde}), 
and equivalently transformed it to 
\begin{align}
&\sqrt{-1}q_t
-
\alpha
\biggl\{
q_{xx}+2qq^{*}q\biggr\}
+\beta
\biggl\{
q_{xxxx}+4q_{xx}q^{*}q+2qq^{*}_{xx}q+4qq^{*}q_{xx}
\nonumber
\\
&\quad
+2q_xq^{*}_xq+6q_xq^{*}q_x+2qq^{*}_xq_x+6qq^{*}qq^{*}q
\biggr\}
-2(\beta+8\gamma)
\biggl\{
(qq^{*}q)_{xx} 
\nonumber
\\
&\quad
+2qq^{*}qq^{*}q
+q\left(\int_{0}^x
q^{*}(qq^{*})_sq\,ds
\right)
+
\left(
\int_{0}^x
q(q^{*}q)_sq^{*}\,ds
\right)q
\biggr\}=0
\label{eq:mns}
\end{align}
for $q=q(t,x):(-T,T)\times \RR\to \mathcal{M}_{k_0\times (n_0-k_0)}$,  
where $\mathcal{M}_{k_0\times (n_0-k_0)}$ stands for the space of  
$k_0\times (n_0-k_0)$ complex-matrices and 
$q^{*}=\overline{q}^t$ is the transposed conjugate matrix-valued function of $q$. 
The proof basically employs the first type of method established by \cite{TU} 
based on the development map, 
combined with the idea of PDEs with given (non-zero) 
curvature representation in the category of Yang-Mills theory. 
They succeed to establish the results, 
even though the considered equation 
is not completely integrable unless $\beta+8\gamma=0$, 
revealing also that the nonlocal terms of integral type in \eqref{eq:mns} 
vanish under the integrable condition.  
It is also pointed out in \cite{DW2018} that, if $N=G_{2,1}\cong \mathbb{S}^2$, 
then the nonlocal terms of \eqref{eq:mns} vanish without the integrable condition, 
and \eqref{eq:mns} reduces to \eqref{eq:4shro} under the setting \eqref{eq:11021}. 
\par 
When $(N,J,h)$ is a Riemann surface with constant Gaussian curvature, 
our equation \eqref{eq:pde} with $c=3(a-b)/2$ for $u=u(t,x):(-T,T)\times \RR\to N$
is proved to be transformed to a fourth-order nonlinear 
dispersive PDE without non-local integral terms 
for complex-valued functions, 
which follows from the results by Ding and Zhong (\cite{DZ2021}): 
The authors in \cite{DZ2021} investigated \eqref{eq:bibi} 
for $u=u(t,x):(-T,T)\times \RR\to N$ where $(N,J,h)$ is a Riemann surface. 
Assuming the existence of a time-independent edge point $\displaystyle\lim_{x\to\infty}u(t,x)\in N$, 
they employed the second type of method based on the generalized Hasimoto transformation, 
which transformed the equation into the following differential-integral equation 
\begin{align}
&\sqrt{-1}q_t
-
\alpha\left(
q_{xx}+\frac{\kappa(u)}{2}|q|^2q
\right)
\nonumber
\\
&
+\beta
\biggl\{
q_{xxxx}
-\frac{\kappa(u)}{2}
\left(
2|q_x|^2q-2|q|^2q_{xx}-\overline{q}q_x^2
\right)
-\frac{(\kappa(u))_x}{2}
\left(
q^2\overline{q}_x-|q|^2q_x
\right)
\biggl\}
\nonumber
\\
&
-\gamma
\biggl[
3(\kappa(u))^2|q|^4q
+\left\{
3(\kappa(u))_x|q|^2q+4\kappa(u)(|q|^2q)_x
\right\}_x
\biggl]
-qW(t,x)=0
\label{eq:DZ}
\end{align}
for complex-valued function $q=q(t,x)$, 
where  
$$
W(t,x)=\frac{1}{2}\int_x^{\infty}
(\kappa(u))_{\tilde{x}}\left(
\alpha|q|^2
-\beta
(q\overline{q}_{\tilde{x}\tilde{x}}+\overline{q}q_{\tilde{x}\tilde{x}}
-q_{\tilde{x}}\overline{q}_{\tilde{x}}
) 
+6\gamma \kappa(u)|q|^4
\right)
d\tilde{x}
$$
and $(\kappa(u))(t,x):=\kappa(u(t,x))$ is the Gaussian curvature at $u(t,x)\in N$.  
Notably,  
the expression is informative for our equation \eqref{eq:pde} with $c=3(a-b)/2$
only if $\kappa(u)$ is constant and hence the non-local term vanishes. 
This is because \eqref{eq:pde} is not the same as 
\eqref{eq:bibi} in general unless $(N,J,h)$ is a locally Hermitian symmetric space. 
Additionally, the method of transformation
has been developed to investigate 
\eqref{eq:4ono} (not \eqref{eq:pde}) for closed curve flows on compact Riemann surfaces 
by Chihara (\cite{chihara2}), which gives essential insights on the 
structure of equations satisfied by higher-order $x$-directional 
covariant derivatives of a solution to \eqref{eq:4ono}. 
The obtained results are valid to our equation \eqref{eq:pde} only if the compact 
Riemann surface $(N,J,h)$ has a constant Gaussian curvature.      
\subsection{Main results in this paper}
\label{subsection:main}
The main results in this paper are the following two. 
\par
First, for general compact K\"ahler manifold $(N,J,h)$ 
of complex dimension $n\in \mathbb{N}$, 
we present a framework that can transform \eqref{eq:pde} 
for $u=u(t,x):(-T,T)\times \RR\to N$
into an $n$-component system of fourth-order nonlinear dispersive partial differential-integral 
equations for complex-valued functions. 
To state our results precisely, let $u^{\infty}$ be a fixed point in $N$, 
and let $C_{u^{\infty}}((-T,T)\times \RR; N)$ denote the set of smooth maps 
$u=u(t,x):(-T,T)\times \RR\to N$ such that 
$\displaystyle\lim_{x\to -\infty}u(t,x)=u^{\infty}$ 
and $u_x(t,\cdot):\RR\to (u(t,\cdot))^{-1}TN$ is in the Schwartz class for any $t\in (-T,T)$.  
(The assumption on the set of maps is the same as that in \cite{TU}.) 
\begin{theorem}
\label{theorem:momo}
Under the above setting, 
\eqref{eq:pde} for  $u\in C_{u^{\infty}}((-T,T)\times \RR; N)$ 
is transformed into the system 
for $Q_1,\ldots,Q_n: (-T,T)\times \RR\to \mathbb{C}$ of the form  
\begin{align}
&\sqrt{-1}\p_tQ_j+(a\p_x^4+\lambda \p_x^2)Q_j
\nonumber
\\
&=
   d_1\sum_{p,q,r=1}^n
    S_{p,q,r}^{j}
    \p_x^2Q_p\overline{Q_q}Q_r
       +d_2
            \sum_{p,q,r=1}^n
        S_{p,q,r}^{j}
        Q_{p}\overline{\p_x^2Q_q}Q_r
    +d_3\sum_{p,q,r=1}^n
        S_{p,q,r}^{j}
        \p_xQ_p\overline{\p_xQ_q}Q_r
        \nonumber
        \\
    &\quad
       +d_4
        \sum_{p,q,r=1}^n
        S_{p,q,r}^{j}
        \p_xQ_p\overline{Q_q}\p_xQ_r
        +d_5
            \sum_{p,q,r=1}^n
            \p_x(
            S_{p,q,r}^{j})
            \p_xQ_p\overline{Q_q}Q_r
          \nonumber
          \\
          &\quad
            +d_6
                \sum_{p,q,r=1}^n
                \p_x(
            S_{p,q,r}^{j})
            Q_{p}\overline{\p_xQ_q}Q_r
    -\lambda
    \sum_{p,q,r=1}^n
    S_{p,q,r}^{j}Q_p\overline{Q_q}Q_r
\nonumber 
\\
&\quad 
+\sum_{r=1}^{n}
\left(\int_{-\infty}^x
f_{j,r}^1(Q,\p_xQ)(t,y)dy
\right)
Q_r
+\sum_{r=1}^{n}
\left(\int_{-\infty}^x
f_{j,r}^2(Q,\p_xQ)(t,y)dy
\right)
Q_r
\label{eq:r51251}
\end{align}
for each $j\in \left\{1,\ldots,n\right\}$.  
Here,  
$d_1=-a-b-2c$, 
$d_2=-a+b$, 
$d_3=a+b-2c$, 
$d_4=-b-2c$, 
$d_5=a-b-2c$, 
$d_6=a+b$, 
and $S_{p,q,r}^j$ for $p,q,r,j\in \left\{1,\ldots,n\right\}$ 
are complex-valued functions depending on $u$ 
which is defined by \eqref{eq:711}, and  
\begin{align}
f_{j,r}^1(Q,\p_xQ)
&=
-(b+2c)
\sum_{p,q,\alpha,\beta,\gamma=1}^n
S_{p,q,r}^{j}\overline{S_{\alpha,\beta,\gamma}^q}
\overline{\p_xQ_{\alpha}}Q_{\beta}
\overline{Q_{\gamma}}Q_p
\nonumber
\\
&\quad
-(b+2c)
\sum_{p,q,\alpha,\beta,\gamma=1}^n
S_{p,q,r}^{j}S_{\alpha,\beta,\gamma}^p
\p_xQ_{\alpha}\overline{Q_{\beta}}
Q_{\gamma}\overline{Q_q}
\nonumber
\\
&\quad
+b
\sum_{p,q,\alpha,\beta,\gamma=1}^n
          S_{p,q,r}^{j}\overline{S_{\alpha,\beta,\gamma}^q}
          \overline{Q_{\alpha}}\p_xQ_{\beta}
          \overline{Q_{\gamma}}Q_p
\nonumber
\\
&\quad
+b
\sum_{p,q,\alpha,\beta,\gamma=1}^n
          S_{p,q,r}^{j}S_{\alpha,\beta,\gamma}^p
          Q_{\alpha}\overline{\p_xQ_{\beta}}
          Q_{\gamma}\overline{Q_q}, 
\label{eq:r6201}
\\
f_{j,r}^2(Q,\p_xQ)
&=
-a
\sum_{p,q=1}^n
  \p_x^2(S_{p,q,r}^{j}) 
  (\p_xQ_p\overline{Q_q}+Q_p\overline{\p_xQ_q})
\nonumber
\\
&\quad 
-3a
\sum_{p,q=1}^n 
  \p_x(S_{p,q,r}^{j})
  \p_xQ_p\overline{\p_xQ_q}
+\lambda
 \sum_{p,q=1}^n  
 \p_x(S_{p,q,r}^{j})
 Q_p\overline{Q_q}. 
\label{eq:r6202}
\end{align}
for each $j,r\in \left\{1,\ldots,n\right\}$. 
\end{theorem}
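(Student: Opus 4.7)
The plan is to implement a generalized Hasimoto transformation adapted to the K\"ahler setting. First, I would construct a smooth $x$-parallel orthonormal frame $\{e_1,\ldots,e_n,J_ue_1,\ldots,J_ue_n\}$ of $u^{-1}TN$ along each slice $u(t,\cdot)$, initialised at $x=-\infty$ from a fixed unitary frame of $T_{u^{\infty}}N$. Since $(N,J,h)$ is K\"ahler, $\nabla J=0$, so imposing $\nabla_xe_j=0$ automatically gives $\nabla_x(J_ue_j)=0$ and the frame stays orthonormal for every $(t,x)$. Identifying the real $2n$-dimensional fibre with $\mathbb{C}^n$ via $e_j\leftrightarrow$ standard basis and $J_u\leftrightarrow\sqrt{-1}$, I write
\[
u_x=\sum_{j=1}^{n}Q_j\,e_j,\qquad Q_j:(-T,T)\times\RR\to\mathbb{C},
\]
so that, by $x$-parallelism, each iterated covariant derivative reduces to $\nabla_x^ku_x=\sum_j\p_x^kQ_j\,e_j$. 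The assumption $u\in C_{u^{\infty}}((-T,T)\times\RR;N)$ ensures rapid decay of the $Q_j$ and their $x$-derivatives, so every integral from $-\infty$ to $x$ and every integration by parts used below will be well defined.

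Next, I would expand the right-hand side of \eqref{eq:pde} in this frame. The dispersive terms are immediate: $a\,J_u\nabla_x^3u_x=\sqrt{-1}\,a\sum_j\p_x^3Q_j\,e_j$ and $\lambda\,J_u\nabla_xu_x=\sqrt{-1}\,\lambda\sum_j\p_xQ_j\,e_j$. For the curvature terms I would use the expansion coefficients $S_{p,q,r}^{j}$ from \eqref{eq:711} together with the K\"ahler identities $R(X,Y)J=JR(X,Y)$, $R(JX,JY)=R(X,Y)$, the antisymmetry of $R$, and the first Bianchi identity to collect $b\,R(\nabla_xu_x,u_x)J_uu_x$ and $c\,R(J_uu_x,u_x)\nabla_xu_x$ as cubic polynomials in $Q$ and $\overline{Q}$ whose coefficients are built from $S_{p,q,r}^{j}$. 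This produces an explicit expression $u_t=\sum_{j=1}^{n}F_j\,e_j$.

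I would then extract the equation for $\p_tQ_j$ through the torsion-free identity $\nabla_tu_x=\nabla_xu_t$. Writing
\[
\nabla_tu_x=\sum_{j=1}^{n}(\p_tQ_j)\,e_j+\sum_{k=1}^{n}Q_k\,\nabla_te_k,
\]
and applying $\nabla_x$ to the expansion of $u_t$ (which is trivial on the frame), the dispersive part contributes $\sqrt{-1}\,a\,\p_x^4Q_j+\sqrt{-1}\,\lambda\,\p_x^2Q_j$; multiplying the resulting identity by $\sqrt{-1}$ and transposing these two terms to the left yields the combination $a\,\p_x^4Q_j+\lambda\,\p_x^2Q_j$ that appears on the left of \eqref{eq:r51251}. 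The $\p_x$-derivatives of the curvature terms, carried out via the product rule $\nabla_x[R(X,Y)Z]=(\nabla_xR)(X,Y)Z+R(\nabla_xX,Y)Z+R(X,\nabla_xY)Z+R(X,Y)\nabla_xZ$, then generate the six local cubic monomials: the ones carrying $\p_x^2Q$ or $\p_xQ\,\overline{\p_xQ}$ arise from the $R(\nabla_x^2u_x,u_x)$-type and $R(\nabla_xu_x,u_x)J_u\nabla_xu_x$-type contributions (the terms $R(\nabla_xu_x,\nabla_xu_x)$ vanishing by antisymmetry), while the two monomials carrying $\p_x(S_{p,q,r}^{j})$ come from the $(\nabla_xR)$-pieces. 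Careful use of the Bianchi and K\"ahler symmetries is what reorganises these contributions into exactly the six coefficients $d_1,\ldots,d_6$.

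Finally, for the remaining term $\sum_kQ_k\nabla_te_k$ I would solve the $x$-ODE
\[
\nabla_x\nabla_te_k=R(u_x,u_t)\,e_k,
\]
obtained from the identity $[\nabla_x,\nabla_t]=R(u_x,u_t)$ together with $\nabla_xe_k=0$, with the boundary condition $\nabla_te_k\to 0$ as $x\to-\infty$ inherited from the frame construction; this already produces the integration from $-\infty$ seen in \eqref{eq:r51251}. Substituting the expansion of $u_t$ into the integrand yields terms of order up to $\p_x^3Q$; integration by parts against the leading $a\,J_u\nabla_x^3u_x$ and $\lambda\,J_u\nabla_xu_x$ contributions (using decay at $-\infty$ and $\nabla J=0$) transfers the extra $\p_x$ onto the coefficients $S_{p,q,r}^{j}$, reducing the integrand to second order and matching it with the two explicit functions $f_{j,r}^{1}$ and $f_{j,r}^{2}$ of \eqref{eq:r6201}--\eqref{eq:r6202}. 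Projecting the full identity onto $e_j$ via the Hermitian inner product induced by $h$ and $J$ then produces \eqref{eq:r51251}. The main obstacle I anticipate is the combinatorial bookkeeping in these last two paragraphs: verifying that after all K\"ahler--Bianchi resymmetrisation and all integrations by parts the local cubic part collapses \emph{exactly} to the six monomials with the stated $d_1,\ldots,d_6$, and the nonlocal remainder collapses \emph{exactly} to $f_{j,r}^{1}$ and $f_{j,r}^{2}$. No further geometric ingredient is needed; the difficulty is purely arithmetic consistency.
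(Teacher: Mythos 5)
Your plan reproduces the paper's proof essentially step for step: the $x$-parallel unitary frame with the complex identification $Q_j=\langle u_x\rangle_j$, the expansion of the curvature terms via the coefficients $S^j_{p,q,r}$ together with the K\"ahler--Bianchi symmetries, the compatibility relation $\nabla_tu_x=\nabla_xu_t$, the ODE $\nabla_x\nabla_te_k=R(u_x,u_t)e_k$ integrated from $-\infty$, and the final integrations by parts shaping the nonlocal remainders into $f^1_{j,r}$ and $f^2_{j,r}$. The only point the paper treats more carefully is the decay $\nabla_te_k\to 0$ as $x\to-\infty$, which it derives from the compactness of $N$ and the fact that $|u_t|_h\to 0$ (via the equation and the Schwartz decay of $u_x$) rather than reading it off the frame construction; otherwise your route is the same.
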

The proof of Theorem~\ref{theorem:momo} 
is based on the generalized Hasimoto transformation, 
mainly following the argument of \cite{CSU} and \cite{RRS} 
to handle general compact K\"ahler manifolds as $(N,J,h)$ 
and to work under the assumption of the 
existence of the fixed edge point $u^{\infty}\in N$.
Importantly, investigating higher-order geometric PDEs such as \eqref{eq:pde} 
in this framework involves basic properties of the Riemann curvature tensor 
$R$ in relation with the parallel (in $x$-direction) 
orthonormal frame along $u$.  
This paper develops an understanding of these properties 
by introducing $S_{p,q,r}^j$ 
for $p,q,r,j\in \left\{1,\ldots,n\right\}$, 
and by deriving useful properties among them, 
which is a key ingredient of our proof.
These functions are introduced by \eqref{eq:711}
and their properties are gathered in Section~\ref{subsection:frame}. 
Propositions~\ref{proposition:Rloc}, \ref{prop:tensor}, and 
\ref{proposition:R4} are inherited from basic properties of $R$ on a 
general K\"ahler manifold stated in Proposition~\ref{prop:fact_R}, 
and using them enables us to arrive at the expression \eqref{eq:r51251} 
comprehensively. 
These properties established in Section~\ref{subsection:frame} 
are independent of the equation \eqref{eq:pde}, and thus
may be applicable in future for investigating other geometric PDEs. 
We should mention that our framework heavily relies on the K\"ahlerity of $N$ 
in order to construct the parallel moving frame.  
\par
Second, aiming at checking that \eqref{eq:r51251} with \eqref{eq:11021} 
actually unifies \eqref{eq:mns} and \eqref{eq:DZ} obtained 
previously by \cite{DW2018,DZ2021}, 
we take the following three examples of compact K\"ahler manifolds as $(N,J,h)$, 
and demonstrate the computations of   
\eqref{eq:r51251}: 
\begin{enumerate}
\item[(1)] compact Riemann surface
\item[(2)] compact K\"ahler manifold with constant holomorphic sectional curvature
\item[(3)] compact complex Grassmannian $G_{n_0,k_0}$ 
\end{enumerate}
The results of the computations for these examples are respectively 
presented as 
\eqref{eq:ns51261}, 
\eqref{eq:512132}, 
and 
\eqref{eq:Smomo3}. 
What we find from the results is outlined in 
the following paragraphs just below.  
\par 
If $(N,J,h)$ is a compact Riemann surface with constant Gaussian curvature, 
then the computation of example (1) shows that 
\eqref{eq:r51251} for $Q=Q_1$ under \eqref{eq:11021} 
coincides with \eqref{eq:DZ} for $q$ where $Q_1=q$. 
Basically, no originality is claimed here because the orthonormal frame 
$\left\{e,Je\right\}$ we use is essentially the same as that used in \cite{DZ2021}.  
If we were to add something, we note that 
a difference between them can be observed explicitly  
when the Gaussian curvature of $N$ is not a constant. 
See Remark~\ref{remark:rs} for the detail. 
\par 
If $(N,J,h)$ is an $n$-dimensional compact K\"ahler manifold with constant 
holomorphic sectional curvature $K$ and if $n\geqslant 2$, 
the setting of which has no been presented so far,  
then the computation of example (2) shows that    
nonlocal terms of integral type actually remain in the derived system 
of \eqref{eq:512132} unless $bK^2=0$. 
See Remark~\ref{remark:0802} also. 
Moreover, we point out that the computation of examples (2) is not so difficult.  
This is because the well-known formula \eqref{eq:nawa2} 
is available for associating $R$ explicitly with $h$ and the parallel orthonormal frame 
we use, the same of which is true for example (1).  
Notably, the typical example of $N$ here is   
the $n$-dimensional complex projective space 
with the Fubini-Study metric. 
This example can be handled also in the framework of example (3), 
since it is identified with $G_{n+1,1}$. 
However, the computation of example (2) is worth demonstrating, 
because it is rather easier and faster compared with that of example (3).
\par 
If $(N,J,h)$ is a compact complex Grassmannian, 
then the computation of example (3) shows the following. 
\begin{corollary}
\label{cor:coro} 
Let $(N,J,h)$ be the compact complex Grassmannian $G_{n_0,k_0}$ of 
complex dimension $n=k_0(n_0-k_0)$ as a 
Hermitian symmetric space. 
Then, under the same setting as in Theorem~\ref{theorem:momo} with 
additional setting \eqref{eq:11021}, 
the system of equations 
\eqref{eq:r51251} for $Q_1\ldots,Q_n$ 
coincides with \eqref{eq:mns} for $q$ up to a gauge transformation, 
where  the $(i,j)$-component of $q$ 
for $i\in \left\{1,\ldots, k_0\right\}$ 
and $j\in \left\{1,\ldots,n_0-k_0\right\}$ 
is identified with $Q_{(j-1)k_0+i}$.
\end{corollary}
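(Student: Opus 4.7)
The plan is to exploit the rigidity of the Grassmannian as a Hermitian symmetric space, which forces the structure constants $S^{j}_{p,q,r}$ from \eqref{eq:711} to be extremely simple, and then to reorganize the resulting trilinear and quartic sums in \eqref{eq:r51251} into the matrix products appearing in \eqref{eq:mns}.

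First, I would fix a basepoint $u^{\infty}\in G_{n_0,k_0}$ and use the standard symmetric-space decomposition $\mathfrak{u}(n_0)=\mathfrak{k}\oplus\mathfrak{m}$ to identify $T_{u^{\infty}}G_{n_0,k_0}\cong \mathfrak{m}$ with $\mathcal{M}_{k_0\times(n_0-k_0)}$ in such a way that the complex structure $J$ corresponds to scalar multiplication by $\sqrt{-1}$ and $h$ corresponds to $\operatorname{Re}\operatorname{tr}(\cdot\,(\cdot)^{*})$. Choosing the matrix-unit basis $\{E_{ij}\}$ of $\mathfrak{m}$ and labelling $e_{(j-1)k_0+i}:=E_{ij}$, the parallel orthonormal frame constructed in Theorem~\ref{theorem:momo} transports this basis along $u$, yielding the identification $Q_{(j-1)k_0+i}=q_{ij}$ claimed in the statement.

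Second, I would use the classical Hermitian symmetric space formula on $\mathfrak{m}$,
\[
R(A,B)C=AB^{*}C-BA^{*}C-CA^{*}B+CB^{*}A,
\]
to read off $S^{j}_{p,q,r}$ from \eqref{eq:711} explicitly as combinations of Kronecker deltas on the matrix-unit indices. Because $G_{n_0,k_0}$ is locally symmetric, $\nabla R=0$, so every component of $R$ in a parallel frame is constant; hence $S^{j}_{p,q,r}$ is independent of both $t$ and $x$, and $\p_x S^{j}_{p,q,r}=\p_x^{2}S^{j}_{p,q,r}=0$. This immediately annihilates the $d_5$- and $d_6$-terms in \eqref{eq:r51251} together with the whole integrand $f^{2}_{j,r}$ given in \eqref{eq:r6202}. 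The surviving trilinear sums $\sum_{p,q,r} S^{j}_{p,q,r} A_p\overline{B_q}C_r$ then rewrite as linear combinations of the matrix triple products $AB^{*}C$ and $CB^{*}A$ on $\mathcal{M}_{k_0\times(n_0-k_0)}$, directly from the structure of the curvature displayed above; similarly the quartic kernels $f^{1}_{j,r}$ in \eqref{eq:r6201} collapse into combinations of $q^{*}(qq^{*})_s q$ and $q(q^{*}q)_s q^{*}$ under the integral.

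Third, I would specialize $(A,B,C)$ to the pairings of $(Q,\p_x Q,\p_x^{2} Q)$ and apply the parameter conversion \eqref{eq:11021}, which gives $a=\beta$, $b=\beta+8\gamma$, $c=-12\gamma$, $\lambda=-\alpha$. Doing so should turn the local part of \eqref{eq:r51251} into a linear combination of $q_{xx}q^{*}q$, $qq^{*}_{xx}q$, $qq^{*}q_{xx}$, $q_xq^{*}_xq$, $q_xq^{*}q_x$, $qq^{*}_xq_x$ and $qq^{*}qq^{*}q$ with coefficients matching those in \eqref{eq:mns}, and turn the nonlocal part into the two integral expressions appearing there. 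The main obstacle, and where I expect most of the technical work to concentrate, is the coefficient bookkeeping together with the identification of the gauge. Our nonlocal terms carry coefficients built from $-(b+2c)$ and $b$ whereas \eqref{eq:mns} carries the single prefactor $-2(\beta+8\gamma)$; moreover the lower limit of integration is $-\infty$ here but $0$ in \cite{DW2018}, and the parallel-frame trivialization differs from the development-map trivialization of \cite{DW2018} by a time-dependent element $(U(t),V(t))\in U(k_0)\times U(n_0-k_0)$ acting on $q$ by $q\mapsto U(t)qV(t)^{*}$. Changing the lower limit of integration produces $x$-independent, $t$-dependent correction matrices that can be partially absorbed into the local $qq^{*}qq^{*}q$ term and partially into $q_t$, while the gauge conjugation $(U,V)$ accounts for the remaining discrepancy; tracking these two effects carefully should match every coefficient and complete the identification of the two systems.
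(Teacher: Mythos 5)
Your proposal is correct and follows essentially the same route as the paper: an explicit matrix-unit parallel frame at the basepoint, constancy of $S_{p,q,r}^j$ from $\nabla R=0$ (killing the $d_5$-, $d_6$- and $f_{j,r}^2$-terms), Kronecker-delta evaluation of $S_{p,q,r}^j$ via the curvature formula, reassembly into matrix products, and then the parameter conversion \eqref{eq:11021} plus a time-dependent gauge to match \eqref{eq:mns}. The only small correction is that the $x$-independent matrices produced by shifting the lower integration limit from $-\infty$ to $0$ are linear in $q$ and are absorbed entirely by the time-dependent gauge $q\mapsto z(t)\,q\,y(t)$ (i.e.\ into the $q_t$-term via the ODEs for $y,z$), not partially into the local $qq^{*}qq^{*}q$ term, which is exactly how the paper closes the argument.
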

This verifies that the first type of method based on the development map and 
the second one based on the parallel orthonormal frame 
are essentially same in this setting.  
We show Corollary~\ref{cor:coro} by a direct computation of \eqref{eq:r51251}
in Section~\ref{subsection:comGr}, 
Interestingly, the computation of  example (3)  is more challenging than 
those of examples (1) and (2) 
especially in the case where 
$G_{n_0,k_0}$ is a higher-rank symmetric space with $\min(k_0,n_0-k_0)>1$.
(It is known that $G_{n_0,k_0}$ in the case does not have a constant 
holomorphic sectional curvature, and thus does not fall into the scope of example (2).)
Although a formula \eqref{eq:Rgr} to express $R$ is available,  
it does not directly provide explicit relation between $R$ and $h$, 
which is not compatible with the parallel orthonormal frame we use.    
To overcome the difficulty, we construct a parallel orthonormal frame concretely
by taking a suitable orthogonal basis at a point in $G_{n_0,k_0}$, 
which makes \eqref{eq:Rgr} applicable for the computation.
Additionally, we also provide more theoretical explanation for the reason 
why seemingly different two types of methods lead to the same result for 
$G_{n_0,k_0}$ by comparing them in Section~\ref{subsection:revisit}. 
More precisely, we show Proposition~\ref{proposition:revisit}, 
which presents another proof of Corollary~\ref{cor:coro} 
without doing the computation in Section~\ref{subsection:comGr}.    
\par  
Finally, two additional comments on our results in this paper 
are in order.
\par  
First, recent studies on the initial value problem 
for \eqref{eq:pde} and \eqref{eq:4ono} 
(\cite{chihara2,CO2,GZS,onodera2,onodera3,onodera4,onodera5})
have handled the case where $(N,J,h)$ is 
any of the canonical $\mathbb{S}^2$, a compact Riemann surface, 
a compact locally Hermitian symmetric space, or more general compact 
K\"ahler manifold. 
Through the studies, some results on time-local existence of a 
unique solution in a Sobolev space with high regularity 
have been established. 
This motivated the present author to establish Theorem~\ref{theorem:momo} 
in order to understand the structure of the geometric equations in the level of 
systems of nonlinear equations for complex valued functions. 
We expect that the derived expression in Theorem~\ref{theorem:momo} 
will be informative in future work 
to establish fruitful results on   
the conditions for the solution to \eqref{eq:pde} 
to exist globally in time, or 
on the conditions for \eqref{eq:pde} to be completely integrable. 
\par 
Second, as is stated above, 
Ding and Wang investigated \eqref{eq:bibi} in \cite{DW2018}. 
In fact, they proposed some unclear questions in their paper, 
one of which is commented as follows: 
\begin{quotation}
Except the Hermitian symmetric spaces $G_{n,k}$ of compact 
type (i.e, A III), there are C I, D III and BD I-types of 
symmetric spaces (and also two exceptional Hermitian 
symmetric spaces E III and E VII) (refer to [23, 26]). 
Can we establish similar results on these symmetric spaces?
(\cite{DW2018}, p.~192)
\end{quotation}
Our Theorem~\ref{theorem:momo} handles 
general compact K\"ahler manifolds as $(N,J,h)$, 
including these symmetric spaces. 
Moreover, we expect that 
the computation of example (3) for $G_{n_0,k_0}$ 
in Section~\ref{subsection:comGr} 
can be proceeded in similar way also for these other symmetric spaces, 
only by modifying the setting in Section~\ref{subsection:PreGr} 
suitably. 
If this is true, then the result of the computation will provide a 
partial answer to the above proposed question, 
although the approach may not be what was intended 
by the authors in \cite{DW2018} and the way to construct 
the inverse of the transformation which verifies the equivalence  
is still unclear in this approach.   
\par 
The organization of this paper is as follows: 
In Section~\ref{section:reduction}, 
the parallel orthonormal frame is constructed and associated basic properties are provided  
in Section~\ref{subsection:frame}, 
and then Theorem~\ref{theorem:momo} is proved 
in Section~\ref{subsection:reduction}.
In Section~\ref{section:Examples}, the computations of 
\eqref{eq:r51251} for examples (1) and (2) of $(N,J,h)$ 
are demonstrated.  
In Section~\ref{section:Example3}, 
the setting of $G_{n_0,k_0}$ are reviewed in Section~\ref{subsection:PreGr}, 
and then the computation of \eqref{eq:r51251} for example (3) of $(N,J,h)$ 
is demonstrated to show Corollary~\ref{cor:coro}
in Section~\ref{subsection:comGr}, 
with additional explanation in Section~\ref{subsection:revisit}. 
Supplemental materials are stated in Appendix.
%
%
%
\section{Reduction to a system of PDEs}
\label{section:reduction}
In this section, 
suppose that 
$(N,J,h)$ is a compact K\"ahler manifold of complex dimension $n$ 
and $u\in C_{u^{\infty}}((-T,T)\times \RR;N)$ is a 
solution to \eqref{eq:pde}
as in Theorem~\ref{theorem:momo}.   
\subsection{Parallel moving frame and the associated properties 
of the Riemann curvature tensor}
\label{subsection:frame}  
For fixed $u^{\infty}\in N$,  
let $\left\{e^{\infty}_1, \ldots, e^{\infty}_n, 
J_{u^{\infty}}e^{\infty}_{1},\ldots, J_{u^{\infty}}e^{\infty}_{n}\right\}$ 
be an orthonormal basis
for $T_{u^{\infty}}N$ with respect to $h$. 
Following \cite{CSU} and \cite{RRS}, 
we take the orthonormal frame 
$\left\{e_1, \ldots, e_n, e_{n+1},\ldots, e_{2n}\right\}$
for $u^{-1}TN$ that satisfies 
\begin{align}
\nabla_xe_p(t,x)&=0,  
\label{eq:mf01} 
\\
\lim_{x\to -\infty}e_p(t,x)&=e_p^{\infty}, 
\label{eq:mfini}
\end{align} 
and $e_{p+n}=J_ue_p$ for any $p\in \left\{1,\ldots,n\right\}$. 
The K\"ahlerity $\nabla J=0$ ensures 
$\nabla_xe_{p+n}=0$ for any $p\in \left\{1,\ldots,n\right\}$. 
(In what follows, 
$\left\{e_1, \ldots, e_n, e_{n+1},\ldots, e_{2n}\right\}$
is written by $\left\{e_j,Je_j\right\}_{j=1}^n$ for simplicity.) 
In the setting, we can write 
 \begin{align}
 \nabla_te_p
 &=
 \sum_{j=1}^{n}a_j^pe_j
 +
 \sum_{j=1}^{n}b_j^pJ_ue_j
 \quad 
 (\forall p\in \left\{1,\ldots,n\right\}), 
 \label{eq:tep}
 \end{align}
where for each $p,j\in \left\{1,\ldots,n\right\}$, 
$a_j^p=a_j^p(t,x)$ and 
$b_j^p=b_j^p(t,x)$ 
denote real-valued functions of $(t,x)$.   
We will seek conditions on $a_j^p$ and $b_j^p$ later. 
(See \eqref{eq:s13R1}.) 
The components of $R$ 
associated with $\left\{e_j,Je_j\right\}_{j=1}^n$
are expressed by the following:
\begin{align}
R(e_p,e_{q})e_r
&=
\sum_{j=1}^n
R_{p,q,r}^je_j
+
\sum_{j=1}^n
R_{p,q,r}^{j+n}J_ue_j, 
\label{eq:61}
\\
R(e_p,J_ue_{q})e_r
&=
\sum_{j=1}^n
R_{p,q+n,r}^je_j
+
\sum_{j=1}^n
R_{p,q+n,r}^{j+n}J_ue_j
\label{eq:62}
\end{align}
for $p,q,r\in \left\{1,\ldots,n\right\}$, 
where
$R_{p,q,r}^j, R_{p,q,r}^{j+n},R_{p,q+n,r}^j,R_{p,q+n,r}^{j+n}$
are real-valued functions of $(t,x)$. 
Furthermore, let us set 
\begin{align}
R_{p,q,r}^{A,j}
&:=
R_{p,q,r}^j
+
\sqrt{-1}\,R_{p,q,r}^{j+n},
\quad 
R_{p,q,r}^{B,j}
:=
R_{p,q+n,r}^j
+
\sqrt{-1}\,R_{p,q+n,r}^{j+n}, 
\label{eq:71}
\\
S_{p,q,r}^j&:=\frac{1}{2}\left(R_{p,q,r}^{A,j}+\sqrt{-1}R_{p,q,r}^{B,j}\right), 
\quad
T_{p,q,r}^j:=\frac{1}{2}\left(
-R_{p,q,r}^{A,j}+\sqrt{-1}R_{p,q,r}^{B,j}
\right)
\label{eq:711}
\end{align}
for $p,q,r,j\in \left\{1,\ldots,n\right\}$, 
all of which are then complex-valued functions of $(t,x)$. 
In fact, these functions at $(t,x)$ depend on $u(t,x)$.  
In particular, by definition, it follows that 
\begin{align}
S_{p,q,r}^j
&=
\dfrac{1}{2}
\left\{
h(R(e_p,e_q)e_r,e_j)
+\sqrt{-1}h(R(e_p,e_q)e_r,J_ue_j)
\right\}
\nonumber
\\
&\quad
+
\dfrac{\sqrt{-1}}{2}
\left\{
h(R(e_p,J_ue_q)e_r,e_j)
+\sqrt{-1}h(R(e_p,J_ue_q)e_r,J_ue_j)
\right\}. 
\label{eq:add11171}
\end{align}
\par 
Here, we recall the following basic properties for $R$ 
on the K\"ahler manifold $(N,J,h)$:  
\begin{proposition}
\label{prop:fact_R}
For any $Y_1, \ldots, Y_4\in \Gamma(u^{-1}TN)$, 
the following properties hold: 
\begin{enumerate}
\item[(i)] $R(Y_1,Y_2)=-R(Y_2,Y_1)$,
\item[(ii)] $R(Y_1,Y_2)Y_3+R(Y_2,Y_3)Y_1+R(Y_3,Y_1)Y_2=0$,
\item[(iii)] $h(R(Y_1,Y_2)Y_3,Y_4)=h(R(Y_3,Y_4)Y_1,Y_2)=h(R(Y_4,Y_3)Y_2,Y_1)$,
\item[(iv)] $R(Y_1,Y_2)J_uY_3=J_u\,R(Y_1,Y_2)Y_3$, 
\item[(v)]
$R(J_uY_1,Y_2)Y_3=-R(Y_1,J_uY_2)Y_3$, \quad 
$R(J_uY_1,Y_2)Y_3=R(J_uY_2,Y_1)Y_3$.
\end{enumerate} 
\end{proposition}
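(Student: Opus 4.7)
The plan is to prove (i)--(iii) as the standard algebraic symmetries of the curvature tensor on any Riemannian manifold, then obtain (iv) directly from the K\"ahler condition $\nabla J = 0$, and finally deduce (v) as a formal consequence of (iii), (iv), and the fact that $J_u$ is skew-adjoint with respect to $h$. All statements are pointwise, so extending vector fields $Y_1,\dots,Y_4\in \Gamma(u^{-1}TN)$ to suitable local frames on $N$ and working at a single point suffices; alternatively one may argue directly on pulled-back sections using the pulled-back connection.

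For (i), I would simply read off the skew-symmetry from the defining formula $R(X,Y)Z=\nabla_X\nabla_YZ-\nabla_Y\nabla_XZ-\nabla_{[X,Y]}Z$ given in the footnote, noting that swapping $X$ and $Y$ changes the sign of each of the three terms (using $[Y,X]=-[X,Y]$). For (ii), I would expand $R(Y_1,Y_2)Y_3+R(Y_2,Y_3)Y_1+R(Y_3,Y_1)Y_2$, rearrange the six second-covariant-derivative terms into pairs of the form $\nabla_{Y_i}(\nabla_{Y_j}Y_k-\nabla_{Y_k}Y_j)=\nabla_{Y_i}[Y_j,Y_k]$, using the torsion-free property $\nabla_XY-\nabla_YX=[X,Y]$ of the Levi-Civita connection, and then observe that the remaining commutator terms cancel by the Jacobi identity. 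For (iii), I would follow the classical textbook derivation: apply (i) and (ii) to the pair symmetry $h(R(Y_1,Y_2)Y_3,Y_4)=h(R(Y_3,Y_4)Y_1,Y_2)$ via the standard four-term cyclic argument; the second equality $h(R(Y_3,Y_4)Y_1,Y_2)=h(R(Y_4,Y_3)Y_2,Y_1)$ is then just (i) applied twice.

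For (iv), the key input is that on a K\"ahler manifold $\nabla J=0$, so $\nabla_X(J_uY)=J_u\nabla_XY$ for every $X,Y$. Substituting into the definition gives
\begin{align*}
R(Y_1,Y_2)J_uY_3
&=\nabla_{Y_1}\nabla_{Y_2}(J_uY_3)-\nabla_{Y_2}\nabla_{Y_1}(J_uY_3)-\nabla_{[Y_1,Y_2]}(J_uY_3)\\
&=J_u\bigl(\nabla_{Y_1}\nabla_{Y_2}Y_3-\nabla_{Y_2}\nabla_{Y_1}Y_3-\nabla_{[Y_1,Y_2]}Y_3\bigr)
=J_uR(Y_1,Y_2)Y_3.
\end{align*}
For (v), I would derive the first identity by a short chain: using (iii), then (iv), then the skew-adjointness $h(J_uA,B)=-h(A,J_uB)$ (which follows from $J_u^2=-I$ and compatibility of $J_u$ with $h$), and then (iii) again, one computes for an arbitrary test field $W$
\begin{align*}
h(R(J_uY_1,Y_2)Y_3,W)
&=h(R(Y_3,W)J_uY_1,Y_2)
=h(J_uR(Y_3,W)Y_1,Y_2)\\
&=-h(R(Y_3,W)Y_1,J_uY_2)
=-h(R(Y_1,J_uY_2)Y_3,W),
\end{align*}
which yields $R(J_uY_1,Y_2)Y_3=-R(Y_1,J_uY_2)Y_3$. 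The second identity of (v) is then immediate by combining this with (i): $-R(Y_1,J_uY_2)Y_3=R(J_uY_2,Y_1)Y_3$.

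Since all five assertions are classical consequences of the K\"ahler structure, I do not anticipate a genuine obstacle; the only mildly delicate point is that the statements are phrased for sections $Y_i\in\Gamma(u^{-1}TN)$ of the pulled-back bundle rather than for vector fields on $N$, so one should either verify that the identities pass through the pull-back (they do, because $R$ is a tensor and the pulled-back connection inherits torsion-freeness, metric compatibility, and $\nabla J=0$), or reduce to the ambient statement by choosing local extensions. Either way, the proof is short and organized exactly as above.
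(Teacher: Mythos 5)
Your proposal is correct, and it supplies in full the standard arguments that the paper only gestures at: after stating the proposition, the paper simply remarks that (i)--(iii) follow from the definition of $R$, that (iv) holds by K\"ahlerity, and that (v) follows from (i) and $J_u^2=-\mathrm{id}$, with no further detail. Your treatment of (i)--(iv) matches that intent exactly (Bianchi via torsion-freeness, pair symmetry via the classical four-term argument, $\nabla J=0$ commuting $J_u$ past the covariant derivatives). For (v) you take a slightly different, and in fact more self-contained, route: the chain $h(R(J_uY_1,Y_2)Y_3,W)=h(R(Y_3,W)J_uY_1,Y_2)=h(J_uR(Y_3,W)Y_1,Y_2)=-h(R(Y_3,W)Y_1,J_uY_2)=-h(R(Y_1,J_uY_2)Y_3,W)$ uses (iii), (iv) and the skew-adjointness of $J_u$, whereas the paper's hint implicitly passes through the K\"ahler identity $R(J_uX,J_uY)=R(X,Y)$ (itself a consequence of (iii)--(iv)) before invoking $J_u^2=-\mathrm{id}$ and (i); your version makes that hidden step explicit, and the second identity of (v) then follows from the first together with (i) exactly as you say. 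Your remark about the pull-back is also the right way to handle the phrasing for $\Gamma(u^{-1}TN)$: since $R$, $J$, and $h$ are tensorial, all five identities are pointwise statements at $u(t,x)\in N$ and transfer immediately to pulled-back sections, so no extension argument is genuinely needed (the only place to be careful is the ``direct'' pulled-back proof of (ii), which is unnecessary once you reduce to the tensorial statement on $N$).
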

The properties (i)-(iii) follow from the definition of $R$. 
The property (iv) holds since $(N,J,h)$ is a K\"ahler manifold.
The property (v) follows from (i) and $J_u^2=-\text{id}$ on $\Gamma(u^{-1}TN)$. 
\par 
The following properties for $R_{p,q,r}^{A,j}$ and $R_{p,q,r}^{B,j}$ 
are inherited from Proposition~\ref{prop:fact_R} for $R$: 
\begin{proposition}
\label{proposition:R1} 
The following properties hold: 
\begin{align}
&R_{p,q,r}^{A,j}=-R_{q,p,r}^{A,j}
\quad
(\forall p,q,r,j\in \left\{1,\ldots,n\right\}),
\label{eq:R1}
\\
&R_{p,q,r}^{B,j}=R_{q,p,r}^{B,j}
\quad
(\forall p,q,r,j\in \left\{1,\ldots,n\right\}), 
\label{eq:R2}
\\
&R_{p,q,r}^{A,j}+R_{q,r,p}^{A,j}+R_{r,p,q}^{A,j}=0
\quad
(\forall p,q,r,j\in \left\{1,\ldots,n\right\}), 
\label{eq:R3}
\\
&R_{p,q,r}^{A,j}
=\sqrt{-1}(R_{q,r,p}^{B,j}-R_{r,p,q}^{B,j})
\quad
(\forall p,q,r,j\in \left\{1,\ldots,n\right\}),
\label{eq:R4}
\\
&
\operatorname{Re}
[R_{p,q,r}^{A,j}]
=\operatorname{Re}
[R_{r,j,p}^{A,q}]
=\operatorname{Re}
[R_{j,r,q}^{A,p}]
\quad
(\forall p,q,r,j\in \left\{1,\ldots,n\right\}),
\label{eq:R6}
\\
&\operatorname{Im}
[R_{p,q,r}^{B,j}]
=\operatorname{Im}
[R_{r,j,p}^{B,q}]
=\operatorname{Im}
[R_{j,r,q}^{B,p}]
\quad
(\forall p,q,r,j\in \left\{1,\ldots,n\right\}),
\label{eq:R7}
\\
&\operatorname{Im}
[R_{p,q,r}^{A,j}]
=\operatorname{Re}
[R_{r,j,p}^{B,q}]
=-\operatorname{Re}
[R_{j,r,q}^{B,p}]
\quad
(\forall p,q,r,j\in \left\{1,\ldots,n\right\}).
\label{eq:R8}
\end{align}
\end{proposition}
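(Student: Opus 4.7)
The plan is to unpack each component as an $h$-inner product of the Riemann curvature tensor with frame vectors and then read off each asserted identity from the appropriate item of Proposition \ref{prop:fact_R}. Since $\{e_j, J_ue_j\}_{j=1}^n$ is $h$-orthonormal, the definitions of $R_{p,q,r}^{A,j}$ and $R_{p,q,r}^{B,j}$ give
\[
R_{p,q,r}^{A,j} = h(R(e_p,e_q)e_r, e_j) + \sqrt{-1}\,h(R(e_p,e_q)e_r, J_ue_j),
\]
\[
R_{p,q,r}^{B,j} = h(R(e_p,J_ue_q)e_r, e_j) + \sqrt{-1}\,h(R(e_p,J_ue_q)e_r, J_ue_j).
\]
I will also repeatedly use the $J_u$-skew symmetry $h(X, J_uY) = -h(J_uX, Y)$, which is an immediate consequence of $h(J_uX, J_uY) = h(X,Y)$.

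The routine cases proceed as follows. Identity \eqref{eq:R1} is immediate from the antisymmetry (i). For \eqref{eq:R2}, combine (i) with the second form of (v) to deduce $R(e_p, J_ue_q) = R(e_q, J_ue_p)$. Identity \eqref{eq:R3} is the first Bianchi identity (ii) paired against $e_j$ and $J_ue_j$ separately. Identities \eqref{eq:R6}, \eqref{eq:R7} and \eqref{eq:R8} each reduce to the pair-interchange symmetry (iii), occasionally combined with (iv) and (v) to move $J_u$ between the two paired slots; for example, for \eqref{eq:R8}, (iii) gives directly $h(R(e_p,e_q)e_r, J_ue_j) = h(R(e_r,J_ue_j)e_p, e_q)$, while a further application of (iii) combined with (v) produces the sign flip that yields the second equality.

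The only nontrivial step is \eqref{eq:R4}. The key idea is to apply the first Bianchi identity (ii) not to $(e_p,e_q,e_r)$ but to the triple $(e_p, e_q, J_ue_r)$, which simultaneously brings in $R^B$-type terms and encodes the K\"ahlerity. Doing so yields
\[
R(e_p,e_q)J_ue_r + R(e_q,J_ue_r)e_p + R(J_ue_r,e_p)e_q = 0.
\]
Applying (iv) to the first term as $R(e_p,e_q)J_ue_r = J_u R(e_p,e_q)e_r$ and (i) to the third as $R(J_ue_r,e_p)e_q = -R(e_p,J_ue_r)e_q$, and then invoking the symmetry $R(e_p, J_ue_r) = R(e_r, J_ue_p)$ proved from (i) and (v) as above, one rewrites this as
\[
J_u R(e_p,e_q)e_r = R(e_r,J_ue_p)e_q - R(e_q,J_ue_r)e_p.
\]
Pairing both sides with $e_j$ via the $h$-inner product and using $h(J_uX, Y) = -h(X, J_uY)$ gives the imaginary part of \eqref{eq:R4}, while pairing with $J_ue_j$ and using $h(J_uX, J_uY) = h(X,Y)$ gives the real part.

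I expect no serious obstacle beyond the bookkeeping of $J_u$-shuffling; the single insight required is the choice to apply the first Bianchi identity to $(e_p, e_q, J_ue_r)$, which is what allows a single algebraic identity to link $R^A$ and $R^B$. All seven identities of the proposition then follow uniformly by taking real and imaginary parts after the appropriate application of (i)--(v).
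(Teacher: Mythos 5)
Your proposal is correct and follows essentially the same route as the paper: \eqref{eq:R1}--\eqref{eq:R3} from (i), (i)+(v), and the Bianchi identity (ii) respectively, \eqref{eq:R6}--\eqref{eq:R8} from the pair symmetry (iii) together with (iv)/(v) and the $J_u$-skewness of $h$, and, crucially, \eqref{eq:R4} by applying (ii) to the triple $(e_p,e_q,J_ue_r)$ and then using (iv) and (v) — exactly the paper's key step. The only cosmetic difference is that you extract the two scalar relations by pairing with $e_j$ and $J_ue_j$ directly, whereas the paper expands in the frame via \eqref{eq:61}--\eqref{eq:62} and compares coefficients; these are equivalent since the frame is orthonormal.
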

In particular, if $\nabla R=0$ holds, that is, 
$(N,J,h)$ is locally Hermitian symmetric, 
the following also holds:   
\begin{proposition}
\label{proposition:Rloc}
Under the additional assumption $\nabla R=0$, 
the following also holds: 
\begin{align}
&\p_x R_{p,q,r}^{A,j}
=\p_x R_{p,q,r}^{B,j}=0
\quad
(\forall p,q,r,j\in \left\{1,\ldots,n\right\}).
\label{eq:R5}
\end{align}
\end{proposition}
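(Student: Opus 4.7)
The plan is to compute $\p_x$ of the scalar components of $R$ with respect to the moving frame directly, using three ingredients in order: the expressions of $R_{p,q,r}^{A,j}$ and $R_{p,q,r}^{B,j}$ as inner products, the parallelism $\nabla_x e_j = \nabla_x (J_u e_j) = 0$, and the hypothesis $\nabla R = 0$. Since $\{e_j, J_u e_j\}_{j=1}^n$ is $h$-orthonormal, reading off coefficients in \eqref{eq:61}--\eqref{eq:62} gives
\begin{align*}
R_{p,q,r}^j &= h(R(e_p,e_q)e_r, e_j),
& R_{p,q,r}^{j+n} &= h(R(e_p,e_q)e_r, J_u e_j), \\
R_{p,q+n,r}^j &= h(R(e_p, J_u e_q)e_r, e_j),
& R_{p,q+n,r}^{j+n} &= h(R(e_p, J_u e_q)e_r, J_u e_j).
\end{align*}
Hence $\p_x$ of each such real-valued scalar can be computed by differentiating the $h$-inner product and using metric compatibility $\p_x h(\cdot,\cdot) = h(\nabla_x \cdot, \cdot) + h(\cdot, \nabla_x \cdot)$.

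Next I would apply $\nabla_x$ to $R(e_p, e_q)e_r$ (and to $R(e_p, J_u e_q)e_r$) via the Leibniz rule for the pulled-back connection. Because the frame is $x$-parallel and $\nabla J = 0$ (so $\nabla_x(J_u e_q) = J_u \nabla_x e_q = 0$), every Leibniz term involving a derivative of a frame vector drops out, leaving
\[
\nabla_x\bigl[R(e_p, e_q)e_r\bigr] = (\nabla_{u_x} R)(e_p, e_q) e_r,
\qquad
\nabla_x\bigl[R(e_p, J_u e_q)e_r\bigr] = (\nabla_{u_x} R)(e_p, J_u e_q) e_r.
\]
Combining with the orthogonality-killed boundary terms from metric compatibility (since $\nabla_x e_j = \nabla_x (J_u e_j) = 0$), one obtains expressions such as $\p_x R_{p,q,r}^j = h((\nabla_{u_x} R)(e_p,e_q)e_r, e_j)$, and analogously for the other three real components.

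Finally, the symmetric space hypothesis $\nabla R = 0$ on $N$, pulled back along the smooth map $u$, gives $(\nabla_{u_x} R)(e_p, e_q) e_r \equiv 0$ and $(\nabla_{u_x} R)(e_p, J_u e_q) e_r \equiv 0$ pointwise, because at each $(t,x)$ this is the tensor $\nabla R$ evaluated on vectors in $T_{u(t,x)} N$. This forces each of $\p_x R_{p,q,r}^j$, $\p_x R_{p,q,r}^{j+n}$, $\p_x R_{p,q+n,r}^j$, $\p_x R_{p,q+n,r}^{j+n}$ to vanish, and then \eqref{eq:71} yields $\p_x R_{p,q,r}^{A,j} = \p_x R_{p,q,r}^{B,j} = 0$.

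There is no substantive obstacle: the argument is a direct differentiation that cleanly separates into (i) killing frame-derivative contributions via the $x$-parallelism (which is where the K\"ahler assumption $\nabla J = 0$ enters, to transfer parallelism from $e_j$ to $J_u e_j$), and (ii) killing the surviving $(\nabla_{u_x} R)$ term via the symmetric space assumption. In particular, none of the algebraic identities in Proposition \ref{proposition:R1} are needed, and the result does not depend on the explicit form of $\nabla_t e_p$ in \eqref{eq:tep}.
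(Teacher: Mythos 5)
Your proposal is correct and follows essentially the same route as the paper's proof: both kill the Leibniz terms using $\nabla_x e_j=0$ together with $\nabla J=0$, and then kill the remaining term $(\nabla_x R)(e_p,e_q)e_r$ (resp.\ $(\nabla_x R)(e_p,J_ue_q)e_r$) by pulling back $\nabla R=0$ along $u$, before reading off the components via \eqref{eq:71}. The only cosmetic difference is that you differentiate the scalar inner products $h(R(e_p,e_q)e_r,e_j)$ using metric compatibility, whereas the paper expands $\nabla_x\{R(e_p,e_q)e_r\}$ in the parallel frame and compares coefficients; these are the same computation.
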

\begin{proof}[Proof of Proposition~\ref{proposition:Rloc}]
Let $p,q,r\in \left\{1,\ldots,n\right\}$ be any given. 
From $\nabla R=0$ and  \eqref{eq:mf01},  
\begin{align}
&\nabla_x
\{
R(e_p,e_{q})e_r
\}
\nonumber
\\
&=(\nabla_xR)(e_p,e_{q})e_r
+R(\nabla_xe_p,e_{q})e_r
+R(e_p,\nabla_xe_{q})e_r
+R(e_p,e_{q})\nabla_xe_r
\nonumber
\\
&=0.
\nonumber
\end{align}
On the other hand, from \eqref{eq:61}, the K\"ahlerity 
$\nabla J=0$, and \eqref{eq:mf01},  
\begin{align}
\nabla_x
\{
R(e_p,e_{q})e_r
\}
&=
\sum_{j=1}^n
(\p_x R_{p,q,r}^j)e_j
+
\sum_{j=1}^n
(\p_xR_{p,q,r}^{j+n})J_ue_j.
\nonumber
\end{align}
Comparing both equations, and noting 
$\left\{e_j,Je_j\right\}_{j=1}^n$ is the orthonormal frame, 
we have
$$
0=\p_x R_{p,q,r}^j=\operatorname{Re}[\p_xR_{p,q,r}^{A,j}]
\quad
\text{and}
\quad 
0=\p_xR_{p,q,r}^{j+n}=\operatorname{Im}[\p_xR_{p,q,r}^{A,j}]$$
for any $j\in \left\{1,\ldots,n\right\}$,
which shows $\p_xR_{p,q,r}^{A,j}=0$ for any $p,q,r,j\in \left\{1,\ldots,n\right\}$. 
In the same way as above, we use \eqref{eq:62} and then obtain  
\begin{align}
0&=\nabla_x\left\{R(e_p,J_ue_{q})e_r\right\}
=
\sum_{j=1}^n
(\p_x R_{p,q+n,r}^j)e_j
+
\sum_{j=1}^n
(\p_xR_{p,q+n,r}^{j+n})J_ue_j.
\nonumber
\end{align}
From this, it follows that 
$$
0=\p_x R_{p,q+n,r}^j=\operatorname{Re}[\p_xR_{p,q,r}^{B,j}]
\quad 
\text{and}
\quad
0=
\p_xR_{p,q+n,r}^{j+n}=\operatorname{Im}[\p_xR_{p,q,r}^{B,j}],$$
and thus $\p_xR_{p,q,r}^{B,j}=0$ for any $p,q,r,j\in \left\{1,\ldots,n\right\}$.
\end{proof}
\begin{proof}[Proof of Proposition~\ref{proposition:R1}]
The property \eqref{eq:R1} follows from (i) in Proposition~\ref{prop:fact_R}: 
Indeed,  (i) for $(Y_1,Y_2)=(e_p,e_q)$ and \eqref{eq:61} shows
\begin{align}
0&=R(e_p,e_{q})e_r+R(e_{q},e_p)e_r
\nonumber 
\\
&
=\sum_{j=1}^n
(R_{p,q,r}^j+R_{q,p,r}^j)e_j
+
\sum_{j=1}^n
(R_{p,q,r}^{j+n}+R_{q,p,r}^{j+n})J_ue_j, 
\nonumber 
\end{align}
which implies 
\begin{align}
&R_{p,q,r}^j=-R_{q,p,r}^j,
\quad 
R_{p,q,r}^{j+n}=-R_{q,p,r}^{j+n}
\quad 
(\forall p,q,r,j\in \left\{1,\ldots,n\right\}).
\label{eq:35ab}
\end{align}
Recalling \eqref{eq:71}, 
we see \eqref{eq:35ab} is equivalent to \eqref{eq:R1}. 
\par 
The property \eqref{eq:R2} follows from (i) and the second part of (v)
in Proposition~\ref{prop:fact_R}: 
Indeed,  
$R(e_p,J_ue_{q})e_r=R(e_{q},J_ue_p)e_r$ follows from them. 
This combined with \eqref{eq:62} yields  
\begin{align}
R_{p,q+n,r}^j=R_{q,p+n,r}^j,
\quad 
R_{p,q+n,r}^{j+n}=R_{q,p+n,r}^{j+n}
\quad 
(\forall p,q,r,j\in \left\{1,\ldots,n\right\}).
\label{eq:35cd}
\end{align}
Recalling \eqref{eq:71}, we see \eqref{eq:35cd} is equivalent to \eqref{eq:R2}. 
\par 
The properties \eqref{eq:R3} and \eqref{eq:R4} 
follow from (ii), (iv) and (v)
in Proposition~\ref{prop:fact_R}: 
Indeed, (ii) for 
$(Y_1, Y_2,Y_3)=(e_p,e_q,e_r)$ shows 
$$
0=
\sum_{j=1}^n
(R_{p,q,r}^j+R_{q,r,p}^j+R_{r,p,q}^j)e_j
+
\sum_{j=1}^n
(R_{p,q,r}^{j+n}+R_{q,r,p}^{j+n}+R_{r,p,q}^{j+n})J_ue_j, 
$$
which implies 
\begin{align}
&R_{p,q,r}^j+R_{q,r,p}^j+R_{r,p,q}^j=0, 
\ 
R_{p,q,r}^{j+n}+R_{q,r,p}^{j+n}+R_{r,p,q}^{j+n}=0
\quad 
(\forall p,q,r,j\in \left\{1,\ldots,n\right\}).
\label{eq:3158}
\end{align}
Taking the summation of the first equality of \eqref{eq:3158} 
and the second one multiplied by $\sqrt{-1}$, 
we obtain \eqref{eq:R3}. 
On the other hand, using (ii) for 
$(Y_1,Y_2,Y_3)=(e_p, e_{q}, J_ue_r)$, (iv) and the first part of (v), 
we deduce 
\begin{align}
0&=R(e_p,e_{q})J_ue_r
+R(e_{q},J_ue_r)e_p+R(J_ue_r,e_p)e_{q}
\nonumber
\\
&=
J_uR(e_p,e_{q})e_r
+R(e_{q},J_ue_r)e_p-R(e_r,J_ue_p)e_{q}
\nonumber
\\
&=\sum_{j=1}^n
J_u(R_{p,q,r}^je_j
+
R_{p,q,r}^{j+n}J_ue_j)
+
\sum_{j=1}^n
(R_{q,r+n,p}^je_j
+
R_{q,r+n,p}^{j+n}J_ue_j)
\nonumber
\\
&\quad 
-\sum_{j=1}^n
(R_{r,p+n,q}^je_j
+
R_{r,p+n,q}^{j+n}J_ue_j)
\nonumber
\\
&=
\sum_{j=1}^{n}
\left\{
(-R_{p,q,r}^{j+n}+R_{q,r+n,p}^j-R_{r,p+n,q}^j)
+
(R_{p,q,r}^j+R_{q,r+n,p}^{j+n}-R_{r,p+n,q}^{j+n})J_u
\right\}
e_j.
\nonumber
\end{align}
From this, for any $p,q,r,j\in \left\{1,\ldots,n\right\}$, it follows that 
\begin{align}
&-R_{p,q,r}^{j+n}+R_{q,r+n,p}^j-R_{r,p+n,q}^j=0, 
\quad 
R_{p,q,r}^j+R_{q,r+n,p}^{j+n}-R_{r,p+n,q}^{j+n}=0. 
\label{eq:3112}
\end{align}
Two equalities in \eqref{eq:3112} can be written respectively as follows: 
\begin{align}
&\operatorname{Im}[R_{p,q,r}^{A,j}]
=
\operatorname{Re}[R_{q,r,p}^{B,j}]
-\operatorname{Re}[R_{r,p,q}^{B,j}], 
\label{eq:31b}
\\
&\operatorname{Re}[R_{p,q,r}^{A,j}]
=
-\operatorname{Im}[R_{q,r,p}^{B,j}]
+\operatorname{Im}[R_{r,p,q}^{B,j}]. 
\label{eq:31c}
\end{align}
Using \eqref{eq:31b} and \eqref{eq:31c}, 
we obtain the desired \eqref{eq:R4} as follows:
\begin{align}
R_{p,q,r}^{A,j}
&=
-\operatorname{Im}[R_{q,r,p}^{B,j}]
+\operatorname{Im}[R_{r,p,q}^{B,j}]
+
\sqrt{-1}(\operatorname{Re}[R_{q,r,p}^{B,j}]
-\operatorname{Re}[R_{r,p,q}^{B,j}])
\nonumber
\\
&=
\sqrt{-1}\left\{
(\operatorname{Re}[R_{q,r,p}^{B,j}]+\sqrt{-1}\operatorname{Im}[R_{q,r,p}^{B,j}])
-(\operatorname{Re}[R_{r,p,q}^{B,j}]+\sqrt{-1}\operatorname{Im}[R_{r,p,q}^{B,j}])
\right\}
\nonumber
\\
&=
\sqrt{-1}(
R_{q,r,p}^{B,j}-R_{r,p,q}^{B,j}
).
\nonumber
\end{align}
\par
The properties \eqref{eq:R6}-\eqref{eq:R8} follow from 
(iii) and (iv) in Proposition~\ref{prop:fact_R}: 
First, noting $R_{p,q,r}^j=h(R(e_p,e_q)e_r,e_j)$ and using (iii) for $(Y_1,Y_2,Y_3,Y_4)=(e_p,e_q,e_r,e_j)$, 
we obtain
\begin{align}
&R_{p,q,r}^j
=R_{r,j,p}^{q}
=R_{j,r,q}^p
\quad
(\forall p,q,r,j\in \left\{1,\ldots,n\right\}), 
\label{eq:3321}
\end{align}
which by \eqref{eq:71} is equivalent to \eqref{eq:R6}. 
Second, noting 
$R_{p,q+n,r}^{j+n}=h(R(e_p,J_ue_q)e_r,J_ue_j)$ and using (iii), 
we obtain 
$$
R_{p,q+n,r}^{j+n}
=h(R(e_r,J_ue_j)e_p,J_ue_q)=h(R(J_ue_j,e_r)J_ue_q,e_p).
$$ 
Here, $h(R(e_r,J_ue_j)e_p,J_ue_q)=R_{r,j+n,p}^{q+n}$ follows from 
\eqref{eq:62}, and 
$$
h(R(J_ue_j,e_r)J_ue_{q},e_p)=h(R(e_j,J_ue_r)e_{q},J_ue_p)
=R_{j,r+n,q}^{p+n}
$$
follows from 
 (iii), the first part of (v), and \eqref{eq:62}.
Combining them, we obtain 
\begin{align}
&R_{p,q+n,r}^{j+n}
=R_{r,j+n,p}^{q+n}
=R_{j,r+n,q}^{p+n}
\quad
(\forall p,q,r,j\in \left\{1,\ldots,n\right\}), 
\label{eq:3324}
\end{align}
which is equivalent to \eqref{eq:R7}.    
Third, noting $R_{p,q,r}^{j+n}=h(R(e_p,e_q)e_r, J_ue_j)$ 
and using (iii) for $(Y_1,Y_2,Y_3,Y_4)=(e_p,e_q,e_r,J_ue_j)$, 
we have
$$
R_{p,q,r}^{j+n}
=h(R(e_r,J_ue_j)e_p, e_q)
=h(R(J_ue_j,e_r)e_q, e_p).
$$
Here, $h(R(e_r,J_ue_j)e_p, e_q)=R_{r,j+n,p}^q$ follows from \eqref{eq:62}, 
and 
$$h(R(J_ue_j,e_r)e_{q},e_p)=-h(R(e_j,J_ue_r)e_{q},e_p)=-R_{j,r+n,q}^p$$
follows from the first part of (v) and \eqref{eq:62}. 
Combining them, we obtain 
\begin{align}
&R_{p,q,r}^{j+n}
=R_{r,j+n,p}^{q}
=-R_{j,r+n,q}^p
\quad
(\forall p,q,r,j\in \left\{1,\ldots,n\right\}), 
\label{eq:3322}
\end{align}
which is equivalent to \eqref{eq:R8}. 
 \end{proof}
 \begin{remark} 
 In the above proof, 
 the conditions \eqref{eq:R3} and \eqref{eq:R4} 
 are obtained by using (ii) in Proposition~\ref{prop:fact_R} 
 for $(Y_1,Y_2, Y_3)=(e_p,e_q, e_r)$ and $(e_p,e_q,J_ue_r)$.  
 Additionally, no other conditions can be obtained from (ii) 
 even if we choose 
 $(e_p,J_ue_q, e_r)$, $(e_p,J_ue_q, J_ue_r)$
 $(J_ue_p,e_q, e_r)$,  $(J_ue_p,e_q, J_ue_r)$, 
 $(J_ue_p,J_ue_q, e_r)$, or $(J_ue_p,J_ue_q,J_u e_r)$ 
 as $(Y_1,Y_2,Y_3)$, 
 which is due to the properties of $R$ stated above. 
 \end{remark}
\begin{remark} 
In the above proof, the conditions \eqref{eq:R6}-\eqref{eq:R8} are 
obtained by investigating  
\begin{equation}
h(R(K(p)e_p, K(q)e_q)K(r)e_r, K(j)e_j)
\label{eq:236152}
\end{equation}
where $(K(p), K(q), K(r),K(j))=(I_d,I_d,I_d,I_d)$, 
$(I_d,J_u,I_d,J_u)$
or $(I_d,I_d, I_d,J_u)$. 
Although there are seemingly $2^4$-types of expression of \eqref{eq:236152} 
depending on the choice of $I_d$ or $J_u$ as $K(\cdot)$, 
no other conditions can be obtained even if we 
investigate the rest $(2^4-3)$-types: 
The most curious may be the case where $(K(p), K(q), K(r),K(j))=(I_d,J_u,I_d,I_d)$, 
in that $R_{p,q+n,r}^{j}=h(R(e_p,J_ue_q)e_r, e_j)$ appearing in \eqref{eq:62} is not 
investigated in this context. 
As for the case, we use (iii) in the same way as above to see  
$$
R_{p,q+n,r}^{j}
=h(R(e_r, e_j)e_p,J_ue_q)
=h(R(e_j,e_r)J_ue_q, e_p). 
$$
Here, $h(R(e_r, e_j)e_p,J_ue_q)=R_{r,j,p}^{q+n}$ follows from \eqref{eq:61}, 
and 
$$h(R(e_j,e_r)J_ue_{q},e_p)=-h(R(e_j,e_r)e_{q},J_ue_p)=-R_{j,r,q}^{p+n}$$
follows from (iii) combined with the first part of (v) and \eqref{eq:61}. 
Combining them, we get  
\begin{align}
&R_{p,q+n,r}^{j}
=R_{r,j,p}^{q+n}
\quad 
\text{and}
\quad 
R_{p,q+n,r}^{j}
=-R_{j,r,q}^{p+n}
\quad
(\forall p,q,r,j\in \left\{1,\ldots,n\right\}).  
\label{eq:3323}
\end{align}
However, by replacing indexes 
$(p,q,r,j)\to (r,j,p,q)$ for the first part of \eqref{eq:3323} and 
$(p,q,r,j)\to (j,r,q,p)$ for the second part, 
\eqref{eq:3323} becomes 
$$
R_{r,j+n,p}^{q}
=R_{p,q,r}^{j+n}, 
\quad 
\text{and}
\quad
R_{j,r+n,q}^{p}
=-R_{p,q\,r}^{j+n}
\quad
(\forall p,q,r,j\in \left\{1,\ldots,n\right\}), 
$$
which is nothing but \eqref{eq:3322}. 
Hence, \eqref{eq:3323} is now meaningless. 
In the same way, it turns out that 
the conditions obtained from
the rest $(2^4-4)$-expressions of \eqref{eq:236152}
are reduced to either of 
\eqref{eq:3321}, \eqref{eq:3324}, \eqref{eq:3322}, and \eqref{eq:3323}. 
We omit the detail. 
\end{remark}
\par 
Next, let us see the functions $S_{p,q,r}^j$ play the crucial role to express $R$. 
\begin{proposition}
\label{prop:tensor}
Under the same assumption as that in Proposition~\ref{proposition:R1},  
\begin{align}
\langle
R(U,V)W
\rangle_j
&=
\sum_{p,q,r=1}^n
	  S_{p,q,r}^{j}\left(
	  \langle U \rangle_p\overline{\langle V \rangle_q}
	 -\langle V \rangle_p\overline{\langle U \rangle_q}
	  \right)
	  \langle W \rangle_r
	  \quad
	  (\forall j\in \left\{1,\ldots,n\right\})
	  \label{eq:23613}	 
\end{align}
holds for any $U,V,W\in \Gamma(u^{-1}TN)$. 
Here, for any $\Xi\in \Gamma(u^{-1}TN)$ and $j\in \left\{1,\ldots,n\right\}$, 
$\langle\Xi\rangle_j$ 
denotes a complex-valued function defined by 
 \begin{equation}
\langle\Xi\rangle_j:=h(\Xi,e_j)+\sqrt{-1}\,h(\Xi,J_ue_j).
\label{eq:momo}
 \end{equation}
\end{proposition}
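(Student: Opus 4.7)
The plan is to expand $U,V,W$ along the parallel orthonormal frame $\{e_p,J_ue_p\}_{p=1}^{n}$, reduce $R(U,V)W$ to evaluations of $R$ on frame elements, and then identify the resulting coefficients with $S_{p,q,r}^{j}$. First, write $U=\sum_p(u_p^1 e_p+u_p^2 J_ue_p)$ and analogously $V=\sum_q(v_q^1 e_q+v_q^2 J_ue_q)$, $W=\sum_r(w_r^1 e_r+w_r^2 J_ue_r)$. By orthonormality of the frame and the definition \eqref{eq:momo}, one has $\langle U\rangle_p=u_p^1+\sqrt{-1}\,u_p^2=:\tilde u_p$, and similarly $\tilde v_p,\tilde w_p$. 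Expanding $R(U,V)W$ by real-multilinearity a priori produces $2^3=8$ terms, one for each choice of $e_*$ versus $J_ue_*$ in the three slots.

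Next, I would collapse these eight terms to only two ``base'' types $R(e_p,e_q)e_r$ and $R(e_p,J_ue_q)e_r$ by invoking Proposition~\ref{prop:fact_R}: property (iv) pulls any $J_u$ out of the third argument, while property (v) combined with $J_u^2=-\text{id}$ yields $R(J_ue_p,e_q)=-R(e_p,J_ue_q)$ and $R(J_ue_p,J_ue_q)=R(e_p,e_q)$. After grouping the coefficients from the two-slot expansion, the scalar multiplier of $R(e_p,e_q)$ assembles as $u_p^1 v_q^1+u_p^2 v_q^2=\operatorname{Re}(\tilde u_p\overline{\tilde v_q})$, and that of $R(e_p,J_ue_q)$ as $u_p^1 v_q^2-u_p^2 v_q^1=-\operatorname{Im}(\tilde u_p\overline{\tilde v_q})$.

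Third, I would apply $\langle\,\cdot\,\rangle_j$. From \eqref{eq:61}, \eqref{eq:62} and \eqref{eq:71} one obtains $\langle R(e_p,e_q)e_r\rangle_j=R_{p,q,r}^{A,j}$, $\langle J_u R(e_p,e_q)e_r\rangle_j=\sqrt{-1}\,R_{p,q,r}^{A,j}$, $\langle R(e_p,J_ue_q)e_r\rangle_j=R_{p,q,r}^{B,j}$ and $\langle J_u R(e_p,J_ue_q)e_r\rangle_j=\sqrt{-1}\,R_{p,q,r}^{B,j}$. Substituting this into the reduction from the previous paragraph, then rewriting $\operatorname{Re}(\tilde u_p\overline{\tilde v_q})$ and $\operatorname{Im}(\tilde u_p\overline{\tilde v_q})$ via $\tilde u_p\overline{\tilde v_q}\pm\overline{\tilde u_p}\tilde v_q$, and finally invoking the definitions \eqref{eq:711} of $S$ and $T$, the whole computation condenses to
\begin{equation*}
\langle R(U,V)W\rangle_j=\sum_{p,q,r=1}^{n}\bigl\{\tilde u_p\overline{\tilde v_q}\,S_{p,q,r}^{j}-\overline{\tilde u_p}\tilde v_q\,T_{p,q,r}^{j}\bigr\}\tilde w_r.
\end{equation*}
The identity $T_{p,q,r}^{j}=S_{q,p,r}^{j}$ is then immediate from \eqref{eq:R1}, \eqref{eq:R2} together with \eqref{eq:711}, and relabelling $p\leftrightarrow q$ in the $T$-sum fuses the two contributions into the antisymmetric expression \eqref{eq:23613}.

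The main obstacle is bookkeeping. Collapsing the eight $R$-terms down to two via (iv), (v) without sign errors and decomposing the real and imaginary parts of the coefficient products so that they recombine cleanly with $R_{p,q,r}^{A,j}$ and $R_{p,q,r}^{B,j}$ into $S_{p,q,r}^{j}$ and $T_{p,q,r}^{j}$ both require care. Conceptually, the one step that must be spotted rather than merely checked is the identity $T_{p,q,r}^{j}=S_{q,p,r}^{j}$, since this is what permits the two sums in the last display to be merged into the antisymmetric combination $\tilde u_p\overline{\tilde v_q}-\tilde v_p\overline{\tilde u_q}$ demanded by the statement.
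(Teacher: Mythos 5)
Your proposal is correct and follows essentially the same route as the paper's proof: expand $U,V,W$ in the parallel frame, collapse the eight curvature terms to the two base types via (iv) and (v) of Proposition~\ref{prop:fact_R}, identify the coefficients $\operatorname{Re}[\overline{\langle U\rangle_p}\langle V\rangle_q]$ and $\operatorname{Im}[\overline{\langle U\rangle_p}\langle V\rangle_q]$, recombine with $R^{A,j}_{p,q,r}$, $R^{B,j}_{p,q,r}$ into the $S$--$T$ form, and finish with $T_{p,q,r}^{j}=S_{q,p,r}^{j}$ followed by the relabelling $p\leftrightarrow q$. All intermediate identities you state (including the signs in the coefficient assembly and $\langle J_u\Xi\rangle_j=\sqrt{-1}\,\langle\Xi\rangle_j$) check out against the paper's computation.
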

If we write  
$\Xi=\displaystyle\sum_{k=1}^n
   (\Xi_k^R+J_u\Xi_k^I)e_k$ 
for $\Xi\in \Gamma(u^{-1}TN)$ by using real-valued functions 
$\Xi_k^R$ and $\Xi_k^I$ and substitute it into \eqref{eq:momo},   
then we see
\begin{align}
\langle\Xi\rangle_j
&=
\Xi_j^R+\sqrt{-1}\Xi_j^I. 
\label{eq:momo2}
\end{align}
\begin{proof}[Proof of Proposition~\ref{prop:tensor}]
 To begin with, let us write $U,V,W\in \Gamma(u^{-1}TN)$ as 
 \begin{align}
  U&=
  \sum_{p=1}^n
  (U_p^R+J_uU_p^I)e_p, 
  V=\sum_{q=1}^n
    (V_{q}^R+J_uV_{q}^I)e_{q}, 
  W=\sum_{r=1}^n
      (W_r^R+J_uW_r^I)e_r, 
  \label{eq:51}
  \end{align}
where $U_p^R, U_p^I$, $V_q^R, V_q^I$, $W_r^R, W_r^I$ for all $p,q,r\in \left\{1,\ldots,n\right\}$ are real-valued functions of $(t,x)$.  
As $R$ is trilinear, 
\begin{align}
&R(U,V)W
\nonumber
\\
&=
\sum_{p, q,r=1}^n
R(U_p^Re_p+J_uU_p^Ie_p, V_{q}^Re_{q}+J_uV_{q}^Ie_{q}) (W_r^Re_r+J_uW_r^Ie_r)
\nonumber
\\
&=\sum_{p, q,r=1}^n
\left\{
	  \begin{tabular}{l}
	    $U_p^RV_{q}^RW_r^RR(e_p,e_{q})e_r
	    +U_p^RV_{q}^RW_r^IR(e_p,e_{q})J_ue_r$
	    \\
	    $+U_p^RV_{q}^IW_r^RR(e_p,J_ue_{q})e_r
	    +U_p^RV_{q}^IW_r^IR(e_p,J_ue_{q})J_ue_r$
	    \\
	    $+U_p^IV_{q}^RW_r^RR(J_ue_p,e_{q})e_r
	    +U_p^IV_{q}^RW_r^IR(J_ue_p,e_{q})J_ue_r$
	    \\
	    $+U_p^IV_{q}^IW_r^RR(J_ue_p,J_ue_{q})e_r
	    +U_p^IV_{q}^IW_r^IR(J_ue_p,J_ue_{q})J_ue_r$
	  \end{tabular}
	  \right\}.
\nonumber
\end{align}
By (iv) and the first part of (v) in Proposition~\ref{prop:fact_R}, this becomes 
\begin{align}
R(U,V)W
&=
\sum_{p, q,r=1}^n
\left\{
	  \begin{tabular}{l}
	   $(U_p^RV_{q}^I-U_p^IV_{q}^R)W_r^RR(e_p,J_ue_{q})e_r$
	   \\
	   $+(U_p^RV_{q}^I-U_p^IV_{q}^R)W_r^IJ_uR(e_p,J_ue_{q})e_r$
	   \\
	   $+(U_p^RV_{q}^R+U_p^IV_{q}^I)W_r^RR(e_p,e_{q})e_r$
	   	   \\
	   	   $+(U_p^RV_{q}^R+U_p^IV_{q}^I)W_r^IJ_uR(e_p,e_{q})e_r$
	  \end{tabular}
	  \right\}.
\nonumber
\end{align}
Substitution of \eqref{eq:61} and \eqref{eq:62} into the above yields  
\begin{align}
&R(U,V)W
\nonumber
\\
&=
\sum_{p,q,r,k=1}^n
(U_p^RV_{q}^I-U_p^IV_{q}^R)(W_r^R+J_uW_r^I)
(R_{p,q+n,r}^k+J_uR_{p,q+n,r}^{k+n})e_k
\nonumber
\\
&\quad 
+
\sum_{p,q,r,k=1}^n
(U_p^RV_{q}^R+U_p^IV_{q}^I)(W_r^R+J_uW_r^I)
(R_{p,q,r}^k+J_uR_{p,q,r}^{k+n})e_k.
\nonumber
\end{align}
By \eqref{eq:momo2} and \eqref{eq:71}, 
this expression yields
\begin{align}
&\langle
R(U,V)W
\rangle_j
\nonumber
\\
&=
\sum_{p,q,r=1}^n
(U_p^RV_{q}^I-U_p^IV_{q}^R)
\langle
\sum_{k=1}^n
(W_r^R+J_uW_r^I)
(R_{p,q+n,r}^k+J_uR_{p,q+n,r}^{k+n})e_k
\rangle_j
\nonumber
\\
&\quad 
+
\sum_{p,q,r=1}^n
(U_p^RV_{q}^R+U_p^IV_{q}^I)
\langle
\sum_{k=1}^n
(W_r^R+J_uW_r^I)
(R_{p,q,r}^k+J_uR_{p,q,r}^{k+n})e_k
\rangle_j
\nonumber
\\
&=
\sum_{p,q,r=1}^n
(U_p^RV_{q}^I-U_p^IV_{q}^R)
(W_r^R+\sqrt{-1}W_r^I)
(R_{p,q+n,r}^j+\sqrt{-1}R_{p,q+n,r}^{j+n})
\nonumber
\\
&\quad 
+
\sum_{p,q,r=1}^n
(U_p^RV_{q}^R+U_p^IV_{q}^I)
(W_r^R+\sqrt{-1}W_r^I)
(R_{p,q,r}^j+\sqrt{-1}R_{p,q,r}^{j+n})
\nonumber
\\
&=
\sum_{p,q,r=1}^n
R_{p,q,r}^{B,j}(U_p^RV_{q}^I-U_p^IV_{q}^R)
(W_r^R+\sqrt{-1}W_r^I)
\nonumber
\\
&\quad 
+
\sum_{p,q,r=1}^n
R_{p,q,r}^{A,j}(U_p^RV_{q}^R+U_p^IV_{q}^I)
(W_r^R+\sqrt{-1}W_r^I).
\nonumber
\end{align}
By using an elementary calculation for complex numbers, 
\eqref{eq:momo2} for $U,V,W$, 
and using \eqref{eq:711}, 
we deduce  
\begin{align}
&\langle
R(U,V)W
\rangle_j
\nonumber
\\
&= 
\sum_{p,q,r=1}^n
R_{p,q,r}^{A,j}
\Re\left[\overline{(U_p^R+\sqrt{-1}U_p^I)}(V_{q}^R+\sqrt{-1}V_{q}^I)\right]
(W_r^R+\sqrt{-1}W_r^I)
\nonumber
\\
&\quad 
+
\sum_{p,q,r=1}^n
R_{p,q,r}^{B,j}
\Im\left[\overline{(U_p^R+\sqrt{-1}U_p^I)}(V_{q}^R+\sqrt{-1}V_{q}^I)\right]
(W_r^R+\sqrt{-1}W_r^I)
\nonumber
\\
&=
\sum_{p,q,r=1}^n
\left\{
	  R_{p,q,r}^{A,j}\Re\left[\overline{\langle U \rangle_p}\langle V \rangle_q\right]
	  +R_{p,q,r}^{B,j}\Im\left[\overline{\langle U \rangle_p}\langle V \rangle_q\right]
	  \right\}
	  \langle W \rangle_r
\nonumber
\\
&=
\frac{1}{2}
\sum_{p,q,r=1}^n
\left\{
	  (R_{p,q,r}^{A,j}+\sqrt{-1}R_{p,q,r}^{B,j})
	  \langle U \rangle_p\overline{\langle V \rangle_q}
	  -(-R_{p,q,r}^{A,j}+\sqrt{-1}R_{p,q,r}^{B,j})
	  \overline{\langle U \rangle_p}\langle V \rangle_q
	  \right\}
	  \langle W \rangle_r
\nonumber
\\
&=
\sum_{p,q,r=1}^n
\left\{
	  S_{p,q,r}^{j}
	  \langle U \rangle_p\overline{\langle V \rangle_q}
	  -T_{p,q,r}^{j}
	  \overline{\langle U \rangle_p}\langle V \rangle_q
	  \right\}
	  \langle W \rangle_r.
	  \label{eq:81-}
\end{align} 
Here, note that  
\begin{align}
T_{p,q,r}^j=S_{q,p,r}^j
\quad 
(\forall p,q,r,j\in \left\{1,\ldots,n\right\}), 
\label{eq:TtoS}
\end{align}
which immediately follows from  \eqref{eq:R1}, \eqref{eq:R2} in Proposition~\ref{proposition:R1}. 
Applying \eqref{eq:TtoS} to \eqref{eq:81-}, we derive 
\begin{align}
\langle
R(U,V)W
\rangle_j
&=
\sum_{p,q,r=1}^n
\left\{
	  S_{p,q,r}^{j}
	  \langle U \rangle_p\overline{\langle V \rangle_q}
	  -T_{q,p,r}^{j}
	  \overline{\langle U \rangle_q}\langle V \rangle_p
	  \right\}
	  \langle W \rangle_r
	  \nonumber
\\
&=
\sum_{p,q,r=1}^n
	  S_{p,q,r}^{j}\left(
	  \langle U \rangle_p\overline{\langle V \rangle_q}
	 -\langle V \rangle_p\overline{\langle U \rangle_q}
	  \right)
	  \langle W \rangle_r, 
	  \label{eq:81}	  
\end{align}
which is the desired result. 
\end{proof}
The next proposition also follows from Proposition~\ref{proposition:R1}. 
\begin{proposition}
\label{proposition:R4}
Under the same assumption as that in Proposition~\ref{proposition:R1}, 
\begin{align}
S_{p,q,r}^j
&=
S_{r,q,p}^j
\quad
(\forall p,q,r,j\in \left\{1,\ldots,n\right\}). 
\label{eq:tsu3}
\end{align}
\end{proposition}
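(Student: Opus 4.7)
The plan is to reduce the identity $S_{p,q,r}^j = S_{r,q,p}^j$ to purely algebraic manipulation of the relations collected in Proposition~\ref{proposition:R1}. By the definition \eqref{eq:711},
\[
2(S_{p,q,r}^j - S_{r,q,p}^j) = (R_{p,q,r}^{A,j} - R_{r,q,p}^{A,j}) + \sqrt{-1}\,(R_{p,q,r}^{B,j} - R_{r,q,p}^{B,j}),
\]
so it suffices to show that the right-hand side vanishes. My first step is to apply the antisymmetry \eqref{eq:R1} and the symmetry \eqref{eq:R2} to rewrite $R_{r,q,p}^{A,j} = -R_{q,r,p}^{A,j}$ and $R_{r,q,p}^{B,j} = R_{q,r,p}^{B,j}$; this brings every surviving triple of lower indices into cyclic order with respect to $(p,q,r)$, recasting the expression as
\[
(R_{p,q,r}^{A,j} + R_{q,r,p}^{A,j}) + \sqrt{-1}\,(R_{p,q,r}^{B,j} - R_{q,r,p}^{B,j}).
\]

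Next, I would invoke the first-Bianchi-type identity \eqref{eq:R3} to collapse the two $A$-symbols into a single one: $R_{p,q,r}^{A,j} + R_{q,r,p}^{A,j} = -R_{r,p,q}^{A,j}$. The pivotal step is then to apply \eqref{eq:R4} under the cyclic substitution $(p,q,r)\mapsto(r,p,q)$, which reads $R_{r,p,q}^{A,j} = \sqrt{-1}\,(R_{p,q,r}^{B,j} - R_{q,r,p}^{B,j})$. Substituting this back, the remaining $A$-contribution equals $-\sqrt{-1}\,(R_{p,q,r}^{B,j} - R_{q,r,p}^{B,j})$, which exactly cancels the explicit $B$-contribution. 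Hence $S_{p,q,r}^j - S_{r,q,p}^j = 0$.

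The argument is purely algebraic, so I do not expect any analytic obstacle. The only point requiring care is choosing the correct cyclic shift of indices in \eqref{eq:R4}; it is precisely the interplay between the first Bianchi identity for $R^{A}$ and the K\"ahler-type relation \eqref{eq:R4} that forces the swap symmetry $p \leftrightarrow r$ of $S^{j}_{p,q,r}$. Conceptually, \eqref{eq:tsu3} is the shadow of the well-known K\"ahler identity $R_{p\bar{q}r\bar{j}} = R_{r\bar{q}p\bar{j}}$ in the complexified frame $\{f_p,\bar{f}_p\}$ with $f_p=\tfrac{1}{\sqrt{2}}(e_p-\sqrt{-1}J_ue_p)$, but the above real-component derivation stays within the formalism already developed in the paper.
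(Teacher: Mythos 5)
Your argument is correct and is essentially the paper's own proof: both rest on exactly the same algebraic ingredients \eqref{eq:R1}--\eqref{eq:R4}, with the key cancellation coming from the first Bianchi identity \eqref{eq:R3} combined with the cyclically shifted relation \eqref{eq:R4}. The only cosmetic difference is that the paper first establishes the auxiliary symmetry $T_{p,q,r}^j=T_{p,r,q}^j$ and then transfers it to $S$ via \eqref{eq:TtoS}, whereas you expand $S_{p,q,r}^j-S_{r,q,p}^j$ directly from \eqref{eq:711} and show it vanishes.
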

\begin{proof}[Proof of Proposition~\ref{proposition:R4}]
For any $p,q,r,j\in \left\{1,\ldots,n\right\}$, 
it follows that 
\begin{align}
-\sqrt{-1}(R_{p,q,r}^{A,j}
-R_{p,r,q}^{A,j})
&=
-\sqrt{-1}(
R_{p,q,r}^{A,j}
+R_{r,p,q}^{A,j})
\quad 
(\because \eqref{eq:R1})
\nonumber
\\
&=
\sqrt{-1}R_{q,r,p}^{A,j}
\quad
(\because \eqref{eq:R3})
\nonumber
\\
&=
-(R_{r,p,q}^{B,j}-R_{p,q,r}^{B,j})
\quad 
(\because \eqref{eq:R4})
\nonumber
\\
&=R_{p,q,r}^{B,j}-R_{p,r,q}^{B,j}.
\quad
(\because \eqref{eq:R2})
\label{eq:tsu1}
\end{align}
By multiplying both sides of \eqref{eq:tsu1} by $\sqrt{-1}$
and by transposing the terms,   
\eqref{eq:tsu1} reads 
\begin{equation}
R_{p,q,r}^{A,j}
-\sqrt{-1}R_{p,q,r}^{B,j}
=
R_{p,r,q}^{A,j}
-\sqrt{-1}R_{p,r,q}^{B,j}.
\label{eq:ntsu2}
\end{equation}
This shows 
\begin{align}
T_{p,q,r}^j
&=
T_{p,r,q}^j
\quad
(\forall p,q,r,j\in \left\{1,\ldots,n\right\}). 
\label{eq:tsu2}
\end{align}
By combining \eqref{eq:tsu2} and \eqref{eq:TtoS}, 
we obtain 
\begin{align}
S_{p,q,r}^j
&=
T_{q,p,r}^j
=T_{q,r,p}^j
=S_{r,q,p}^j
\quad
(\forall p,q,r,j\in \left\{1,\ldots,n\right\}).  
\end{align}
\end{proof} 
Propositions~\ref{prop:tensor} and \ref{proposition:R4} 
and sometimes Proposition~\ref{proposition:Rloc} 
will be sufficient to show the claims 
in Section~\ref{subsection:reduction} and 
Section~\ref{section:Examples}. 
\subsection{Proof of Theorem~\ref{theorem:momo}}
\label{subsection:reduction}
In this subsection,  we complete the proof of Theorem~\ref{theorem:momo}.
\begin{proof}[Proof of Theorem~\ref{theorem:momo}]
Let $\left\{e_j, Je_j\right\}_{j=1}^n$  be the orthonormal frame 
for $u^{-1}TN$ introduced in Section~\ref{subsection:frame}. 
We represent $u_x, u_t\in \Gamma(u^{-1}TN)$ by 
\begin{align}
u_x&=\sum_{p=1}^{n}(\xi_p+\eta_pJ_u)e_p, 
\quad
u_t=\sum_{p=1}^{n}(\mu_p+\nu_pJ_u)e_p,  
\label{eq:uxt}
\end{align}
where $\xi_p$, $\eta_p$, $\mu_p$, $\nu_p$ for 
$p\in \left\{1,\ldots,n\right\}$
are real-valued functions of $(t,x)$. 
Set $Q_j:=\langle u_x\rangle_j$ and $P_j:=\langle u_t\rangle_j$ for 
$j\in \left\{1,\ldots,n\right\}$. 
By \eqref{eq:momo2}, they satisfy  
 \begin{align}
 Q_j&=\langle u_x\rangle_j=\xi_j+\sqrt{-1}\eta_j, 
 \quad
 P_j=\langle u_t\rangle_j=\mu_j+\sqrt{-1}\nu_j.
\label{eq:req10181}
 \end{align}  
Substitution of \eqref{eq:pde} into $P_j=\langle u_t\rangle_j$ yields   
\begin{align}
P_j&=
\langle (a\,J_u\nabla_x^3+\lambda\,J_u\nabla_x)u_x\rangle_j
\nonumber
\\
&\quad
+b\langle R(\nabla_xu_x,u_x)J_uu_x\rangle_j
+c\langle R(J_uu_x,u_x)\nabla_xu_x\rangle_j. 
\label{eq:236181}
\end{align}
We compute the right hand side of \eqref{eq:236181}. 
First, it follows from \eqref{eq:uxt} 
\begin{align}
& \left(
 a\,J_u\nabla_x^3
 +\lambda\,J_u\nabla_x
 \right)
 u_x
=
\left(
 a\,J_u\nabla_x^3
 +\lambda\,J_u\nabla_x
 \right)
 \sum_{j=1}^n(\xi_j+J_u\eta_j)e_j
 \nonumber
 \\
 &=
 \sum_{j=1}^n
 (
 a\,J_u\p_x^3\xi_j-a\p_x^3\eta_j+\lambda\,J_u\p_x\xi_j-\lambda\,\p_x\eta_j
 )e_j, 
 \nonumber
 \end{align}
 which by \eqref{eq:momo2} shows 
 \begin{align}
 \langle
 \left(
  a\,J_u\nabla_x^3
  +\lambda\,J_u\nabla_x
  \right)
  u_x
 \rangle_j
 &=
  a \sqrt{-1}\p_x^3\xi_j-a\p_x^3\eta_j+\lambda \sqrt{-1}\p_x\xi_j-\lambda\,\p_x\eta_j
  \nonumber
  \\
  &=
   \sqrt{-1}(a\p_x^3+\lambda \p_x)Q_j. 
   \label{eq:s21}
 \end{align} 
 Second, noting 
 $\langle \nabla_xu_x\rangle_p=\p_xQ_p$, 
 $\langle u_x\rangle_q=Q_q$, 
 and $\langle J_uu_x\rangle_r=\sqrt{-1}Q_r$, 
 we apply \eqref{eq:81} in Proposition~\ref{prop:tensor} 
 for $(U,V,W)=(\nabla_xu_x, u_x,J_uu_x)$ to deduce   
 \begin{align}
 &\langle 
 R(\nabla_xu_x,u_x)J_uu_x
 \rangle_j
 =
 \sum_{p,q,r=1}^n
 S_{p,q,r}^j
 \left(
 \p_xQ_p\overline{Q_q}
 -Q_p\overline{\p_xQ_q}
 \right)
 \sqrt{-1}Q_r
 	  \nonumber
 \\
 &=
  \sqrt{-1}\sum_{p,q,r=1}^n
  S_{p,q,r}^j\p_xQ_p\overline{Q_q}Q_r
  - \sqrt{-1}\sum_{p,q,r=1}^n
    S_{p,q,r}^jQ_p\overline{\p_xQ_q}Q_r.
  \label{eq:s41}
  \end{align}
Third, in the same way as above, 
noting $\langle J_uu_x\rangle_p=\sqrt{-1}Q_p$, 
$\langle u_x\rangle_q=Q_q$, 
and $\langle \nabla_xu_x\rangle_r=\p_xQ_r$, 
we apply \eqref{eq:81} for $(U,V,W)=(J_uu_x,u_x,\nabla_xu_x)$ to deduce 
\begin{align}
\langle
R(J_uu_x,u_x)\nabla_xu_x
\rangle_j
&=
\sum_{p,q,r=1}^n
 S_{p,q,r}^j
 \left(
 \sqrt{-1}Q_p\overline{Q_q}
 -
 Q_p\overline{\sqrt{-1}Q_q}
 \right)
 \p_xQ_r
 \nonumber
 \\
 &=
2\sqrt{-1}\sum_{p,q,r=1}^n
  S_{p,q,r}^j
  Q_p\overline{Q_q}
  \p_xQ_r.
  \nonumber
  \end{align}
Furthermore, replacing indexes $(p,q,r)\to (r,q,p)$ in the summation 
and using \eqref{eq:tsu3} in Proposition~\ref{proposition:R4}, 
we find     
 \begin{align}
 \langle
 R(J_uu_x,u_x)\nabla_xu_x
 \rangle_j
 &= 
 2\sqrt{-1}\sum_{p,q,r=1}^n
   S_{p,q,r}^j
   \p_xQ_p\overline{Q_q}
   Q_r.
\label{eq:s42}
\end{align}
Substituting \eqref{eq:s21}, \eqref{eq:s41}, and \eqref{eq:s42} 
into \eqref{eq:236181}, 
we have  
\begin{align}
P_j&=
\sqrt{-1} (a\p_x^3+\lambda \p_x)Q_j
+
 (b+2c)\sqrt{-1}\sum_{p,q,r=1}^n
  S_{p,q,r}^j\p_xQ_p\overline{Q_q}Q_r
 \nonumber
 \\&\quad
-b\sqrt{-1}\sum_{p,q,r=1}^n
S_{p,q,r}^jQ_p\overline{\p_xQ_q}Q_r.
\label{eq:s51}
\end{align} 
\par 
Next, we seek the condition obtained from the fact $\nabla_xu_t=\nabla_tu_x$. 
By \eqref{eq:uxt} and \eqref{eq:mf01},    
\begin{align}
\langle 
\nabla_xu_t
\rangle_j
&=
\left\langle 
 \sum_{p=1}^{n}(\p_x\mu_p+J_u\p_x\nu_p)e_p
 \right\rangle_j
 = 
 \p_x\mu_j+\sqrt{-1}\p_x\nu_j
 =\p_xP_j.
 \label{eq:s61}
 \end{align}
On the other hand, by \eqref{eq:uxt} and \eqref{eq:tep}, 
\begin{align}
 \nabla_tu_x
 &=
 \sum_{p=1}^{n}(\p_t\xi_p+J_u\p_t\eta_p)e_p
  +
  \sum_{p,r=1}^{n}(\xi_r+J_u\eta_r)(a_p^r+J_ub_p^r)e_p, 
  \nonumber 
\end{align}
which shows 
\begin{align}
\langle 
\nabla_tu_x
\rangle_j
   &= 
   \p_t\xi_j+\sqrt{-1}\p_t\eta_j
   +\sum_{r=1}^{n}\left\{
   (a_j^r\xi_r-b_j^r\eta_r)+\sqrt{-1}(a_j^r\eta_r+b_j^r\xi_r)
    \right\}
   \nonumber
   \\
   &=
      \p_tQ_j
      +\sum_{r=1}^{n}
      (a_j^r+\sqrt{-1}b_j^r)Q_r. 
      \label{eq:s62}
 \end{align}
Since $\nabla_xu_t=\nabla_tu_x$ holds, 
\eqref{eq:s61} and \eqref{eq:s62} show   
\begin{align}
 \p_tQ_j
 &=
 \p_xP_j-\sum_{r=1}^{n}
      (a_j^r+\sqrt{-1}b_j^r)Q_r 
\quad
(j\in \left\{1,\ldots,n\right\}).
      \label{eq:s63}
\end{align}
\par
Next, we seek the condition obtained from the fact  $\nabla_x\nabla_te_r=\nabla_t\nabla_xe_r+R(u_x,u_t)e_r=R(u_x,u_t)e_r$. 
By \eqref{eq:mf01} and \eqref{eq:tep}, 
\begin{align}
\langle 
\nabla_x\nabla_te_r
\rangle_j
=
\left\langle
\sum_{p=1}^{n}
\p_x(a_p^r+J_ub_p^r)e_p
\right\rangle_j
=
\p_x(a_j^r+\sqrt{-1}b_j^r). 
\label{eq:s71}  
\end{align} 
On the other hand, 
noting 
$\langle u_x\rangle_p=Q_p$, 
$\langle u_t\rangle_p=P_p$ 
and 
$\langle e_r\rangle_p=\delta_{pr}$, 
we apply \eqref{eq:81} for $(U,V,W)=(u_x,u_t,e_r)$, 
which yields 
\begin{align}
\langle R(u_x,u_t)e_r\rangle_j
&=
\sum_{p,q,r^{\prime}=1}^n
S_{p,q,r^{\prime}}^j
(Q_p\overline{P_q}-P_p\overline{Q_q})
\delta_{rr^{\prime}}
=
\sum_{p,q=1}^n
S_{p,q,r}^j
(Q_p\overline{P_q}-P_p\overline{Q_q}).
\label{eq:s72}
\end{align}
Comparing \eqref{eq:s71} and \eqref{eq:s72}, 
we have  
\begin{align}
\p_x(a_j^r+\sqrt{-1}b_j^r)
&=
\sum_{p,q=1}^n
S_{p,q,r}^j
(Q_p\overline{P_q}-P_p\overline{Q_q})
\quad
(j,r\in \left\{1,\ldots,n\right\}).
\label{eq:s73}
\end{align}
Furthermore, using \eqref{eq:s51} with the 
replacement of indexes $(p,q,r)\to (\alpha,\beta,\gamma)$ in the summation,  
we have 
\begin{align}
Q_p\overline{P_q}
&=
  -\sqrt{-1} a Q_p\overline{\p_x^3Q_q}-\sqrt{-1} \lambda Q_p\overline{\p_xQ_q}
  \nonumber
  \\
  &\quad
     -(b+2c)\sqrt{-1}\sum_{\alpha,\beta,\gamma=1}^n
      \overline{S_{\alpha,\beta,\gamma}^q}
      \overline{\p_xQ_{\alpha}}Q_{\beta}\overline{Q_{\gamma}}Q_p
\nonumber
\\
&\quad 
+b\sqrt{-1}
\sum_{\alpha,\beta,\gamma=1}^n
      \overline{S_{\alpha,\beta,\gamma}^q}
\overline{Q_{\alpha}}\p_xQ_{\beta}\overline{Q_{\gamma}}Q_p
\nonumber
\\     
&=
  \p_x(-\sqrt{-1} a Q_p\overline{\p_x^2Q_q})
  +\sqrt{-1} a \p_xQ_p\overline{\p_x^2Q_q}
  -\sqrt{-1}\lambda Q_p\overline{\p_xQ_q}
  \nonumber
  \\
  &\quad
     -(b+2c)\sqrt{-1}\sum_{\alpha,\beta,\gamma=1}^n
      \overline{S_{\alpha,\beta,\gamma}^q}
      \overline{\p_xQ_{\alpha}}Q_{\beta}\overline{Q_{\gamma}}Q_p
\nonumber
\\
&\quad 
+b\sqrt{-1}
\sum_{\alpha,\beta,\gamma=1}^n
      \overline{S_{\alpha,\beta,\gamma}^q}
\overline{Q_{\alpha}}\p_xQ_{\beta}\overline{Q_{\gamma}}Q_p
    \nonumber
\end{align}
for any $p,q\in \left\{1,\ldots,n\right\}$. 
Substituting this into \eqref{eq:s73} multiplied by $\sqrt{-1}$, we deduce  
\begin{align}
&\sqrt{-1}\p_x(a_j^r+\sqrt{-1}b_j^r)
\nonumber
\\
&=
  a\sum_{p,q=1}^n
  S_{p,q,r}^{j} 
  \p_x\left(
  \p_x^2Q_p\overline{Q_q}
  +Q_p\overline{\p_x^2Q_q}
  \right)
  \nonumber
  \\
  &\quad
  -a\sum_{p,q=1}^n 
  S_{p,q,r}^{j}
  \left(
  \p_x^2Q_p\overline{\p_xQ_q}+\p_xQ_p\overline{\p_x^2Q_q}
  \right)
  \nonumber
  \\
  &\quad
 +\lambda 
 \sum_{p,q=1}^n  
 S_{p,q,r}^{j}
 \left(
 \p_xQ_p\overline{Q_q}
 +Q_p\overline{\p_xQ_q}
 \right)
 \nonumber
\\&\quad
     +(b+2c)\sum_{p,q,\alpha,\beta,\gamma=1}^n 
     \left(
     S_{p,q,r}^{j}\overline{S_{\alpha,\beta,\gamma}^{q}}
     \overline{\p_xQ_{\alpha}}Q_{\beta}\overline{Q_{\gamma}}
     Q_p
     +S_{p,q,r}^{j}S_{\alpha,\beta,\gamma}^{p}
          \p_xQ_{\alpha}\overline{Q_{\beta}}Q_{\gamma}
          \overline{Q_q}
     \right)
     \nonumber
     \\
     &\quad
     -b\sum_{p,q,\alpha,\beta,\gamma=1}^n
     \left(
     S_{p,q,r}^{j}\overline{S_{\alpha,\beta,\gamma}^{q}}
     \overline{Q_{\alpha}}\p_xQ_{\beta}\overline{Q_{\gamma}}
     Q_p
     +S_{p,q,r}^{j}S_{\alpha,\beta,\gamma}^{p}
     Q_{\alpha}\overline{\p_xQ_{\beta}}Q_{\gamma}
          \overline{Q_q}
     \right)
\nonumber
\\
&=:
\p_x\left\{
  a\sum_{p,q=1}^n
  S_{p,q,r}^{j} 
  \left(
  \p_x^2Q_p\overline{Q_q}
  +Q_p\overline{\p_x^2Q_q}
  \right)
  \right\}
  -\p_x\left\{
  a\sum_{p,q=1}^n 
  S_{p,q,r}^{j}
  \p_xQ_p\overline{\p_xQ_q}
  \right\}
    \nonumber
    \\
    &\quad
 +\p_x\left\{
 \lambda 
 \sum_{p,q=1}^n  
 S_{p,q,r}^{j}
 Q_p\overline{Q_q}
 \right\}
     -S_{j,r}^R
     -S_{j,r}^{\nabla R}
 \label{eq:s83}
\end{align}
for any $j,r\in \left\{1,\ldots,n\right\}$,  
where 
\begin{align}
&S_{j,r}^R:=
-(b+2c)(S_1+S_2)+b(S_3+S_4),  
\label{eq:211}
\\
&\qquad S_1:=\sum_{p,q,\alpha,\beta,\gamma=1}^n 
     S_{p,q,r}^{j}\overline{S_{\alpha,\beta,\gamma}^{q}}
     \overline{\p_xQ_{\alpha}}Q_{\beta}\overline{Q_{\gamma}}
     Q_p,
     \label{eq:S1}
     \\
&\qquad S_2:=\sum_{p,q,\alpha,\beta,\gamma=1}^n 
     S_{p,q,r}^{j}S_{\alpha,\beta,\gamma}^{p}
          \p_xQ_{\alpha}\overline{Q_{\beta}}Q_{\gamma}
          \overline{Q_q},
          \label{eq:S2}
          \\
&\qquad S_3:=\sum_{p,q,\alpha,\beta,\gamma=1}^n 
     S_{p,q,r}^{j}\overline{S_{\alpha,\beta,\gamma}^{q}}
     \overline{Q_{\alpha}}\p_xQ_{\beta}\overline{Q_{\gamma}}
     Q_p,
     \label{eq:S3}
     \\
&\qquad S_4:=\sum_{p,q,\alpha,\beta,\gamma=1}^n 
S_{p,q,r}^{j}S_{\alpha,\beta,\gamma}^{p}
     Q_{\alpha}\overline{\p_xQ_{\beta}}Q_{\gamma}
          \overline{Q_q}, 
          \label{eq:S4}
\end{align}
and 
\begin{align}
S_{j,r}^{\nabla R}
&:=
 a\sum_{p,q=1}^n
  \p_x(S_{p,q,r}^{j}) 
  \left(
  \p_x^2Q_p\overline{Q_q}
  +Q_p\overline{\p_x^2Q_q}
  \right)
  \nonumber
  \\
  &\quad
- a\sum_{p,q=1}^n 
  \p_x(S_{p,q,r}^{j})
  \p_xQ_p\overline{\p_xQ_q}
+\lambda 
 \sum_{p,q=1}^n  
 \p_x(S_{p,q,r}^{j})
 Q_p\overline{Q_q}.
 \label{eq:05201}
\end{align}
Here, it follows that 
\begin{align}
|a_j^r+\sqrt{-1}b_j^r|
&=
|h(\nabla_te_r,e_j)+\sqrt{-1}h(\nabla_te_r,J_ue_j)|
\leqslant 
2|\nabla_te_r|_h, 
\nonumber
\end{align}
where $|\cdot|_h=\sqrt{h(\cdot,\cdot)}$.
Moreover, $\nabla_te_r=O(|u_t|_h)$ holds, since $N$ is compact.
In addition,
$|u_t(t,x)|_h=
|(aJ_u\nabla_x^3u_x+\cdots)(t,x)|_h\to 0$ as $x\to -\infty$ for each $t\in (-T,T)$,  
since $u_x(t,\cdot)$ is in the Schwartz class. 
Combining them, we see 
\begin{equation}
\displaystyle\lim_{x\to -\infty}(a_j^r+\sqrt{-1}b_j^r)(t,x)
=0 
\quad 
(t\in (-T,T)).
\label{eq:add1015}
\end{equation} 
Integrating both sides of \eqref{eq:s83} with respect to $x$,  
and using \eqref{eq:add1015}, 
we obtain 
\begin{align}
&\sqrt{-1}(a_j^r+\sqrt{-1}b_j^r)(t,x)
\nonumber
\\
&=
a\sum_{p,q=1}^n
S_{p,q,r}^{j}
\p_x^2Q_p\overline{Q_q}
+a\sum_{p,q=1}^n
S_{p,q,r}^{j}
Q_p\overline{\p_x^2Q_q}
-a\sum_{p,q=1}^n
S_{p,q,r}^{j}\p_xQ_p\overline{\p_xQ_q}
\nonumber
\\
&\quad
+\lambda
\sum_{p,q=1}^n
S_{p,q,r}^{j}Q_p\overline{Q_q}
-\int_{-\infty}^{x}
S_{j,r}^R(t,y)
dy
-\int_{-\infty}^{x}
S_{j,r}^{\nabla R}(t,y)
dy. 
\label{eq:s13R1}
\end{align}
\par 
By substitution of \eqref{eq:s51} and \eqref{eq:s13R1} into \eqref{eq:s63} 
multiplied by $\sqrt{-1}$, we deduce  
\begin{align}
&\sqrt{-1}\p_tQ_j+(a\p_x^4+\lambda \p_x^2)Q_j
\nonumber
\\
&=
-(b+2c)
    \sum_{p,q,r=1}^n
    \p_x\left\{
    S_{p,q,r}^{j}
    \p_xQ_p\overline{Q_q}Q_r
    \right\}
+b\sum_{p,q,r=1}^n
        \p_x\left\{
    S_{p,q,r}^{j}
    Q_{p}\overline{\p_xQ_q}Q_r
\right\}
\nonumber
\\
&\quad
-a
\sum_{p,q,r=1}^n
S_{p,q,r}^{j}
\p_x^2Q_p\overline{Q_q}Q_r
-a\sum_{p,q,r=1}^n
S_{p,q,r}^{j}
Q_p\overline{\p_x^2Q_q}Q_r
\nonumber
\\
&\quad
+a
\sum_{p,q,r=1}^n
S_{p,q,r}^{j}\p_xQ_p\overline{\p_xQ_q}Q_r
-\lambda
\sum_{p,q,r=1}^n
S_{p,q,r}^{j}Q_p\overline{Q_q}Q_r
\nonumber
\\
&\quad
+\sum_{r=1}^{n}
\left(\int_{-\infty}^{x}
S_{j,r}^R(t,y)
dy\right)Q_r
+\sum_{r=1}^{n}
\left(
\int_{-\infty}^{x}
S_{j,r}^{\nabla R}(t,y)
dy\right)
Q_r
\nonumber
\\
&=
   \left(-a-b-2c\right)
    \sum_{p,q,r=1}^n
    S_{p,q,r}^{j}
    \p_x^2Q_p\overline{Q_q}Q_r
    \nonumber
    \\
    &\quad
    +\left(a-b-2c+b\right)
        \sum_{p,q,r=1}^n
        S_{p,q,r}^{j}
        \p_xQ_p\overline{\p_xQ_q}Q_r
        \nonumber
        \\
    &\quad
       +(-b-2c)
        \sum_{p,q,r=1}^n
        S_{p,q,r}^{j}
        \p_xQ_p\overline{Q_q}\p_xQ_r
    +(-a+b)
        \sum_{p,q,r=1}^n
    S_{p,q,r}^{j}
    Q_{p}\overline{\p_x^2Q_q}Q_r
\nonumber
        \\
        &\quad
    +b \sum_{p,q,r=1}^n
    S_{p,q,r}^{j}
    Q_{p}\overline{\p_xQ_q}\p_xQ_r
    +(-b-2c)
        \sum_{p,q,r=1}^n
        \p_x(
        S_{p,q,r}^{j})
        \p_xQ_p\overline{Q_q}Q_r
        \nonumber
        \\
        &\quad
        +b
            \sum_{p,q,r=1}^n
            \p_x(
        S_{p,q,r}^{j})
        Q_{p}\overline{\p_xQ_q}Q_r
            -\lambda
            \sum_{p,q,r=1}^n
            S_{p,q,r}^{j}Q_p\overline{Q_q}Q_r
            \nonumber
            \\
            &\quad
            +
            \sum_{r=1}^{n}
            \left(\int_{-\infty}^{x}
            S_{j,r}^R(t,y)
            dy\right)Q_r
            +
            \sum_{r=1}^{n}
            \left(
            \int_{-\infty}^{x}
            S_{j,r}^{\nabla R}(t,y)
            dy\right)
            Q_r.
    \nonumber
\end{align}
Moreover, replacing indexes $(p,q,r)\to (r,q,p)$ in the summation and 
using \eqref{eq:tsu3} shows
\begin{align}
\sum_{p,q,r=1}^n
    S_{p,q,r}^{j}
    Q_{p}\overline{\p_xQ_q}\p_xQ_r
 &=\sum_{p,q,r=1}^n
                S_{p,q,r}^{j}
                \p_xQ_p\overline{\p_xQ_q}Q_r.
\nonumber
\end{align}
Using this, we have
\begin{align}
&\sqrt{-1}\p_tQ_j+(a\p_x^4+\lambda \p_x^2)Q_j
\nonumber
\\
&=
   d_1
    \sum_{p,q,r=1}^n
    S_{p,q,r}^{j}
    \p_x^2Q_p\overline{Q_q}Q_r
    +d_2
            \sum_{p,q,r=1}^n
        S_{p,q,r}^{j}
        Q_{p}\overline{\p_x^2Q_q}Q_r
    +d_3
        \sum_{p,q,r=1}^n
        S_{p,q,r}^{j}
        \p_xQ_p\overline{\p_xQ_q}Q_r
            \nonumber
             \\
             &\quad
       +d_4
        \sum_{p,q,r=1}^n
        S_{p,q,r}^{j}
        \p_xQ_p\overline{Q_q}\p_xQ_r
        +(-b-2c)
            \sum_{p,q,r=1}^n
            \p_x(
            S_{p,q,r}^{j})
            \p_xQ_p\overline{Q_q}Q_r
      \nonumber
        \\
        &\quad
            +b
                \sum_{p,q,r=1}^n
                \p_x(
            S_{p,q,r}^{j})
            Q_{p}\overline{\p_xQ_q}Q_r
    -\lambda
    \sum_{p,q,r=1}^n
    S_{p,q,r}^{j}Q_p\overline{Q_q}Q_r
 \nonumber
 \\
 &\quad
    +
    \sum_{r=1}^{n}
    \left(\int_{-\infty}^{x}
    S_{j,r}^R(t,y)
    dy\right)Q_r
    +
    \sum_{r=1}^{n}
    \left(
    \int_{-\infty}^{x}
    S_{j,r}^{\nabla R}(t,y)
    dy\right)
    Q_r, 
    \label{eq:s51251}
\end{align}
where $d_1,d_2,d_3,d_4$ are the same constants as those 
in the statement of  Theorem~\ref{theorem:momo}. 
\par 
Furthermore, we compute the last two terms 
of the right hand side of \eqref{eq:s51251}.
First, recalling \eqref{eq:211} with \eqref{eq:S1}-\eqref{eq:S4}, 
we see $S_{j,r}^R$ is equal to $f_{j,r}^1(Q,\p_xQ)$ in \eqref{eq:r6201}, 
and hence
\begin{align}
    \sum_{r=1}^{n}
    \left(\int_{-\infty}^{x}
    S_{j,r}^R(t,y)
    dy\right)Q_r
&=
    \sum_{r=1}^{n}
    \left(\int_{-\infty}^{x}
    f_{j,r}^1(Q,\p_xQ)(t,y)
    dy\right)Q_r.
\label{eq:236201}
\end{align}
Second, it follows from \eqref{eq:05201}
\begin{align}
\int_{-\infty}^{x}
    S_{j,r}^{\nabla R}(t,y)
    dy
&=
a
\int_{-\infty}^{x}
\sum_{p,q=1}^n
  \left(
  \p_x(S_{p,q,r}^{j}) 
  (\p_x^2Q_p\overline{Q_q}
  +Q_p\overline{\p_x^2Q_q})
  \right)
  (t,y)
dy
\nonumber
\\
&\quad 
-a
\int_{-\infty}^{x}
\sum_{p,q=1}^n
\left( 
  \p_x(S_{p,q,r}^{j})
  \p_xQ_p\overline{\p_xQ_q}
\right)(t,y)
dy
\nonumber
\\
&\quad 
+\lambda
\int_{-\infty}^{x} 
 \sum_{p,q=1}^n  
 \left(
 \p_x(S_{p,q,r}^{j})
 Q_p\overline{Q_q}
 \right)
 (t,y)
dy.
\label{eq:05202}
\end{align}
For the first term of the right hand side, 
we rewrite as 
\begin{align}
\p_x(S_{p,q,r}^{j}) 
  (\p_x^2Q_p\overline{Q_q}
  +Q_p\overline{\p_x^2Q_q})
  &=
\p_x\left\{
\p_x(S_{p,q,r}^{j}) 
  (\p_xQ_p\overline{Q_q}
  +Q_p\overline{\p_xQ_q})
\right\}
\nonumber 
\\
&\quad 
-\p_x^2(S_{p,q,r}^{j}) 
  (\p_xQ_p\overline{Q_q}
  +Q_p\overline{\p_xQ_q})
  \nonumber
 \\
 &\quad 
 -2\p_x(S_{p,q,r}^{j}) 
    \p_xQ_p\overline{\p_xQ_q}. 
\label{eq:add11172}
\end{align}
Here, 
we see there exists a positive constant $C(N)$ depending only on $N$ 
such that 
$$
|\p_x\left(S_{p,q,r}^j\right)|\leqslant C(N)|Q|, 
\quad 
|\p_x^2\left(S_{p,q,r}^j\right)|\leqslant C(N)(|\p_xQ|+|Q|),  
$$ 
since $S_{p,q,r}^j(t,x)$ depends on $u(t,x)\in N$ 
and $N$ is compact. 
(This can be also proved by taking partial derivatives of 
the right hand of \eqref{eq:add11171} with respect to $x$.) 
This ensures 
$\displaystyle\lim_{x\to -\infty}\p_x(S_{p,q,r}^{j})(\p_xQ_p\overline{Q_q}
+Q_p\overline{\p_xQ_q})(t,x)=0$,  
since $Q(t,\cdot):\RR\to \mathbb{C}^n$ is in the Schwartz class. 
Noting this and substituting \eqref{eq:add11172} into \eqref{eq:05202} leads to
\begin{align}
    \sum_{r=1}^{n}
    \left(\int_{-\infty}^{x}
    S_{j,r}^{\nabla R}(t,y)
    dy\right)Q_r
&=
a\sum_{p,q,r=1}^n
  \p_x(S_{p,q,r}^{j}) 
  (\p_xQ_p\overline{Q_q}Q_r
  +Q_p\overline{\p_xQ_q}Q_r)
  \nonumber
  \\
  &\quad
  +
    \sum_{r=1}^{n}
    \left(\int_{-\infty}^{x}
    f_{j,r}^2(Q,\p_xQ)(t,y)
    dy\right)Q_r, 
    \label{eq:236202}
\end{align}
where $f_{j,r}^2(Q,\p_xQ)$ is given by \eqref{eq:r6202}.
Substituting \eqref{eq:236201} and \eqref{eq:236202} 
into \eqref{eq:s51251}, we derive the desired expression 
\eqref{eq:r51251} with \eqref{eq:r6201} and \eqref{eq:r6202}, 
which completes the proof of Theorem~\ref{theorem:momo}. 
\end{proof}
\section{Examples (1) and (2)}
\label{section:Examples}
In this section, taking two examples of $(N,J,h)$,  
we formulate \eqref{eq:r51251} for $Q$ 
in Theorem~\ref{theorem:momo} more explicitly. 
\subsection{Example~(1).}
\label{subsection:ex1}
Let $(N,J,h)$ be a compact Riemann surface.
Since $n=1$ in this setting,  
the orthonormal frame introduced by \eqref{eq:mf01} 
in Section~\ref{subsection:frame} is $\left\{e_1, J_ue_1\right\}$, 
and thus only $S_{1,1,1}^1$ is required to compute \eqref{eq:r51251}. 
We see 
\begin{equation}
S_{1,1,1}^1=\dfrac{\kappa(u)}{2}, 
\label{eq:s1111}
\end{equation}
where $(\kappa(u))(t,x):=\kappa(u(t,x))$ denotes the Gaussian curvature at $u(t,x)\in N$ 
which is known to be characterized by
\begin{equation}
\kappa(u)=h(R(e_1,J_ue_1)J_ue_1,e_1).
\label{eq:nawa1}
\end{equation} 
To see this, recall that \eqref{eq:add11171} for $p,q,r,j=1$ yields 
\begin{align}
2S_{1,1,1}^1
&=
h(R(e_1,e_1)e_1,e_1)
+\sqrt{-1}h(R(e_1,e_1)e_1,J_ue_1)
\nonumber
\\
&\quad
+
\sqrt{-1}
\left\{
h(R(e_1,J_ue_1)e_1,e_1)
+\sqrt{-1}h(R(e_1,J_ue_1)e_1,J_ue_1)
\right\}.
\nonumber
\end{align} 
Moreover, by (i) and (iii) in Proposition~\ref{prop:fact_R}, 
\begin{align}
&h(R(e_1,e_1)e_1,e_1)
=\sqrt{-1}h(R(e_1,e_1)e_1,J_ue_1)
=h(R(e_1,J_ue_1)e_1,e_1)=0, 
\nonumber
\\
&h(R(e_1,J_ue_1)e_1,J_ue_1)
=-h(R(e_1,J_ue_1)J_ue_1,e_1)
=-\kappa(u),  
\nonumber
\end{align}
which shows \eqref{eq:s1111}.
\par
Next, we compute 
\eqref{eq:r6201} and 
\eqref{eq:r6202}.
We write $Q=Q_1$ for simplicity. 
As for \eqref{eq:r6201} in this setting, it follows that 
$
f_{1,1}^1(Q,\p_xQ)
=
-(b+2c)(S_1+S_2)
+b(S_3+S_4)
$
where 
\begin{align}
S_1&=
S_{1,1,1}^{1}\overline{S_{1,1,1}^1}
\overline{\p_xQ}Q
\overline{Q}Q
=\frac{(\kappa(u))^2}{4}
\overline{\p_xQ}Q
|Q|^2, 
\nonumber
\\
S_2
&=
S_{1,1,1}^{1}S_{1,1,1}^1
\p_xQ\overline{Q}
Q\overline{Q}
=\frac{(\kappa(u))^2}{4}
\p_xQ\overline{Q}
|Q|^2,
\nonumber
\\
S_3&=
S_{1,1,1}^{1}\overline{S_{1,1,1}^1}
\overline{Q}\p_xQ
\overline{Q}Q
=
\frac{(\kappa(u))^2}{4}
\p_xQ\overline{Q}
|Q|^2,
\nonumber
\\
S_4
&=
S_{1,1,1}^{1}S_{1,1,1}^1
Q\overline{\p_xQ}
Q\overline{Q}
=
\frac{(\kappa(u))^2}{4}
\overline{\p_xQ}Q
|Q|^2.
\nonumber
\end{align}
Therefore
$$
S_1+S_2=S_3+S_4=\frac{(\kappa(u))^2}{4}\p_x(|Q|^2)|Q|^2
=\frac{(\kappa(u))^2}{8}\p_x(|Q|^4). 
$$
This yields $f_{1,1}^1(Q,\p_xQ)=-\dfrac{c}{4}(\kappa(u))^2\p_x(|Q|^4)$, 
and thus   
\begin{align}
&\sum_{r=1}^{1}
\left(
\int_{-\infty}^x
f_{1,r}^1(Q,\p_xQ)(t,y)\,dy
\right)Q_r
\nonumber\\
&=
-\frac{c}{4}
\left(
\int_{-\infty}^{x}
\left((\kappa(u))^2\p_x(|Q|^4)\right)(t,y)
dy
\right)
Q
\nonumber
\\
&=
-\frac{c}{4}(\kappa(u))^2
|Q|^4Q
+\frac{c}{2}
\left(
\int_{-\infty}^{x}
\left(\kappa(u)(\kappa(u))_x|Q|^4\right)(t,y)
dy
\right)
Q.
\label{eq:05203}
\end{align}
As for \eqref{eq:r6202}, it follows from the definition and \eqref{eq:s1111} 
\begin{align}
&f_{1,1}^2(Q,\p_xQ)
\nonumber
\\&=
-a\p_x^2(S_{1,1,1}^{1}) 
  (\p_xQ\overline{Q}+Q\overline{\p_xQ})
-3a 
  \p_x(S_{1,1,1}^{1})
  \p_xQ\overline{\p_xQ}
+\lambda
 \p_x(S_{1,1,1}^{1})
 Q\overline{Q}
\nonumber
\\
&=
-\frac{a}{2}
  (\kappa(u))_{xx} 
  (\p_xQ\overline{Q}+Q\overline{\p_xQ})
-\frac{3a}{2} 
  (\kappa(u))_x
  |\p_xQ|^2
+\frac{\lambda}{2}
 (\kappa(u))_x
 |Q|^2.
\label{eq:052032}
\end{align}
Substituting \eqref{eq:s1111}-\eqref{eq:052032} 
into \eqref{eq:r51251}, 
we obtain 
\begin{align}
&\sqrt{-1}\p_tQ+(a\p_x^4+\lambda \p_x^2)Q
\nonumber
\\
&=
   \frac{d_1}{2}
    \kappa(u)
    \p_x^2Q|Q|^2
    +\frac{d_2}{2}
        \kappa(u)
        \overline{\p_x^2Q}Q^2
    +\frac{d_3}{2}
        \kappa(u)
        |\p_xQ|^2Q
    +\frac{d_4}{2}
        \kappa(u)
        (\p_xQ)^2\overline{Q}
           \nonumber
            \\
            &\quad
            +\frac{d_5}{2}
                (\kappa(u))_x
                \p_xQ|Q|^2
                +\frac{d_6}{2}
                    (\kappa(u))_x
                Q^2\overline{\p_xQ}
    -\frac{\lambda}{2}
    \kappa(u)|Q|^2Q
    -\frac{c}{4}\kappa^2(u)
    |Q|^4Q
    \nonumber
    \\
    &\quad 
    +\frac{1}{2}
    \left(
    \int_{-\infty}^{x}
      \mathcal{W}_1(Q,\p_xQ)
      (t,y)
    dy
    \right)
    Q, 
    \label{eq:ns51261}
\end{align}
where 
\begin{align}
\mathcal{W}_1(Q,\p_xQ)&=
-a (\kappa(u))_{xx} 
  (\p_xQ\overline{Q}+Q\overline{\p_xQ})
-3a  (\kappa(u))_x|\p_xQ|^2
\nonumber
\\
&\quad
+c\,\kappa(u)(\kappa(u))_x|Q|^4
+\lambda\, (\kappa(u))_x|Q|^2.
\nonumber
\end{align}
\begin{remark}
\label{remark:rs} 
If the Gaussian curvature of $(N,J,h)$ is constant, 
then the nonlocal term in \eqref{eq:ns51261} vanishes, 
and it is easy to check that \eqref{eq:ns51261} under the setting \eqref{eq:11021}
actually coincides with \eqref{eq:DZ} which is transformed from \eqref{eq:bibi} in \cite{DZ2021}. 
This is natural because our orthonormal frame for $n=1$ is essentially the same as that used in \cite{DZ2021} and because
the constancy of the sectional curvature on $(N,J,h)$ ensures $\nabla R=0$. 
In contrast, without the constancy of the curvature,  
\eqref{eq:ns51261} under the setting \eqref{eq:11021} 
includes the nonlocal term  
and does not coincide with \eqref{eq:DZ}, even though 
we rewrite the nonlocal term 
by using the fundamental theorem of calculus. 
This is not strange, because the definitions of \eqref{eq:pde} and \eqref{eq:bibi} for curve flows are originally not the same unless $\nabla R=0$.   
\end{remark}
\subsection{Example~(2).}
\label{subsection:ex2}
Let $(N,J,h)$ be a compact K\"ahler manifold of complex dimension $n$ 
with constant holomorphic sectional curvature $K$. 
It is known that 
\begin{align}
R(U,V)W&=\dfrac{K}{4}
\biggl\{
h(V,W)U-h(U,W)V
+h(U,J_uW)J_uV
\nonumber
\\
&\qquad \qquad \qquad
-h(V,J_uW)J_uU
+2h(U,J_uV)J_uW
\biggr\}
\label{eq:nawa2}
\end{align}
for any $U,V,W\in \Gamma(u^{-1}TN)$, 
and $\nabla R=0$ holds.
In particular, 
\begin{align}
R(e_p,e_{q})e_r
&=
\frac{K}{4}(\delta_{q r}e_p-\delta_{p r}e_{q}), 
\nonumber
\\
R(e_p,J_ue_{q})e_r
&=
-\frac{K}{4}(\delta_{pr}J_ue_{q}
+\delta_{q r}J_ue_p+2\delta_{p q}J_ue_{r}) 
\nonumber
\end{align}
hold for all $p,q,r\in \left\{1,\ldots,n\right\}$. 
Applying them to \eqref{eq:61}-\eqref{eq:71}, 
we see
\begin{align}
(\operatorname{Re}
[R_{p,q,r}^{A,j}]
&=)
h(R(e_p,e_q)r_r,e_j)
=\frac{K}{4}(\delta_{q r}\delta_{p j}-\delta_{p r}\delta_{q j}),
\nonumber
\\
(\operatorname{Im}
[R_{p,q,r}^{A,j}]
&=)
h(R(e_p,e_q)r_r,J_ue_j)
=0, 
\nonumber
\\
(\operatorname{Re}
[R_{p,q,r}^{B,j}]
&=)
h(R(e_p,J_ue_q)r_r,e_j)
=0, 
\nonumber
\\
(\operatorname{Im}
[R_{p,q,r}^{B,j}]
&=)
h(R(e_p,J_ue_q)r_r,J_ue_j)
=
-\frac{K}{4}(\delta_{pr}\delta_{q j}
+\delta_{q r}\delta_{pj}
+2\delta_{p q}\delta_{rj})
\nonumber
\end{align}
for all $p,q,r,j\in \left\{1,\ldots,n\right\}$. 
Substituting them into \eqref{eq:add11171}, we obtain
\begin{align}
S_{p,q,r}^j
&=
\frac{K}{8}(\delta_{qr}\delta_{pj}-\delta_{pr}\delta_{qj})
+\frac{K}{8}(\delta_{pr}\delta_{qj}+\delta_{qr}\delta_{pj}
+2\delta_{pq}\delta_{rj})
\nonumber
\\
&=
\frac{K}{4}(\delta_{qr}\delta_{pj}+\delta_{pq}\delta_{rj})
\quad (\in \RR)
\qquad 
(\forall p,q,r,j\in \left\{1,\ldots,n\right\}).
\label{eq:Spqrj2}
\end{align}
From this, we also see $\p_x(S_{p,q,r}^j)\equiv 0$. 
This does not conflict with Proposition~\ref{proposition:Rloc}.
\par 
We use \eqref{eq:Spqrj2} to compute the right hand side of \eqref{eq:r51251} 
with \eqref{eq:r6201} and \eqref{eq:r6202}.  
It follows that 
\begin{align}
\sum_{p,q,r=1}^n
S_{p,q,r}^j
\p_x^2Q_p\overline{Q_q}Q_r
&=
\frac{K}{4}
\sum_{p,q,r=1}^n
(\delta_{qr}\delta_{pj}+\delta_{pq}\delta_{rj})
\p_x^2Q_p\overline{Q_q}Q_r
\nonumber
\\
&=
\frac{K}{4}
\sum_{q=1}^n\sum_{r=1}^n\delta_{qr}\p_x^2Q_j\overline{Q_q}Q_r
+
\frac{K}{4}
\sum_{p=1}^n\sum_{q=1}^n
\delta_{pq}
\p_x^2Q_p\overline{Q_q}Q_j
\nonumber
\\
&=
\frac{K}{4}|Q|^2\p_x^2Q_j
+
\frac{K}{4}
\sum_{p=1}^n
\p_x^2Q_p\overline{Q_p}Q_j, 
\nonumber
\\
\sum_{p,q,r=1}^n
S_{p,q,r}^j
\p_xQ_p\overline{\p_xQ_q}Q_r
&=
\frac{K}{4}
\sum_{p,q,r=1}^n
(\delta_{qr}\delta_{pj}+\delta_{pq}\delta_{rj})
\p_xQ_p\overline{\p_xQ_q}Q_r
\nonumber
\\
&=
\frac{K}{4}\sum_{q=1}^n
\overline{\p_xQ_q}Q_q
\p_xQ_j
+\frac{K}{4}|\p_xQ|^2Q_j,
\nonumber
\\
\sum_{p,q,r=1}^n
S_{p,q,r}^j
\p_xQ_p\overline{Q_q}\p_xQ_r
&=
\frac{K}{4}
\sum_{p,q,r=1}^n
(\delta_{qr}\delta_{pj}+\delta_{pq}\delta_{rj})
\p_xQ_p\overline{Q_q}\p_xQ_r
\nonumber
\\
&=
\frac{K}{2}\sum_{q=1}^n
\p_xQ_q\overline{Q_q}
\p_xQ_j,
\nonumber
\\
\sum_{p,q,r=1}^n
S_{p,q,r}^j
Q_p\overline{\p_x^2Q_q}Q_r
&=
\frac{K}{2}
\sum_{q=1}^n
\overline{\p_x^2Q_q}Q_q
Q_j,
\nonumber
\\
\sum_{p,q,r=1}^n
S_{p,q,r}^j
Q_p\overline{Q_q}Q_r
&=
\frac{K}{2}|Q|^2Q_j.
\nonumber
\end{align}
Since $\p_x(S_{p,q,r}^j)=0$ for all $p,q,r,j\in \left\{1,\ldots,n\right\}$, 
it is immediate to see 
$$
\sum_{r=1}^{n}
\left(
\int_{-\infty}^x
f_{j,r}^2(Q,\p_xQ)(t,y)\,dy
\right)Q_r
=0.
$$ 
On the other hand, by a lengthy computation, we can show 
\begin{align}
&\sum_{r=1}^{n}
\left(
\int_{-\infty}^x
f_{j,r}^1(Q,\p_xQ)(t,y)\,dy
\right)Q_r
\nonumber
\\
&=
-\frac{(b+4c)K^2}{16}|Q|^4Q_j
+
\frac{bK^2}{8}
\sum_{r=1}^n
\left(
\int_{-\infty}^{x}
(Q_j\overline{Q_r}\p_x(|Q|^2))(t,y)
dy
\right)
Q_r.
\label{eq:512131}
\end{align}
We demonstrate the computation here. 
Recall that $f_{j,r}^1(Q,\p_xQ)
=
-(b+2c)(S_1+S_2)
+b(S_3+S_4)$, 
where $S_1,\ldots,S_4$ are given by 
\eqref{eq:S1}-\eqref{eq:S4}. 
Obtaining the exact expressions of 
$S_{p,q,r}^jS_{\alpha,\beta,\gamma}^q$
and 
$S_{p,q,r}^jS_{\alpha,\beta,\gamma}^p$ is sufficient to
compute $S_1,\ldots,S_4$, since 
$\overline{S_{p,q,r}^j}=S_{p,q,r}^j$ holds for any $p,q,r,j$ 
in this example. 
(We need to compute them separately, since $S_{p,q,r}^j\neq S_{q,p,r}^j$.)
The result of computation is as follows: 
\begin{align}
S_{p,q,r}^jS_{\alpha,\beta,\gamma}^q
&=\frac{K^2}{16}
(\delta_{qr}\delta_{pj}+\delta_{pq}\delta_{rj})
(\delta_{\beta \gamma}\delta_{\alpha q}+\delta_{\alpha \beta}\delta_{\gamma q})
\nonumber
\\
&=
\frac{K^2}{16}
(\delta_{qr}\delta_{pj}\delta_{\beta \gamma}\delta_{\alpha q}
+\delta_{qr}\delta_{pj}\delta_{\alpha \beta}\delta_{\gamma q}
+\delta_{pq}\delta_{rj}\delta_{\beta \gamma}\delta_{\alpha q}
+\delta_{pq}\delta_{rj}\delta_{\alpha \beta}\delta_{\gamma q}), 
\nonumber
\\
S_{p,q,r}^jS_{\alpha,\beta,\gamma}^p
&=\frac{K^2}{16}
(\delta_{qr}\delta_{pj}+\delta_{pq}\delta_{rj})
(\delta_{\beta \gamma}\delta_{\alpha p}+\delta_{\alpha \beta}\delta_{\gamma p})
\nonumber
\\
&=
\frac{K^2}{16}
(\delta_{qr}\delta_{pj}\delta_{\beta \gamma}\delta_{\alpha p}
+\delta_{qr}\delta_{pj}\delta_{\alpha \beta}\delta_{\gamma p}
+\delta_{pq}\delta_{rj}\delta_{\beta \gamma}\delta_{\alpha p}
+\delta_{pq}\delta_{rj}\delta_{\alpha \beta}\delta_{\gamma p}).
\nonumber
\end{align}
Hence, 
\begin{align}
S_1&=
\frac{K^2}{16}
\Biggl(
\sum_{\beta=1}^{n}
\overline{\p_xQ_r}Q_{\beta}\overline{Q_{\beta}}Q_j
+
\sum_{\beta=1}^{n}
\overline{\p_xQ_{\beta}}Q_{\beta}\overline{Q_{r}}Q_j
\nonumber
\\
&\qquad
\phantom{-i\frac{K^2}{4}}
+\delta_{jr}
\sum_{\alpha=1}^{n}\sum_{\beta=1}^{n}
\overline{\p_xQ_{\alpha}}Q_{\beta}\overline{Q_{\beta}}Q_{\alpha}
+\delta_{jr}
\sum_{\beta=1}^{n}\sum_{\gamma=1}^{n}
\overline{\p_xQ_{\beta}}Q_{\beta}\overline{Q_{\gamma}}Q_{\gamma}
\Biggl)
\nonumber
\\
&=
\frac{K^2}{16}
\Biggl(
\overline{\p_xQ_r}Q_j|Q|^2
+
\overline{Q_{r}}Q_j
\sum_{\beta=1}^{n}
\overline{\p_xQ_{\beta}}Q_{\beta}
+2\delta_{jr}
\sum_{\alpha=1}^{n}
\overline{\p_xQ_{\alpha}}Q_{\alpha}|Q|^2
\Biggl), 
\label{eq:51281}
\\
S_3&=
\frac{K^2}{16}
\Biggl(
\sum_{\beta=1}^{n}
\overline{Q_r}\p_xQ_{\beta}\overline{Q_{\beta}}Q_j
+
\sum_{\beta=1}^{n}
\overline{Q_{\beta}}\p_xQ_{\beta}\overline{Q_{r}}Q_j
\nonumber
\\
&\qquad
\phantom{-i\frac{K^2}{4}}
+\delta_{jr}
\sum_{\alpha=1}^{n}\sum_{\beta=1}^{n}
\overline{Q_{\alpha}}\p_xQ_{\beta}\overline{Q_{\beta}}Q_{\alpha}
+\delta_{jr}
\sum_{\beta=1}^{n}\sum_{\gamma=1}^{n}
\overline{Q_{\beta}}\p_xQ_{\beta}\overline{Q_{\gamma}}Q_{\gamma}
\Biggl)
\nonumber
\\
&=
\frac{K^2}{16}
\Biggl(
2\overline{Q_r}Q_j
\sum_{\beta=1}^{n}
\p_xQ_{\beta}\overline{Q_{\beta}}
+2\delta_{jr}
\sum_{\alpha=1}^{n}
\p_xQ_{\alpha}\overline{Q_{\alpha}}|Q|^2
\Biggl), 
\label{eq:51282}
\\
S_2&=
\frac{K^2}{16}
\Biggl(
\sum_{\beta=1}^{n}
\p_xQ_j\overline{Q_{\beta}}Q_{\beta}\overline{Q_r}
+
\sum_{\beta=1}^{n}
\p_xQ_{\beta}\overline{Q_{\beta}}Q_j\overline{Q_r}
\nonumber
\\
&\qquad
\phantom{-i\frac{K^2}{4}}
+\delta_{jr}
\sum_{\alpha=1}^{n}\sum_{\beta=1}^{n}
\p_xQ_{\alpha}\overline{Q_{\beta}}Q_{\beta}\overline{Q_{\alpha}}
+\delta_{jr}
\sum_{\beta=1}^{n}\sum_{\gamma=1}^{n}
\p_xQ_{\beta}\overline{Q_{\beta}}Q_{\gamma}\overline{Q_{\gamma}}
\Biggl)
\nonumber
\\
&=
\frac{K^2}{16}
\Biggl(
\p_xQ_j\overline{Q_r}|Q|^2
+
Q_j\overline{Q_{r}}
\sum_{\beta=1}^{n}
\p_xQ_{\beta}\overline{Q_{\beta}}
+2\delta_{jr}
\sum_{\alpha=1}^{n}
\p_xQ_{\alpha}\overline{Q_{\alpha}}|Q|^2
\Biggl), 
\label{eq:51291}
\\
S_4&=
\frac{K^2}{16}
\Biggl(
\sum_{\beta=1}^{n}
Q_j\overline{\p_xQ_{\beta}}Q_{\beta}\overline{Q_r}
+
\sum_{\beta=1}^{n}
Q_{\beta}\overline{\p_xQ_{\beta}}Q_j\overline{Q_r}
\nonumber
\\
&\qquad
\phantom{-i\frac{K^2}{4}}
+\delta_{jr}
\sum_{\alpha=1}^{n}\sum_{\beta=1}^{n}
Q_{\alpha}\overline{\p_xQ_{\beta}}Q_{\beta}\overline{Q_{\alpha}}
+\delta_{jr}
\sum_{\beta=1}^{n}\sum_{\gamma=1}^{n}
Q_{\beta}\overline{\p_xQ_{\beta}}Q_{\gamma}\overline{Q_{\gamma}}
\Biggl)
\nonumber
\\
&=
\frac{K^2}{16}
\Biggl(
2\overline{Q_r}Q_j
\sum_{\beta=1}^{n}
\overline{\p_xQ_{\beta}}Q_{\beta}
+2\delta_{jr}
\sum_{\alpha=1}^{n}
\overline{\p_xQ_{\alpha}}Q_{\alpha}|Q|^2
\Biggl).
\label{eq:51210}
\end{align}
Combining them, we have
\begin{align}
S_1+S_2
&=
\frac{K^2}{16}
\left\{
\p_x(Q_j\overline{Q_r})|Q|^2
+
Q_j\overline{Q_{r}}\p_x(|Q|^2)
+2\delta_{jr}
|Q|^2\p_x(|Q|^2)
\right\}
\nonumber
\\
&=
\frac{K^2}{16}
\left\{
\p_x(Q_j\overline{Q_r}|Q|^2)
+\delta_{jr}
\p_x(|Q|^4)
\right\}
\nonumber
\\
&=
\p_x\left\{
\frac{K^2}{16}
\left(
Q_j\overline{Q_r}|Q|^2
+\delta_{jr}|Q|^4
\right)
\right\}, 
\nonumber
\\
S_3+S_4
&=
\frac{K^2}{16}
\Biggl(
2Q_j\overline{Q_r}
\p_x(|Q|^2)
+
2\delta_{jr}
|Q|^2\p_x(|Q|^2)
\Biggl)
\nonumber
\\
&=
\frac{K^2}{16}
\Biggl(
2Q_j\overline{Q_r}
\p_x(|Q|^2)
+\delta_{jr}
\p_x(|Q|^4)
\Biggl)
\nonumber
\\
&=
\p_x\left\{
\frac{K^2}{16}
\delta_{jr}|Q|^4
\right\}
+\frac{K^2}{8}
Q_j\overline{Q_r}
\p_x(|Q|^2).
\nonumber
\end{align}
It follows that 
\begin{align}
\sum_{r=1}^{n}
\left(
\int_{-\infty}^x
(S_1+S_2)(t,y)\,dy
\right)Q_r
&=
\frac{K^2}{16}
\sum_{r=1}^{n}
(Q_j\overline{Q_r}|Q|^2
+\delta_{jr}|Q|^4)
Q_r
\nonumber
\\
&=
\frac{K^2}{8}|Q|^4Q_j, 
\label{eq:512111}
\end{align}
\begin{align}
&\sum_{r=1}^{n}
\left(
\int_{-\infty}^x
(S_3+S_4)(t,y)\,dy
\right)Q_r
\nonumber
\\
&=
\frac{K^2}{16}\sum_{r=1}^{n}
\delta_{jr}|Q|^4Q_r
+\frac{K^2}{8}
\sum_{r=1}^{n}
\left(
\int_{-\infty}^x
\left(Q_j\overline{Q_r}
\p_x(|Q|^2)\right)(t,y)
\,dy
\right)Q_r
\nonumber
\\
&=
\frac{K^2}{16}|Q|^4Q_j
+\frac{K^2}{8}
\sum_{r=1}^{n}
\left(
\int_{-\infty}^x
\left(Q_j\overline{Q_r}
\p_x(|Q|^2)
\right)(t,y)
\,dy
\right)Q_r.
\label{eq:512121}
\end{align}
Combining  
\eqref{eq:512111} and \eqref{eq:512121},  
we get the desired \eqref{eq:512131}. 
\par 
Finally, substituting the result of computation into \eqref{eq:r51251}, 
we arrived at 
\begin{align}
&\sqrt{-1}\p_tQ_j+(a\p_x^4+\lambda \p_x^2)Q_j
\nonumber
\\
&=\frac{K}{4}
   d_1
   \left(
   |Q|^2\p_x^2Q_j
   +
   \sum_{r=1}^n
   \p_x^2Q_r\overline{Q_r}Q_j
   \right)
       +\frac{K}{2}
       d_2
       \sum_{r=1}^n
       \overline{\p_x^2Q_r}Q_r
       Q_j
    \nonumber
   \\
  &\quad
    +\frac{K}{4}
    d_3
    \left(
    \sum_{r=1}^n
    \overline{\p_xQ_r}Q_r
    \p_xQ_j
    +|\p_xQ|^2Q_j
    \right)
    +\frac{K}{2}
    d_4
    \sum_{r=1}^n
    \p_xQ_r\overline{Q_r}
    \p_xQ_j
    -\frac{K}{2}\lambda
    |Q|^2Q_j
    \nonumber
    \\
    &\quad
  -\frac{(b+4c)K^2}{16}|Q|^4Q_j
  +\frac{bK^2}{8}
  \sum_{r=1}^n
  \left(
  \int_{-\infty}^{x}
  (Q_j\overline{Q_r}\p_x(|Q|^2))(t,y)
  dy
  \right)
  Q_r.  
  \label{eq:512132}
\end{align}
\begin{remark}
\label{remark:0802}
If $n=1$, then the final nonlocal term 
of the right hand of \eqref{eq:512132} simply becomes a local one, 
in that 
\begin{align}
\left(
    \int_{-\infty}^{x}
    (Q_1\overline{Q_1}\p_x(|Q_1|^2))(t,y)
    dy
    \right)
    Q_1
&=\frac{1}{2}
\left(
    \int_{-\infty}^{x}
    (\p_x(|Q_1|^4))(t,y)
    dy
    \right)
    Q_1
\nonumber
\\
&=
\frac{1}{2}|Q_1|^4Q_1. 
\nonumber
\end{align}
In this case, the derived equation   
\eqref{eq:512132} for $Q=Q_1$ turns out to coincide with \eqref{eq:ns51261} where $\kappa(u)\equiv K$.  
This does not conflict with the fact that 
the holomorphic sectional curvature for the Riemann surface
coincides with the Gaussian curvature.
\end{remark}
\section{Example (3)}
\label{section:Example3}
We investigate the case $(N,J,h)$ is a compact complex Grassmannian as a Hermitian 
symmetric space. 
Looking at many famous literature(e.g., 
\cite{Arvanitoyeorgos}, 
\cite{helgason}, 
\cite{KN}, 
\cite{oneill}, 
\cite{petersen} 
), 
there are some models of complex Grassmannians. 
To avoid the confusion, following \cite{BZA, Dimitric} mainly, 
we start from stating the setting we use in this paper.
\subsection{Setting of complex Grassmannians}
\label{subsection:PreGr}
Fix integers $n_0, k_0$ with $1\leqslant k_0<n_0$, and set $m_0=n_0-k_0$.
Let $N$ be the complex Grassmannian $G_{n_0, k_0}$ defined as the set of 
all $k_0$-dimensional linear subspaces through the origin of 
the complex Euclidean space $\mathbb{C}^{n_0}$. 
With a slight abuse of notation, this can be identified with the 
set of Hermitian rank-$k_0$ projectors:  
\begin{equation}
G_{n_0,k_0}
=
\left\{
A\in H(n_0)\mid 
A^2=A \ \text{and} \ \text{rank}A=k_0 
\right\}, 
\label{eq:Gnk}
\end{equation}
where 
$H(n_0)=\left\{
A\in \mathcal{M}_{n_0\times n_0}
\mid 
A^{*}=A
\right\}$ being the set of Hermitian matrices. 
($\mathcal{M}_{n_0\times n_0}$ denotes the space of 
$n_0\times n_0$ complex-matrices and  
$A^{*}=\overline{A}^{t}$ denotes the conjugate transpose of $A$.)
\par 
Set $U(n_0)=\left\{
B\in \mathcal{M}_{n_0\times n_0}\mid 
B^{*}B=BB^{*}=I
\right\}$ to denote the unitary group of degree $n_0$.  
(In this section, the identity matrix of size $n_0$ is denoted by $I$ and 
the identity matrices of size $k\in \left\{1,\ldots, n_0-1\right\}$ 
are by $I_k$.)
Then $U(n_0)$ is a compact Lie group and  
the Lie algebra consists of the set of skew-Hermitian matrices: 
$$
\mathfrak{u}(n_0):=T_{I}U(n_0)
=
\left\{
\Omega\in \mathcal{M}_{n_0\times n_0}\mid 
\Omega^{*}=-\Omega
\right\}.
$$
Let us define an isometric group action of $U(n_0)$ on $H(n_0)$ by 
$$
\Phi: U(n_0)\times H(n_0)\to H(n_0), 
\quad (B,H)\mapsto BHB^{*}, 
$$
and take 
$$
A_0:=\begin{pmatrix}
I_{k_0} & 0 \\
0 & 0 \\
\end{pmatrix}
\in G_{n_0,k_0}
$$
as the origin of $G_{n_0,k_0}$. 
(Here and hereafter, all matrices in $\mathcal{M}_{n_0\times n_0}$ 
are written as a block form where the submatrix in the upper left corner is of 
order $k_0\times k_0$, and the four zero-matrices as the submatrix are simply 
denoted by $0$.)   
The same argument as that in \cite[Section~2.1]{BZA} shows 
$G_{n_0,k_0}=\Phi(U(n_0),A_0)$ being the orbit of $A_0$ under $\Phi$. 
Moreover, $\Phi$ is a transitive action of $U(n_0)$ on $G_{n_0,k_0}$ 
and the isotropy group at $A_0$ is  
$$
\left\{
\begin{pmatrix}
K_1 & 0 \\
0 & K_2 \\
\end{pmatrix} \middle|
K_1\in U(k_0), K_2\in U(m_0)
\right\}
\cong 
U(k_0)\times U(m_0). 
$$
In fact, 
$G_{n_0,k_0}$ is diffeomorphic to $U(n_0)/(U(k_0)\times U(m_0))$ 
via the canonical map
$G_{n_0,k_0}\ni \Phi(H,A_0)
\mapsto 
H(U(k_0)\times U(m_0))
\in U(n_0)/(U(k_0)\times U(m_0))
$
and is an embedded submanifold of $H(n_0)$ satisfying  
$$
\dim_{\RR}G_{n_0,k_0}
=
\dim_{\RR}U(n_0)
-\dim_{\RR}(U(k_0)\times U(m_0))=2k_0m_0
$$
which implies $n=\dim_{\mathbb{C}}G_{n_0,k_0}=k_0m_0$. 
In addition, the involution which is given by 
$$
\sigma:U(n_0)\to U(n_0),
\quad 
B\mapsto 
\begin{pmatrix}
I_{k_0} & 0 \\
0 & -I_{m_0} \\
\end{pmatrix}
B
\begin{pmatrix}
I_{k_0} & 0 \\
0 & -I_{m_0} \\
\end{pmatrix}^{-1}
$$
makes $U(n_0)/(U(k_0)\times U(m_0))$  
symmetric.  
\par 
Next,  
set $\pi=\Phi(\cdot,A_0)$, that is, 
$$
\pi:U(n_0)\to G_{n_0,k_0}, 
\quad 
B\mapsto BA_0B^{*}.
$$  
For any $A\in G_{n_0,k_0}$, 
there exists $B\in U(n_0)$ such that $A=\pi(B)$ and  
the tangent space of $G_{n_0,k_0}$ at $A\in G_{n_0,k_0}$
can be expressed by     
\begin{align}
T_AG_{n_0,k_0}
&=
\left\{
B
\begin{pmatrix}
0 & V \\
V^{*} & 0 \\
\end{pmatrix} 
B^{*}
\middle| 
V\in \mathcal{M}_{k_0\times m_0}
\right\}. 
\label{eq:TGr1}
\end{align}
This follows from the same argument as that in \cite[Section~2.1]{BZA}. 
To see this more concretely, note that 
the tangent space of $U(n_0)$ at an arbitrary $B\in U(n_0)$ 
is given by the left translation of $\mathfrak{u}(n_0)$, 
\begin{equation}
T_{B}U(n_0)
=
\left\{
B\Omega\in \mathcal{M}_{n_0\times n_0}
\mid 
\Omega\in \mathfrak{u}(n_0)
\right\}.
\label{eq:TU}
\end{equation}
It turns out that the differential $(d\pi)_B: T_BU(n_0)\to T_{\pi(B)}G_{n_0,k_0}$ 
at $B\in U(n_0)$ 
is given by
\begin{align}
&(d\pi)_B
\left(
B\begin{pmatrix}
\omega_{11} & -\omega_{12} \\
(\omega_{12})^{*} & \omega_{22} \\
\end{pmatrix} 
\right)
=
B\begin{pmatrix}
0 & \omega_{12} \\
(\omega_{12})^{*} & 0 \\
\end{pmatrix} 
B^{*}
\label{eq:237011}
\end{align}
for all 
$\begin{pmatrix}
\omega_{11} & -\omega_{12} \\
(\omega_{12})^{*} & \omega_{22} \\
\end{pmatrix}\in \mathfrak{u}(n_0)$.
Since $\pi$ is submersion, \eqref{eq:TGr1} is obtained.
\par 
The complex structure $J_A$ at the point $A=\pi(B)\in G_{n_0,k_0}$ is given by 
\begin{equation}
J_A:T_AG_{n_0,k_0}\to T_AG_{n_0,k_0}, 
\quad 
B
\begin{pmatrix}
0 & V \\
V^{*} & 0 \\
\end{pmatrix} 
B^{*}
\mapsto 
B
\begin{pmatrix}
0 & \sqrt{-1}V \\
(\sqrt{-1}V)^{*} & 0 \\
\end{pmatrix} 
B^{*}. 
\label{eq:comst}
\end{equation}
\par 
The Riemannian metric on $G_{n_0,k_0}$ is 
taken to be $U(n_0)$-invariant by the following 
standard manner: 
We take an Ad-invariant metric $\langle\cdot,\cdot\rangle$ on $\mathfrak{u}(n_0)$ 
which is defined by 
\begin{equation}
\langle \Omega_1,\Omega_2\rangle
=\dfrac{1}{2}\mathrm{tr}\left(
\Omega_1 (\Omega_2)^{*}
\right), 
\label{eq:Ad}
\end{equation}
and define $\langle \cdot, \cdot \rangle_B$ for each $B\in U(n_0)$ by
\begin{equation}
\langle
B\Omega_1, B\Omega_2
\rangle_B
=
\langle \Omega_1,\Omega_2\rangle
\label{eq:metU}
\end{equation}
for any $B\Omega_1, B\Omega_2\in T_BU(n_0)$, 
which gives a bi-invariant Riemannian metric on $U(n_0)$. 
Let $A=\pi(B)\in G_{n_0,k_0}$ 
and let $\Delta_i\in T_AG_{n_0,k_0}$ ($i=1, 2$). 
By \eqref{eq:TGr1} and \eqref{eq:237011},  
there exist $\Omega_i\in \mathfrak{u}(n_0)$ ($i=1, 2$) such that
$$
\Delta_i
=
(d\pi)_B\left(
B\Omega_i \right), 
\quad 
\Omega_i=\begin{pmatrix}
0 & -\omega_i \\
(\omega_i)^{*} & 0 \\
\end{pmatrix},
\quad 
\omega_i\in 
\mathcal{M}_{k_0\times m_0}
\quad 
(i=1,2).
$$ 
We define $h_A(\Delta_1,\Delta_2)$ by 
\begin{align}
h_A(\Delta_1,\Delta_2)
&=
\langle
B\Omega_1, B\Omega_2
\rangle_B
\left(
=
\frac{1}{2}\mathrm{tr}\left(\Omega_1(\Omega_2)^{*}\right)
\right).
\nonumber
\end{align}
Then, $h=\left\{h_A\right\}$ is a $U(n_0)$-invariant Riemannian metric on $G_{n_0,k_0}$.
Furthermore, by the fundamental properties of the trace for complex-component matrices,  
\begin{align}
h_A(\Delta_1,\Delta_2)
&=
\frac{1}{2}\mathrm{tr}\left(\begin{pmatrix}
0 & -\omega_1 \\
(\omega_1)^{*} & 0 \\
\end{pmatrix}
\begin{pmatrix}
0 & -\omega_2 \\
(\omega_2)^{*} & 0 \\
\end{pmatrix}^{*}\right)
=
\Re\left[
\mathrm{tr}(\omega_1(\omega_2)^{*})
\right].
\label{eq:metN}
\end{align}
\begin{remark}
The metric $h$ is the same as that used in, e.g., 
\cite{KN,oneill,petersen}. 
It is also the same as that in \cite{DWW2003,SW2011,DW2018}
up to a constant multiple. 
To investigate the expression of \eqref{eq:pde}, 
we do not need to be aware of the difference of the constant multiple, 
because the Levi-Civita connection and $R$ as a $(1,3) $-tensor 
used to formulate \eqref{eq:pde}
are invariant  under the homothetic change $h\to ch$($c$ is a positive constant). 
In other words,   
although the holomorphic sectional curvature is multiplied by $1/c$, 
the derived final expression \eqref{eq:r51251} is not changed. 
\end{remark}
\begin{remark}
\label{remark:decom}
For each $B\in U(n_0)$, 
$T_BU(n_0)$ can be decomposed into the kernel of the differential $(d\pi)_B$ and 
the orthogonal complement with respect to $\langle\cdot,\cdot\rangle_B$: 
\begin{align}
T_BU(n_0)&=
\mathfrak{k}_B+\mathfrak{m}_B, 
\label{eq:236271}
\end{align}
where 
\begin{align}
\mathfrak{k}_B&:=\mathrm{Ker}((d\pi)_B)
=\left\{
B\begin{pmatrix}
\omega_{11} & 0 \\
0 & \omega_{22} \\
\end{pmatrix} 
\middle|
\omega_{11}\in \mathfrak{u}(k_0), 
\omega_{22}\in \mathfrak{u}(m_0)
\right\},
\label{eq:236272}
\\
\mathfrak{m}_B&:=(\mathfrak{k}_B)^{\perp}
=
\left\{
B\begin{pmatrix}
0 & \omega_{12} \\
-(\omega_{12})^{*} & 0 \\
\end{pmatrix} 
\middle|
\omega_{12}\in 
\mathcal{M}_{k_0\times m_0}
\right\}.
\label{eq:236273}
\end{align}
Comparing \eqref{eq:TGr1} and  \eqref{eq:236273}, 
we see that 
the tangent space $T_AG_{n_0,k_0}$ at $A=\pi(B)$ can be identified with 
$\mathfrak{m}_B$ by the map 
$((d\pi)_B)\lvert_{\mathfrak{m}_B}:\mathfrak{m}_B\to T_AG_{n_0,k_0}$.  
In addition, a direct computation using \eqref{eq:236272} and \eqref{eq:236273} 
easily shows that $\mathfrak{u}(n_0)$ is a symmetric Lie algebra, that is,  
\begin{align}
&\left[
\mathfrak{k}_I, 
\mathfrak{k}_I
\right]
\subset 
\mathfrak{k}_I, 
\quad
\left[
\mathfrak{k}_I, 
\mathfrak{m}_I
\right]
\subset 
\mathfrak{m}_I, 
\quad
\left[
\mathfrak{m}_I, 
\mathfrak{m}_I
\right]
\subset 
\mathfrak{k}_I.
\nonumber
\end{align}
This does not conflict with the fact that $U(n_0)/(U(k_0)\times U(m_0))$ is a symmetric space 
with involution $\sigma$(see, e,g, \cite[Proposition~6.4]{Arvanitoyeorgos}). 
\end{remark}
\begin{remark}
Some other equivalent expressions of the tangent space are known.  
For example, as is used in \cite{Dimitric}, 
the following implicit expression also holds:
\begin{align}
T_AG_{n_0,k_0}
&=
\left\{
H\in H(n_0)
\middle|
HA+AH=H
\right\}.
\label{eq:TGr2}
\end{align}
Indeed, by definition of $H(n_0)$ and \eqref{eq:Gnk}, 
the right hand side of \eqref{eq:TGr2} 
turns out to coincide with that of \eqref{eq:TGr1}.  
\end{remark}
The Riemann curvature tensor $R$ at $A\in G_{n_0,k_0}$ 
is given by the following: 
\begin{align}
(R(X,Y)Z)(A)
&=
\left[
\left[X,Y\right], Z
\right]
\quad 
(X,Y,Z\in T_AG_{n_0,k_0}), 
\label{eq:Rgr}
\end{align}   
where $\left[\cdot,\cdot\right]$ is the bracket of the matrices 
defined by $\left[X_1,X_2\right]=X_1X_2-X_2X_1$. 
The above expression is derived in \cite{Dimitric} by using \eqref{eq:TGr2}. 
(As is commented in \cite{Dimitric}, 
the expression of $R$ differs by a sign from the familiar one
 (e.g., \cite{Arvanitoyeorgos,FK,helgason,jost,KN,petersen}), 
since our tangent vectors are Hermitian rather than skew-Hermitian. 
See Remark~\ref{remark:signR} also.)   
Let $A=\pi(B)\in G_{n_0,k_0}$, 
and let $X,Y,Z,W\in T_AG_{n_0,k_0}$ be expressed by 
\begin{align}
X&=
B\begin{pmatrix}
0 & x \\
x^{*} & 0 \\
\end{pmatrix}B^{*}, 
Y=
B\begin{pmatrix}
0 & y \\
y^{*} & 0 \\
\end{pmatrix}B^{*}, 
Z=
B\begin{pmatrix}
0 & z \\
z^{*} & 0 \\
\end{pmatrix}B^{*}, 
W=
B\begin{pmatrix}
0 & w \\
w^{*} & 0 \\
\end{pmatrix}B^{*}, 
\label{eq:XYZ}
\end{align}
where $x,y,z,w\in \mathcal{M}_{k_0\times m_0}$. (The notation $x$ is not a 
variable of functions only here.) 
Then, the substitution of them into \eqref{eq:Rgr} shows
\begin{align}
(R(X,Y)Z)(A)
&=
B
\left[
\left[
\begin{pmatrix}
0 & x \\
x^{*} & 0 \\
\end{pmatrix}, 
\begin{pmatrix}
0 & y \\
y^{*} & 0 \\
\end{pmatrix}
\right], 
\begin{pmatrix}
0 & z \\
z^{*} & 0 \\
\end{pmatrix}
\right]
B^{*}
=
B
S
B^{*}, 
\label{eq:Rgr2}
\end{align}
where $S\in T_{A_0}G_{n_0,k_0}$ is determined by
\begin{equation}
S
=\begin{pmatrix}
0 & s \\
s^{*} & 0 \\
\end{pmatrix}, 
\quad
s=xy^{*}z-yx^{*}z-zx^{*}y+zy^{*}x
\ (\in \mathcal{M}_{k_0\times m_0}).
\label{eq:Rgr3}
\end{equation} 
This combined with \eqref{eq:metN} gives 
\begin{align}
h_A(R(X,Y)Z,W)
&=
\operatorname{Re}
(\mathrm{tr}(sw^{*}))
\nonumber
\\
&=
\operatorname{Re}
(\mathrm{tr}(xy^{*}zw^{*}-yx^{*}zw^{*}-zx^{*}yw^{*}+zy^{*}xw^{*})).
\label{eq:hR}
\end{align} 
\begin{remark}
\label{remark:signR}
The Riemann curvature tensor 
at $A_0$ can be rewritten  
via the identification 
$$
((d\pi)_{I})\lvert_{\mathfrak{m}_I}:\mathfrak{m}_I
\to T_{A_0}G_{n_0,k_0}, 
\quad 
\begin{pmatrix}
0 & -\omega \\
\omega^{*} & 0 \\
\end{pmatrix}
\mapsto 
\begin{pmatrix}
0 & \omega \\
\omega^{*} & 0 \\
\end{pmatrix}.
$$
To see this,  let $\iota:T_{A_0}G_{n_0,k_0}\to \mathfrak{m}_I$ be the inverse of $((d\pi)_{I})\lvert_{\mathfrak{m}_I}$. 
Then, for any  
$$
X=\begin{pmatrix}
0 & x \\
x^{*} & 0 \\
\end{pmatrix}, 
Y=\begin{pmatrix}
0 & y \\
y^{*} & 0 \\
\end{pmatrix}, 
Z=\begin{pmatrix}
0 & z \\
z^{*} & 0 \\
\end{pmatrix}\in T_{A_0}G_{n_0,k_0}, 
$$
the following relation holds:
\begin{align}
(R(X,Y)Z)(A_0)
&=
-((d\pi)_{I})\lvert_{\mathfrak{m}_I}
\left[
\left[\iota(X), \iota(Y)
\right], 
\iota(Z)
\right]. 
\label{eq:Rgr4}
\end{align}
This does not conflict with \cite{Arvanitoyeorgos,FK,helgason,jost,KN,petersen} 
where $R$ is expressed by 
``$R(X,Y)Z=-[[X,Y],Z]$" 
in the context of the right hand side of \eqref{eq:Rgr4} via the above identification.  
\end{remark}
\subsection{Computation of \eqref{eq:r51251}}
\label{subsection:comGr}
Let $N=G_{n_0,k_0}$ as above.
We compute \eqref{eq:r51251} in Theorem~\ref{theorem:momo}. 
Recall that $n=k_0m_0$ is the complex dimension of $G_{n_0,k_0}$.
For any $j\in \left\{1,\ldots,n\right\}$, 
there exists a unique pair of integers $j_1\in \left\{1, \ldots, k_0\right\}$ and 
$j_2\in \left\{1, \ldots, m_0\right\}$ such that 
$j=(j_2-1)k_0+j_1$. 
In what follows, we denote it by $j=(j_1,j_2)$.  
\par 
Since $u\in C_{u^{\infty}}((-T,T)\times \RR;N)$ in Theorem~\ref{theorem:momo}, 
there exist $B^{\infty}\in U(n_0)$ and 
$B=B(t,x): (-T,T)\times \RR \to U(n_0)$ such that
$u^{\infty}=B^{\infty}A_0(B_{\infty})^{*}$ and 
$u(t,x)=B(t,x)A_0(B(t,x))^{*}$. 
We take $e_j^{\infty}\in T_{u^{\infty}}N$ 
for each $j=(j_1,j_2)\in \left\{1,\ldots,n\right\}$
to satisfy 
\begin{equation}
e_j^{\infty}
=B_{\infty}
\begin{pmatrix}
0 & E_j \\
(E_j)^{*} & 0 \\
\end{pmatrix}
(B_{\infty})^{*}, 
\label{eq:base_j}
\end{equation}
where 
$E_j=E_{(j_1,j_2)}\in \mathcal{M}_{k_0\times m_0}$ denotes 
a constant matrix with entry $1$ where the $j_1$-th row and the 
$j_2$-th column meet, and all other entries being $0$. 
It is easy to see 
$E_{(j_1,j_2)}(E_{(\ell_1, \ell_2)})^{*}
=\delta_{j_2 \ell_2} E_{j_1,\ell_1}^{(k_0)}$ 
where $E_{j_1,\ell_1}^{(k_0)}\in 
\mathcal{M}_{k_0\times k_0}$ denotes
a square matrix with entry $1$ where the $j_1$-th row and the 
$\ell_1$-th column meet, and all other entries being $0$.
This combined with \eqref{eq:metN} shows 
$$
h_{u^{\infty}}(e_j^{\infty},e_{\ell}^{\infty})
=\operatorname{Re}\left[
\mathrm{tr}(E_j(E_{\ell})^{*})
\right]
=
\delta_{j_2 \ell_2}
\delta_{j_1 \ell_1}
=\delta_{j\ell}
\quad 
(\forall  j,\ell\in\left\{1,\ldots,n\right\}).
$$
Therefore, $\left\{e_j^{\infty}, Je_j^{\infty}\right\}_{j=1}^n$ 
is actually an orthonormal basis for $T_{u^{\infty}}N$. 
Let $\left\{e_j, Je_j\right\}_{j=1}^n$ be the associated orthonormal frame 
for $u^{-1}TN$ that satisfies \eqref{eq:mf01}-\eqref{eq:mfini}. 
The parallelity of $R$ and $J$ with respect to $\nabla$ 
shows that $e_j$, $J_ue_j$, $R(e_p,e_q)e_r$, and $R(e_p,J_ue_q)e_r$ 
are respectively the parallel displacement of 
$e_j^{\infty}$, $J_ue_j^{\infty}$, $R(e_p^{\infty},e_q^{\infty})e_r^{\infty}$, 
and $R(e_p^{\infty}, J_ue_q^{\infty})e_r^{\infty}$. 
In addition, $h$ is invariant under the parallel displacement.  
Therefore, 
the expression \eqref{eq:add11171} reduces to 
\begin{align}
\,S_{p,q,r}^j
&=
\frac{1}{2}
\left\{h(R(e_p^{\infty},e_q^{\infty})e_r^{\infty},e_j^{\infty})
+\sqrt{-1}h(R(e_p^{\infty},e_q^{\infty})e_r^{\infty},J_ue_j^{\infty})
\right\}
\nonumber
\\
&\quad
+
\frac{\sqrt{-1}}{2}
\left\{
h(R(e_p^{\infty},J_ue_q^{\infty})e_r^{\infty},e_j^{\infty})
+\sqrt{-1}h(R(e_p^{\infty},J_ue_q^{\infty})e_r^{\infty},J_ue_j^{\infty})
\right\}. 
\nonumber
\end{align} 
Here, we apply \eqref{eq:hR} for $A=u^{\infty}$
to deduce
\begin{align}
h(R(e_p^{\infty},e_q^{\infty})e_r^{\infty},e_j^{\infty})
&=
\operatorname{Re}
(\mathrm{tr}(\Xi_1)),
\nonumber
\end{align}
where
\begin{align}
\Xi_1&=
E_p (E_q)^{*}E_r(E_j)^{*}-E_q(E_p)^{*}E_r(E_j)^{*}
-E_r(E_p)^{*}E_q(E_j)^{*}+E_r(E_q)^{*}E_p(E_j)^{*}).
\nonumber
\end{align}
Moreover, noting 
\begin{equation}
J_ue_j^{\infty}
=B_{\infty}
\begin{pmatrix}
0 & \sqrt{-1}E_j \\
(\sqrt{-1}E_j)^{*} & 0 \\
\end{pmatrix}
(B_{\infty})^{*}, 
\nonumber
\end{equation}
we repeat the above computation replacing $E_j$ with $\sqrt{-1}E_j$, 
which provides
\begin{align}
h(R(e_p^{\infty},e_q^{\infty})e_r^{\infty}, J_ue_j^{\infty})
&=
\operatorname{Re}
(-\sqrt{-1}\mathrm{tr}(\Xi_1))
=
\operatorname{Im}
(\mathrm{tr}(\Xi_1)).
\nonumber
\end{align}
In the same way as above, we deduce 
\begin{align}
h(R(e_p^{\infty},J_ue_q^{\infty})e_r^{\infty},e_j^{\infty})
&=\operatorname{Im}
(\mathrm{tr}(\Xi_2)), 
\nonumber
\end{align}
where 
\begin{align}
\Xi_2&=
E_p (E_q)^{*}E_r(E_j)^{*}+E_q(E_p)^{*}E_r(E_j)^{*}
+E_r(E_p)^{*}E_q(E_j)^{*}+E_r(E_q)^{*}E_p(E_j)^{*}), 
\nonumber
\end{align}
and 
\begin{align}
h(R(e_p^{\infty},J_ue_q^{\infty})e_r^{\infty}, J_ue_j^{\infty})
&=
\operatorname{Im}
(-\sqrt{-1}\mathrm{tr}(\Xi_2))
=
-\operatorname{Re}
(\mathrm{tr}(\Xi_2)).
\nonumber
\end{align}
Combining them, we obtain 
\begin{align}
S_{p,q,r}^j
&=
\frac{1}{2}(\mathrm{tr}(\Xi_1)+\mathrm{tr}(\Xi_2))
=
\mathrm{tr}\left(
E_p (E_q)^{*}E_r(E_j)^{*}+E_r(E_q)^{*}E_p(E_j)^{*}
\right). 
\label{eq:Smomo}
\end{align}
Furthermore, set  
$p=(p_1,p_2)$, $q=(q_1,q_2)$, and $r=(r_1,r_2)$
where $p_1,q_1,r_1\in \left\{1,\ldots,k_0\right\}$ and 
$p_2,q_2,r_2\in \left\{1,\ldots,m_0\right\}$. 
A simple computation yields 
\begin{align}
E_p (E_q)^{*}E_r(E_j)^{*}
&=
\delta_{p_2q_2}E^{(k_0)}_{p_1,q_1}
\delta_{r_2j_2}E^{(k_0)}_{r_1,j_1}
=
\delta_{p_2q_2}\delta_{r_2j_2}
\delta_{q_1r_1}E^{(k_0)}_{p_1,j_1}, 
\end{align}
and thus
$$
\mathrm{tr}\left(E_p (E_q)^{*}E_r(E_j)^{*}\right)
=
\delta_{p_2q_2}\delta_{r_2j_2}
\delta_{q_1r_1}
\delta_{p_1j_1}.
$$
Hence, 
for any $p=(p_1,p_2), q=(q_1,q_2), r=(r_1,r_2), j=(j_1,j_2)\in \left\{1,\ldots,n\right\}$, 
\begin{align}
S_{p,q,r}^j
&=
\delta_{p_2q_2}\delta_{r_2j_2}
\delta_{q_1r_1}
\delta_{p_1j_1}
+
\delta_{r_2q_2}\delta_{p_2j_2}
\delta_{q_1p_1}
\delta_{r_1j_1}. 
\label{eq:Smomo2}
\end{align}
\par 
Based on \eqref{eq:Smomo2}, we proceed the computation of \eqref{eq:r51251}. 
First, we compute the form 
\begin{align}
\sum_{p, q,r=1}^n
S_{p,q,r}^jX_p\overline{Y_q}Z_r
&=
\sum_{p_1, q_1,r_1=1}^{k_0}\sum_{p_2, q_2,r_2=1}^{m_0}
S_{(p_1,p_2),(q_1,q_2),(r_1,r_2)}^{(j_1,j_2)}
X_{(p_1,p_2)}\overline{Y_{(q_1,q_2)}}Z_{(r_1,r_2)}, 
\nonumber
\end{align}
where $X_p, Y_q,Z_r$ for $p,q,r\in \left\{1,\ldots,n\right\}$ 
denote complex-valued functions of $(t,x)$. 
The right hand side of above is divided by the sum of $L_1$ and $L_2$: 
\begin{align}
L_1&=
\sum_{p_1, q_1,r_1=1}^{k_0}\sum_{p_2, q_2,r_2=1}^{m_0}
\delta_{p_2q_2}\delta_{r_2j_2}
\delta_{q_1r_1}
\delta_{p_1j_1}
X_{(p_1,p_2)}\overline{Y_{(q_1,q_2)}}Z_{(r_1,r_2)}, 
\nonumber
\\
L_2&=
\sum_{p_1, q_1,r_1=1}^{k_0}\sum_{p_2, q_2,r_2=1}^{m_0}
\delta_{r_2q_2}\delta_{p_2j_2}
\delta_{q_1p_1}
\delta_{r_1j_1}
X_{(p_1,p_2)}\overline{Y_{(q_1,q_2)}}Z_{(r_1,r_2)}.
\end{align}
By a simple computation, 
\begin{align}
L_1&=
\sum_{q_1,r_1=1}^{k_0}
\sum_{p_2, q_2=1}^{m_0}
\delta_{p_2q_2}
\delta_{q_1r_1}
X_{(j_1,p_2)}\overline{Y_{(q_1,q_2)}}Z_{(r_1,j_2)}
\nonumber
\\
&=
\sum_{r_1=1}^{k_0}
\sum_{p_2=1}^{m_0}
X_{(j_1,p_2)}\overline{Y_{(r_1,p_2)}}Z_{(r_1,j_2)},
\nonumber
\end{align}
\begin{align}
L_2&=
\sum_{p_1, q_1=1}^{k_0}
\sum_{q_2,r_2=1}^{m_0}
\delta_{r_2q_2}
\delta_{q_1p_1}
X_{(p_1,j_2)}\overline{Y_{(q_1,q_2)}}Z_{(j_1,r_2)}
\nonumber
\\
&=
\sum_{p_1=1}^{k_0}
\sum_{r_2=1}^{m_0}
Z_{(j_1,r_2)}\overline{Y_{(p_1,r_2)}}X_{(p_1,j_2)}. 
\nonumber
\end{align}
Combining them, we obtain 
\begin{align}
&\sum_{p, q,r=1}^n
S_{p,q,r}^jX_p\overline{Y_q}Z_r
\nonumber
\\
&=
\sum_{s_2=1}^{k_0}
\sum_{s_1=1}^{m_0}
X_{(j_1,s_1)}\overline{Y_{(s_2,s_1)}}Z_{(s_2,j_2)}
+
\sum_{s_2=1}^{k_0}
\sum_{s_1=1}^{m_0}
Z_{(j_1,s_1)}\overline{Y_{(s_2,s_1)}}X_{(s_2,j_2)}. 
\label{eq:37071}
\end{align}
Applying \eqref{eq:37071}, we have 
\begin{align}
&\sum_{p, q,r=1}^n
S_{p,q,r}^j
\p_x^2Q_p\overline{Q_q}Q_r
\nonumber
\\
&=
\sum_{s_2=1}^{k_0}
\sum_{s_1=1}^{m_0}
\p_x^2Q_{(j_1,s_1)}\overline{Q_{(s_2,s_1)}}Q_{(s_2,j_2)}
+
\sum_{s_2=1}^{k_0}
\sum_{s_1=1}^{m_0}
Q_{(j_1,s_1)}
\overline{Q_{(s_2,s_1)}}
\p_x^2Q_{(s_2,j_2)}, 
\label{eq:37072}
\\
&\sum_{p, q,r=1}^n
S_{p,q,r}^j
Q_p\overline{\p_x^2Q_q}Q_r
\nonumber
\\
&=
\sum_{s_2=1}^{k_0}
\sum_{s_1=1}^{m_0}
Q_{(j_1,s_1)}\overline{\p_x^2Q_{(s_2,s_1)}}Q_{(s_2,j_2)}
+
\sum_{s_2=1}^{k_0}
\sum_{s_1=1}^{m_0}
Q_{(j_1,s_1)}
\overline{\p_x^2Q_{(s_2,s_1)}}Q_{(s_2,j_2)}
\nonumber
\\
&=
2\sum_{s_2=1}^{k_0}
\sum_{s_1=1}^{m_0}
Q_{(j_1,s_1)}\overline{\p_x^2Q_{(s_2,s_1)}}Q_{(s_2,j_2)}, 
\label{eq:37073}
\\
&\sum_{p, q,r=1}^n
S_{p,q,r}^j
\p_xQ_p\overline{\p_xQ_q}Q_r
\nonumber
\\
&=
\sum_{s_2=1}^{k_0}
\sum_{s_1=1}^{m_0}
\p_xQ_{(j_1,s_1)}\overline{\p_xQ_{(s_2,s_1)}}Q_{(s_2,j_2)}
+
\sum_{s_2=1}^{k_0}
\sum_{s_1=1}^{m_0}
Q_{(j_1,s_1)}
\overline{\p_xQ_{(s_2,s_1)}}\p_xQ_{(s_2,j_2)}, 
\label{eq:37074}
\\
&\sum_{p, q,r=1}^n
S_{p,q,r}^j
\p_xQ_p\overline{Q_q}\p_xQ_r
=
2
\sum_{s_2=1}^{k_0}
\sum_{s_1=1}^{m_0}
\p_xQ_{(j_1,s_1)}\overline{Q_{(s_2,s_1)}}\p_xQ_{(s_2,j_2)}, 
\label{eq:37075}
\\
&\sum_{p, q,r=1}^n
S_{p,q,r}^j
Q_p\overline{Q_q}Q_r
=
2\sum_{s_2=1}^{k_0}
\sum_{s_1=1}^{m_0}
Q_{(j_1,s_1)}\overline{Q_{(s_2,s_1)}}Q_{(s_2,j_2)}. 
\label{eq:37076}
\end{align}
Next, we compute 
$$
\sum_{r=1}^{n}
\left(
\int_{-\infty}^x
f_{j,r}^1(Q,\p_xQ)(t,y)\,dy
\right)Q_r, 
$$
where $f_{j,r}^1(Q,\p_xQ)=-(b+2c)(S_1+S_2)+b(S_3+S_4)$ 
and 
$S_1,\ldots S_4$ are defined by \eqref{eq:S1}-\eqref{eq:S4}. 
By \eqref{eq:Smomo2}, we see
\begin{align}
&S_{p,q,r}^j\overline{S_{\alpha,\beta,\gamma}^q}
=S_{p,q,r}^j S_{\alpha,\beta,\gamma}^q
=
S_{(p_1,p_2),(q_1,q_2),(r_1,r_2)}^{(j_1,j_2)}
S_{(\alpha_1,\alpha_2),(\beta_1,\beta_2),(\gamma_1,\gamma_2)}^{(q_1,q_2)}
\nonumber
\\
&=
\delta_{p_2q_2}
\delta_{r_2j_2}
\delta_{q_1r_1}
\delta_{p_1j_1}
\delta_{\alpha_2\beta_2}
\delta_{\gamma_2q_2}
\delta_{\beta_1\gamma_1}
\delta_{\alpha_1q_1}
+\delta_{p_2q_2}
\delta_{r_2j_2}
\delta_{q_1r_1}
\delta_{p_1j_1}
\delta_{\gamma_2\beta_2}
\delta_{\alpha_2q_2}
\delta_{\beta_1\alpha_1}
\delta_{\gamma_1q_1}
\nonumber
\\
&\quad
+
\delta_{r_2q_2}
\delta_{p_2j_2}
\delta_{q_1p_1}
\delta_{r_1j_1}
\delta_{\alpha_2\beta_2}
\delta_{\gamma_2q_2}
\delta_{\beta_1\gamma_1}
\delta_{\alpha_1q_1}
+
\delta_{r_2q_2}
\delta_{p_2j_2}
\delta_{q_1p_1}
\delta_{r_1j_1}
\delta_{\gamma_2\beta_2}
\delta_{\alpha_2q_2}
\delta_{\beta_1\alpha_1}
\delta_{\gamma_1q_1}.
\nonumber
\end{align}
Let $X=\p_xQ$ and $Y=Z=W=Q$. 
It follows that 
\begin{align}
S_1&=\sum_{p,q,\alpha,\beta,\gamma=1}^n 
     S_{p,q,r}^{j}\overline{S_{\alpha,\beta,\gamma}^{q}}
     \overline{X_{\alpha}}Y_{\beta}\overline{Z_{\gamma}}
     W_p
=:L_3+L_4+L_5+L_6, 
\label{eq:long1}
\\
S_2
&=\sum_{p,q,\alpha,\beta,\gamma=1}^n 
     S_{p,q,r}^{j}S_{\alpha,\beta,\gamma}^{p}
     X_{\alpha}\overline{Y_{\beta}}Z_{\gamma}\overline{W_q}
=:L_7+L_8+L_9+L_{10},
\label{eq:long2} 
\end{align}
where  
\begin{align}
L_3
&=
\sum_{\sharp_1}^{k_0}
\sum_{\sharp_2}^{m_0}
\delta_{p_2q_2}
\delta_{r_2j_2}
\delta_{q_1r_1}
\delta_{p_1j_1}
\delta_{\alpha_2\beta_2}
\delta_{\gamma_2q_2}
\delta_{\beta_1\gamma_1}
\delta_{\alpha_1q_1}
\overline{X_{\alpha}}Y_{\beta}\overline{Z_{\gamma}}W_p,
\nonumber
\\
L_4&=
\sum_{\sharp_1}^{k_0}
\sum_{\sharp_2}^{m_0}
\delta_{p_2q_2}
\delta_{r_2j_2}
\delta_{q_1r_1}
\delta_{p_1j_1}
\delta_{\gamma_2\beta_2}
\delta_{\alpha_2q_2}
\delta_{\beta_1\alpha_1}
\delta_{\gamma_1q_1}
\overline{X_{\alpha}}Y_{\beta}\overline{Z_{\gamma}}W_p,
\nonumber
\\
L_5&=
\sum_{\sharp_1}^{k_0}
\sum_{\sharp_2}^{m_0}
\delta_{r_2q_2}
\delta_{p_2j_2}
\delta_{q_1p_1}
\delta_{r_1j_1}
\delta_{\alpha_2\beta_2}
\delta_{\gamma_2q_2}
\delta_{\beta_1\gamma_1}
\delta_{\alpha_1q_1}
\overline{X_{\alpha}}Y_{\beta}\overline{Z_{\gamma}}W_p,
\nonumber
\\
L_6&=
\sum_{\sharp_1}^{k_0}
\sum_{\sharp_2}^{m_0}
\delta_{r_2q_2}
\delta_{p_2j_2}
\delta_{q_1p_1}
\delta_{r_1j_1}
\delta_{\gamma_2\beta_2}
\delta_{\alpha_2q_2}
\delta_{\beta_1\alpha_1}
\delta_{\gamma_1q_1}
\overline{X_{\alpha}}Y_{\beta}\overline{Z_{\gamma}}W_p, 
\nonumber
\end{align}
and 
\begin{align}
L_7
&=
\sum_{\sharp_1}^{k_0}
\sum_{\sharp_2}^{m_0}
\delta_{p_2q_2}
\delta_{r_2j_2}
\delta_{q_1r_1}
\delta_{p_1j_1}
\delta_{\alpha_2\beta_2}
\delta_{\gamma_2p_2}
\delta_{\beta_1\gamma_1}
\delta_{\alpha_1p_1}
X_{\alpha}\overline{Y_{\beta}}Z_{\gamma}\overline{W_q},
\nonumber
\\
L_8&=
\sum_{\sharp_1}^{k_0}
\sum_{\sharp_2}^{m_0}
\delta_{p_2q_2}
\delta_{r_2j_2}
\delta_{q_1r_1}
\delta_{p_1j_1}
\delta_{\gamma_2\beta_2}
\delta_{\alpha_2p_2}
\delta_{\beta_1\alpha_1}
\delta_{\gamma_1p_1}
X_{\alpha}\overline{Y_{\beta}}Z_{\gamma}\overline{W_q},
\nonumber
\\
L_9&=
\sum_{\sharp_1}^{k_0}
\sum_{\sharp_2}^{m_0}
\delta_{r_2q_2}
\delta_{p_2j_2}
\delta_{q_1p_1}
\delta_{r_1j_1}
\delta_{\alpha_2\beta_2}
\delta_{\gamma_2p_2}
\delta_{\beta_1\gamma_1}
\delta_{\alpha_1p_1}
X_{\alpha}\overline{Y_{\beta}}Z_{\gamma}\overline{W_q},
\nonumber
\\
L_{10}&=
\sum_{\sharp_1}^{k_0}
\sum_{\sharp_2}^{m_0}
\delta_{r_2q_2}
\delta_{p_2j_2}
\delta_{q_1p_1}
\delta_{r_1j_1}
\delta_{\gamma_2\beta_2}
\delta_{\alpha_2p_2}
\delta_{\beta_1\alpha_1}
\delta_{\gamma_1p_1}
X_{\alpha}\overline{Y_{\beta}}Z_{\gamma}\overline{W_q}, 
\nonumber
\end{align}
and we use the notation  $\displaystyle\sum_{\sharp_1}^{k_0}:=\sum_{p_1,q_1,\alpha_1,\beta_1,\gamma_1=1}^{k_0}$
and $\displaystyle\sum_{\sharp_2}^{m_0}:=\sum_{p_2,q_2,\alpha_2,\beta_2,\gamma_2=1}^{m_0}$.
Note that $r=(r_1,r_2)$ and $j=(j_1,j_2)$ are fixed here. 
By a simple but a bit careful computation, we deduce 
\begin{align}
L_3
&=
\delta_{r_2j_2}
\sum_{q_1,\alpha_1,\beta_1,\gamma_1=1}^{k_0}
\sum_{p_2,q_2,\alpha_2,\beta_2,\gamma_2=1}^{m_0}
\nonumber
\\
&\qquad \times 
\delta_{p_2q_2}
\delta_{q_1r_1}
\delta_{\alpha_2\beta_2}
\delta_{\gamma_2q_2}
\delta_{\beta_1\gamma_1}
\delta_{\alpha_1q_1}
\overline{X_{(\alpha_1,\alpha_2)}}Y_{(\beta_1,\beta_2)}\overline{Z_{(\gamma_1,\gamma_2)}}
W_{(j_1,p_2)}
\nonumber
\\
&=
\delta_{r_2j_2}
\sum_{q_1,\beta_1,\gamma_1=1}^{k_0}
\sum_{p_2,q_2,\alpha_2,\beta_2=1}^{m_0}
\delta_{p_2q_2}
\delta_{q_1r_1}
\delta_{\alpha_2\beta_2}
\delta_{\beta_1\gamma_1}
\overline{X_{(q_1,\alpha_2)}}Y_{(\beta_1,\beta_2)}\overline{Z_{(\gamma_1,q_2)}}
W_{(j_1,p_2)}
\nonumber
\\
&=
\delta_{r_2j_2}
\sum_{\beta_1=1}^{k_0}
\sum_{q_2,\beta_2=1}^{m_0}
\overline{X_{(r_1,\beta_2)}}Y_{(\beta_1,\beta_2)}\overline{Z_{(\beta_1,q_2)}}
W_{(j_1,q_2)}, 
\label{eq:37077}
\end{align}
\begin{align}
L_4&=
\delta_{r_2j_2}
\sum_{q_1,\alpha_1,\beta_1,\gamma_1=1}^{k_0}
\sum_{p_2,q_2,\alpha_2,\beta_2,\gamma_2=1}^{m_0}
\nonumber
\\
&\qquad
\times
\delta_{p_2q_2}
\delta_{q_1r_1}
\delta_{\gamma_2\beta_2}
\delta_{\alpha_2q_2}
\delta_{\beta_1\alpha_1}
\delta_{\gamma_1q_1}
\overline{X_{(\alpha_1,\alpha_2)}}Y_{(\beta_1,\beta_2)}\overline{Z_{(\gamma_1,\gamma_2)}}
W_{(j_1,p_2)}
\nonumber
\\
&=
\delta_{r_2j_2}
\sum_{q_1,\alpha_1,\beta_1=1}^{k_0}
\sum_{p_2,q_2,\beta_2,\gamma_2=1}^{m_0}
\delta_{p_2q_2}
\delta_{q_1r_1}
\delta_{\gamma_2\beta_2}
\delta_{\beta_1\alpha_1}
\overline{X_{(\alpha_1,q_2)}}Y_{(\beta_1,\beta_2)}\overline{Z_{(q_1,\gamma_2)}}
W_{(j_1,p_2)}
\nonumber
\\
&=
\delta_{r_2j_2}
\sum_{\beta_1=1}^{k_0}
\sum_{q_2,\beta_2=1}^{m_0}
\overline{X_{(\beta_1,q_2)}}Y_{(\beta_1,\beta_2)}\overline{Z_{(r_1,\beta_2)}}
W_{(j_1,q_2)}, 
\label{eq:37078}
\end{align}
\begin{align}
L_5&=
\delta_{r_1j_1}
\sum_{p_1,q_1,\alpha_1,\beta_1,\gamma_1=1}^{k_0}
\sum_{q_2,\alpha_2,\beta_2,\gamma_2=1}^{m_0}
\nonumber
\\
&\qquad 
\times
\delta_{r_2q_2}
\delta_{q_1p_1}
\delta_{\alpha_2\beta_2}
\delta_{\gamma_2q_2}
\delta_{\beta_1\gamma_1}
\delta_{\alpha_1q_1}
\overline{X_{(\alpha_1,\alpha_2)}}Y_{(\beta_1,\beta_2)}\overline{Z_{(\gamma_1,\gamma_2)}}
W_{(p_1,j_2)}
\nonumber
\\
&=
\delta_{r_1j_1}
\sum_{p_1,q_1,\beta_1,\gamma_1=1}^{k_0}
\sum_{q_2,\alpha_2,\beta_2=1}^{m_0}
\delta_{r_2q_2}
\delta_{q_1p_1}
\delta_{\alpha_2\beta_2}
\delta_{\beta_1\gamma_1}
\overline{X_{(q_1,\alpha_2)}}Y_{(\beta_1,\beta_2)}
\overline{Z_{(\gamma_1,q_2)}}W_{(p_1,j_2)}
\nonumber
\\
&=
\delta_{r_1j_1}
\sum_{q_1,\beta_1=1}^{k_0}
\sum_{\beta_2=1}^{m_0}
\overline{X_{(q_1,\beta_2)}}Y_{(\beta_1,\beta_2)}\overline{Z_{(\beta_1,r_2)}}
W_{(q_1,j_2)},
\label{eq:37079}
\end{align}
\begin{align}
L_6
&=
\delta_{r_1j_1}
\sum_{p_1,q_1,\alpha_1,\beta_1,\gamma_1=1}^{k_0}
\sum_{q_2,\alpha_2,\beta_2,\gamma_2=1}^{m_0}
\nonumber
\\&\qquad \times 
\delta_{r_2q_2}
\delta_{q_1p_1}
\delta_{\gamma_2\beta_2}
\delta_{\alpha_2q_2}
\delta_{\beta_1\alpha_1}
\delta_{\gamma_1q_1}
\overline{X_{(\alpha_1,\alpha_2)}}Y_{(\beta_1,\beta_2)}\overline{Z_{(\gamma_1,\gamma_2)}}
W_{(p_1,j_2)}
\nonumber
\\
&=
\delta_{r_1j_1}
\sum_{p_1,q_1,\alpha_1,\beta_1=1}^{k_0}
\sum_{q_2,\beta_2,\gamma_2=1}^{m_0} 
\delta_{r_2q_2}
\delta_{q_1p_1}
\delta_{\gamma_2\beta_2}
\delta_{\beta_1\alpha_1}
\overline{X_{(\alpha_1,q_2)}}Y_{(\beta_1,\beta_2)}\overline{Z_{(q_1,\gamma_2)}}
W_{(p_1,j_2)}
\nonumber
\\
&=
\delta_{r_1j_1}
\sum_{q_1,\beta_1=1}^{k_0}
\sum_{\beta_2=1}^{m_0}
\overline{X_{(\beta_1,r_2)}}Y_{(\beta_1,\beta_2)}\overline{Z_{(q_1,\beta_2)}}
W_{(q_1,j_2)}. 
\label{eq:370710}
\end{align}
\begin{align}
L_7
&=
\delta_{r_2j_2}
\sum_{p_1,\alpha_1,\beta_1,\gamma_1=1}^{k_0}
\sum_{p_2,q_2,\alpha_2,\beta_2,\gamma_2=1}^{m_0}
\nonumber
\\
&\qquad\times
\delta_{p_2q_2}
\delta_{p_1j_1}
\delta_{\alpha_2\beta_2}
\delta_{\gamma_2p_2}
\delta_{\beta_1\gamma_1}
\delta_{\alpha_1p_1}
X_{(\alpha_1,\alpha_2)}\overline{Y_{(\beta_1,\beta_2)}}
Z_{(\gamma_1,\gamma_2)}\overline{W_{(r_1,q_2)}}
\nonumber
\\
&=
\delta_{r_2j_2}
\sum_{p_1,\beta_1,\gamma_1=1}^{k_0}
\sum_{p_2,q_2,\alpha_2,\beta_2=1}^{m_0}
\delta_{p_2q_2}
\delta_{p_1j_1}
\delta_{\alpha_2\beta_2}
\delta_{\beta_1\gamma_1}
X_{(p_1,\alpha_2)}\overline{Y_{(\beta_1,\beta_2)}}
Z_{(\gamma_1,p_2)}\overline{W_{(r_1,q_2)}}
\nonumber
\\
&=
\delta_{r_2j_2}
\sum_{\beta_1=1}^{k_0}
\sum_{q_2,\beta_2=1}^{m_0}
X_{(j_1,\beta_2)}\overline{Y_{(\beta_1,\beta_2)}}
Z_{(\beta_1,q_2)}\overline{W_{(r_1,q_2)}}, 
\label{eq:37081}
\end{align}
\begin{align}
L_8&=
\delta_{r_2j_2}
\sum_{p_1,\alpha_1,\beta_1,\gamma_1=1}^{k_0}
\sum_{p_2,q_2,\alpha_2,\beta_2,\gamma_2=1}^{m_0}
\nonumber
\\
&\qquad\times
\delta_{p_2q_2}
\delta_{p_1j_1}
\delta_{\gamma_2\beta_2}
\delta_{\alpha_2p_2}
\delta_{\beta_1\alpha_1}
\delta_{\gamma_1p_1}
X_{(\alpha_1,\alpha_2)}\overline{Y_{(\beta_1,\beta_2)}}
Z_{(\gamma_1,\gamma_2)}\overline{W_{(r_1,q_2)}}
\nonumber
\\
&=
\delta_{r_2j_2}
\sum_{p_1,\alpha_1,\beta_1=1}^{k_0}
\sum_{p_2,q_2,\beta_2,\gamma_2=1}^{m_0}
\delta_{p_2q_2}
\delta_{p_1j_1}
\delta_{\gamma_2\beta_2}
\delta_{\beta_1\alpha_1}
X_{(\alpha_1,p_2)}\overline{Y_{(\beta_1,\beta_2)}}
Z_{(p_1,\gamma_2)}\overline{W_{(r_1,q_2)}}
\nonumber
\\
&=
\delta_{r_2j_2}
\sum_{\beta_1=1}^{k_0}
\sum_{q_2,\beta_2=1}^{m_0}
X_{(\beta_1,q_2)}\overline{Y_{(\beta_1,\beta_2)}}
Z_{(j_1,\beta_2)}\overline{W_{(r_1,q_2)}}, 
\label{eq:37082}
\end{align}
\begin{align}
L_9&=
\delta_{r_1j_1}
\sum_{p_1,q_1,\alpha_1,\beta_1,\gamma_1=1}^{k_0}
\sum_{p_2,\alpha_2,\beta_2,\gamma_2=1}^{m_0}
\nonumber
\\
&\qquad\times
\delta_{p_2j_2}
\delta_{q_1p_1}
\delta_{\alpha_2\beta_2}
\delta_{\gamma_2p_2}
\delta_{\beta_1\gamma_1}
\delta_{\alpha_1p_1}
X_{(\alpha_1,\alpha_2)}\overline{Y_{(\beta_1,\beta_2)}}
Z_{(\gamma_1,\gamma_2)}\overline{W_{(q_1,r_2)}}
\nonumber
\\
&=
\delta_{r_1j_1}
\sum_{p_1,q_1,\beta_1,\gamma_1=1}^{k_0}
\sum_{p_2,\alpha_2,\beta_2=1}^{m_0}
\delta_{p_2j_2}
\delta_{q_1p_1}
\delta_{\alpha_2\beta_2}
\delta_{\beta_1\gamma_1}
X_{(p_1,\alpha_2)}\overline{Y_{(\beta_1,\beta_2)}}
Z_{(\gamma_1,p_2)}\overline{W_{(q_1,r_2)}}
\nonumber
\\
&=
\delta_{r_1j_1}
\sum_{q_1,\beta_1=1}^{k_0}
\sum_{\beta_2=1}^{m_0}
X_{(q_1,\beta_2)}\overline{Y_{(\beta_1,\beta_2)}}
Z_{(\beta_1,j_2)}\overline{W_{(q_1,r_2)}}, 
\label{eq:37083}
\end{align}
\begin{align}
L_{10}&=
\delta_{r_1j_1}
\sum_{p_1,q_1,\alpha_1,\beta_1,\gamma_1=1}^{k_0}
\sum_{p_2,\alpha_2,\beta_2,\gamma_2=1}^{m_0}
\nonumber
\\
&\qquad\times
\delta_{p_2j_2}
\delta_{q_1p_1}
\delta_{\gamma_2\beta_2}
\delta_{\alpha_2p_2}
\delta_{\beta_1\alpha_1}
\delta_{\gamma_1p_1}
X_{(\alpha_1,\alpha_2)}\overline{Y_{(\beta_1,\beta_2)}}
Z_{(\gamma_1,\gamma_2)}\overline{W_{(q_1,r_2)}}
\nonumber
\\
&=
\delta_{r_1j_1}
\sum_{p_1,q_1,\alpha_1,\beta_1=1}^{k_0}
\sum_{p_2,\beta_2,\gamma_2=1}^{m_0}
\delta_{p_2j_2}
\delta_{q_1p_1}
\delta_{\gamma_2\beta_2}
\delta_{\beta_1\alpha_1}
X_{(\alpha_1,p_2)}\overline{Y_{(\beta_1,\beta_2)}}
Z_{(p_1,\gamma_2)}\overline{W_{(q_1,r_2)}}
\nonumber
\\
&=
\delta_{r_1j_1}
\sum_{q_1,\beta_1=1}^{k_0}
\sum_{\beta_2=1}^{m_0}
X_{(\beta_1,j_2)}\overline{Y_{(\beta_1,\beta_2)}}
Z_{(q_1,\beta_2)}\overline{W_{(q_1,r_2)}}.
\label{eq:37084}
\end{align}
Summing \eqref{eq:37077}, \eqref{eq:37078}, \eqref{eq:37081} and \eqref{eq:37082}, 
and substituting $X=\p_xQ$, $Y=Z=W=Q$, 
we deduce
\begin{align}
L_3+L_4+L_7+L_8
&=
\delta_{r_2j_2}
\sum_{\beta_1=1}^{k_0}
\sum_{q_2,\beta_2=1}^{m_0}
\overline{\p_xQ_{(r_1,\beta_2)}}Q_{(\beta_1,\beta_2)}
\overline{Q_{(\beta_1,q_2)}}Q_{(j_1,q_2)}
\nonumber
\\
&\quad
+
\delta_{r_2j_2}
\sum_{\beta_1=1}^{k_0}
\sum_{q_2,\beta_2=1}^{m_0}
\overline{\p_xQ_{(\beta_1,q_2)}}Q_{(\beta_1,\beta_2)}
\overline{Q_{(r_1,\beta_2)}}Q_{(j_1,q_2)}
\nonumber
\\
&\quad
+
\delta_{r_2j_2}
\sum_{\beta_1=1}^{k_0}
\sum_{q_2,\beta_2=1}^{m_0}
\p_xQ_{(j_1,\beta_2)}\overline{Q_{(\beta_1,\beta_2)}}
Q_{(\beta_1,q_2)}\overline{Q_{(r_1,q_2)}}
\nonumber
\\&\quad
+
\delta_{r_2j_2}
\sum_{\beta_1=1}^{k_0}
\sum_{q_2,\beta_2=1}^{m_0}
\p_xQ_{(\beta_1,q_2)}\overline{Q_{(\beta_1,\beta_2)}}
Q_{(j_1,\beta_2)}\overline{Q_{(r_1,q_2)}}
\nonumber
\\
&=
\delta_{r_2j_2}
\sum_{\beta_1=1}^{k_0}
\sum_{q_2,\beta_2=1}^{m_0}
\overline{\p_xQ_{(r_1,\beta_2)}}Q_{(\beta_1,\beta_2)}
\overline{Q_{(\beta_1,q_2)}}Q_{(j_1,q_2)}
\nonumber
\\
&\quad
+
\delta_{r_2j_2}
\sum_{\beta_1=1}^{k_0}
\sum_{q_2,\beta_2=1}^{m_0}
\overline{\p_xQ_{(\beta_1,q_2)}}Q_{(\beta_1,\beta_2)}
\overline{Q_{(r_1,\beta_2)}}Q_{(j_1,q_2)}
\nonumber
\\
&\quad
+
\delta_{r_2j_2}
\sum_{\beta_1=1}^{k_0}
\sum_{q_2,\beta_2=1}^{m_0}
\p_xQ_{(j_1,q_2)}\overline{Q_{(\beta_1,q_2)}}
Q_{(\beta_1,\beta_2)}\overline{Q_{(r_1,\beta_2)}}
\nonumber
\\&\quad
+
\delta_{r_2j_2}
\sum_{\beta_1=1}^{k_0}
\sum_{q_2,\beta_2=1}^{m_0}
\p_xQ_{(\beta_1,\beta_2)}\overline{Q_{(\beta_1,q_2)}}
Q_{(j_1,q_2)}\overline{Q_{(r_1,\beta_2)}}
\nonumber
\\
&=
\p_x\left\{
\delta_{r_2j_2}
\sum_{\beta_1=1}^{k_0}
\sum_{q_2,\beta_2=1}^{m_0}
\overline{Q_{(r_1,\beta_2)}}Q_{(\beta_1,\beta_2)}
\overline{Q_{(\beta_1,q_2)}}Q_{(j_1,q_2)}
\right\}.
\label{eq:37085}
\end{align}
In the same way as above, 
summing \eqref{eq:37079}, \eqref{eq:370710}, \eqref{eq:37083}, and 
\eqref{eq:37084}, 
and substituting $X=\p_xQ$, $Y=Z=W=Q$, 
we deduce 
\begin{align}
L_5+L_6+L_9+L_{10}
&=
\delta_{r_1j_1}
\sum_{q_1,\beta_1=1}^{k_0}
\sum_{\beta_2=1}^{m_0}
\overline{\p_xQ_{(q_1,\beta_2)}}Q_{(\beta_1,\beta_2)}
\overline{Q_{(\beta_1,r_2)}}Q_{(q_1,j_2)}
\nonumber
\\
&\quad+
\delta_{r_1j_1}
\sum_{q_1,\beta_1=1}^{k_0}
\sum_{\beta_2=1}^{m_0}
\overline{\p_xQ_{(\beta_1,r_2)}}Q_{(\beta_1,\beta_2)}
\overline{Q_{(q_1,\beta_2)}}Q_{(q_1,j_2)}
\nonumber
\\
&\quad+
\delta_{r_1j_1}
\sum_{q_1,\beta_1=1}^{k_0}
\sum_{\beta_2=1}^{m_0}
\p_xQ_{(q_1,\beta_2)}\overline{Q_{(\beta_1,\beta_2)}}
Q_{(\beta_1,j_2)}\overline{Q_{(q_1,r_2)}}
\nonumber
\\
&\quad+
\delta_{r_1j_1}
\sum_{q_1,\beta_1=1}^{k_0}
\sum_{\beta_2=1}^{m_0}
\p_xQ_{(\beta_1,j_2)}\overline{Q_{(\beta_1,\beta_2)}}
Q_{(q_1,\beta_2)}\overline{Q_{(q_1,r_2)}}
\nonumber
\\
&=
\p_x\left\{
\delta_{r_1j_1}
\sum_{q_1,\beta_1=1}^{k_0}
\sum_{\beta_2=1}^{m_0}
\overline{Q_{(q_1,\beta_2)}}Q_{(\beta_1,\beta_2)}
\overline{Q_{(\beta_1,r_2)}}Q_{(q_1,j_2)}
\right\}. 
\label{eq:37086}
\end{align}
From \eqref{eq:37085} and \eqref{eq:37086}, we get 
\begin{align}
S_1+S_2
&=
\p_x\left\{
\delta_{r_1j_1}
\sum_{q_1,\beta_1=1}^{k_0}
\sum_{\beta_2=1}^{m_0}
\overline{Q_{(q_1,\beta_2)}}Q_{(\beta_1,\beta_2)}
\overline{Q_{(\beta_1,r_2)}}Q_{(q_1,j_2)}
\right\}
\nonumber
\\
&\quad +
\p_x\left\{
\delta_{r_2j_2}
\sum_{\beta_1=1}^{k_0}
\sum_{q_2,\beta_2=1}^{m_0}
\overline{Q_{(r_1,\beta_2)}}Q_{(\beta_1,\beta_2)}
\overline{Q_{(\beta_1,q_2)}}Q_{(j_1,q_2)}
\right\}. 
\label{eq:37087}
\end{align}
Using this and replacing indexes, we deduce  
\begin{align}
&\sum_{r=1}^n
\left(\int_{-\infty}^x
(S_1+S_2)(t,y)\,dy
\right)Q_r
\nonumber
\\
&=
\sum_{r_1=1}^{k_0}
\sum_{r_2=1}^{m_0}
\left(
\delta_{r_1j_1}
\sum_{q_1,\beta_1=1}^{k_0}
\sum_{\beta_2=1}^{m_0}
\overline{Q_{(q_1,\beta_2)}}Q_{(\beta_1,\beta_2)}
\overline{Q_{(\beta_1,r_2)}}Q_{(q_1,j_2)}
\right)
Q_{(r_1,r_2)}
\nonumber
\\
&\quad+
\sum_{r_1=1}^{k_0}
\sum_{r_2=1}^{m_0}
\left(
\delta_{r_2j_2}
\sum_{\beta_1=1}^{k_0}
\sum_{q_2,\beta_2=1}^{m_0}
\overline{Q_{(r_1,\beta_2)}}Q_{(\beta_1,\beta_2)}
\overline{Q_{(\beta_1,q_2)}}Q_{(j_1,q_2)}
\right)
Q_{(r_1,r_2)}
\nonumber
\\
&=
\sum_{q_1,\beta_1=1}^{k_0}
\sum_{r_2,\beta_2=1}^{m_0}
\overline{Q_{(q_1,\beta_2)}}Q_{(\beta_1,\beta_2)}
\overline{Q_{(\beta_1,r_2)}}Q_{(q_1,j_2)}
Q_{(j_1,r_2)}
\nonumber
\\
&\quad+
\sum_{r_1,\beta_1=1}^{k_0}
\sum_{q_2,\beta_2=1}^{m_0}
\overline{Q_{(r_1,\beta_2)}}Q_{(\beta_1,\beta_2)}
\overline{Q_{(\beta_1,q_2)}}Q_{(j_1,q_2)}
Q_{(r_1,j_2)}
\nonumber
\\
&=
\sum_{q_1,\beta_1=1}^{k_0}
\sum_{r_2,\beta_2=1}^{m_0}
Q_{(j_1,r_2)}
\overline{Q_{(\beta_1,r_2)}}
Q_{(\beta_1,\beta_2)}
\overline{Q_{(q_1,\beta_2)}}
Q_{(q_1,j_2)}
\nonumber
\\
&\quad +
\sum_{r_1,\beta_1=1}^{k_0}
\sum_{q_2,\beta_2=1}^{m_0}
Q_{(j_1,q_2)}
\overline{Q_{(\beta_1,q_2)}}
Q_{(\beta_1,\beta_2)}
\overline{Q_{(r_1,\beta_2)}}
Q_{(r_1,j_2)}
\nonumber
\\
&=
2
\sum_{s_2,s_4=1}^{k_0}
\sum_{s_1,s_3=1}^{m_0}
Q_{(j_1,s_1)}
\overline{Q_{(s_2,s_1)}}
Q_{(s_2, s_3)}
\overline{Q_{(s_4, s_3)}}
Q_{(s_4,j_2)}. 
\label{eq:S1S2}
\end{align}
Although the expression of $S_3+S_4$ can be obtained 
in the same way as above, 
$S_3+S_4$ is not expressed as an image of $\p_x$.  
Indeed, by applying \eqref{eq:37077}-\eqref{eq:37084} 
for $X=Z=W=Q$ and $Y=\p_xQ$, 
we see $S_3+S_4=(L_3+L_4+L_7+L_8)+(L_5+L_6+L_9+L_{10})$ where
\begin{align}
&L_3+L_4+L_7+L_8
\nonumber
\\
&=
\delta_{r_2j_2}
\sum_{\beta_1=1}^{k_0}
\sum_{q_2,\beta_2=1}^{m_0}
\overline{Q_{(r_1,\beta_2)}}\p_xQ_{(\beta_1,\beta_2)}
\overline{Q_{(\beta_1,q_2)}}Q_{(j_1,q_2)}
\nonumber
\\
&\quad
+
\delta_{r_2j_2}
\sum_{\beta_1=1}^{k_0}
\sum_{q_2,\beta_2=1}^{m_0}
\overline{Q_{(\beta_1,q_2)}}\p_xQ_{(\beta_1,\beta_2)}
\overline{Q_{(r_1,\beta_2)}}Q_{(j_1,q_2)}
\nonumber
\\
&\quad
+
\delta_{r_2j_2}
\sum_{\beta_1=1}^{k_0}
\sum_{q_2,\beta_2=1}^{m_0}
Q_{(j_1,q_2)}\overline{\p_xQ_{(\beta_1,q_2)}}
Q_{(\beta_1,\beta_2)}\overline{Q_{(r_1,\beta_2)}}
\nonumber
\\&\quad
+
\delta_{r_2j_2}
\sum_{\beta_1=1}^{k_0}
\sum_{q_2,\beta_2=1}^{m_0}
Q_{(\beta_1,\beta_2)}\overline{\p_xQ_{(\beta_1,q_2)}}
Q_{(j_1,q_2)}\overline{Q_{(r_1,\beta_2)}}
\nonumber
\\
&=
2
\delta_{r_2j_2}
\sum_{\beta_1=1}^{k_0}
\sum_{q_2,\beta_2=1}^{m_0}
Q_{(j_1,q_2)}
\p_x\left\{
\overline{Q_{(\beta_1,q_2)}}
Q_{(\beta_1,\beta_2)}
\right\}
\overline{Q_{(r_1,\beta_2)}},
\nonumber
\end{align}
\begin{align}
&L_5+L_6+L_9+L_{10}
\nonumber
\\
&=
\delta_{r_1j_1}
\sum_{q_1,\beta_1=1}^{k_0}
\sum_{\beta_2=1}^{m_0}
\overline{Q_{(q_1,\beta_2)}}\p_xQ_{(\beta_1,\beta_2)}
\overline{Q_{(\beta_1,r_2)}}Q_{(q_1,j_2)}
\nonumber
\\
&\quad+
\delta_{r_1j_1}
\sum_{q_1,\beta_1=1}^{k_0}
\sum_{\beta_2=1}^{m_0}
\overline{Q_{(\beta_1,r_2)}}\p_xQ_{(\beta_1,\beta_2)}
\overline{Q_{(q_1,\beta_2)}}Q_{(q_1,j_2)}
\nonumber
\\
&\quad+
\delta_{r_1j_1}
\sum_{q_1,\beta_1=1}^{k_0}
\sum_{\beta_2=1}^{m_0}
Q_{(q_1,\beta_2)}\overline{\p_xQ_{(\beta_1,\beta_2)}}
Q_{(\beta_1,j_2)}\overline{Q_{(q_1,r_2)}}
\nonumber
\\
&\quad+
\delta_{r_1j_1}
\sum_{q_1,\beta_1=1}^{k_0}
\sum_{\beta_2=1}^{m_0}
Q_{(\beta_1,j_2)}\overline{\p_xQ_{(\beta_1,\beta_2)}}
Q_{(q_1,\beta_2)}\overline{Q_{(q_1,r_2)}}
\nonumber
\\
&=
2\delta_{r_1j_1}
\sum_{q_1,\beta_1=1}^{k_0}
\sum_{\beta_2=1}^{m_0}
\p_x\left\{
\overline{Q_{(q_1,\beta_2)}}Q_{(\beta_1,\beta_2)}
\right\}
\overline{Q_{(\beta_1,r_2)}}Q_{(q_1,j_2)}. 
\nonumber
\end{align}
Hence we obtain 
\begin{align}
S_3+S_4
&=
2\delta_{r_1j_1}
\sum_{q_1,\beta_1=1}^{k_0}
\sum_{\beta_2=1}^{m_0}
\p_x\left\{
\overline{Q_{(q_1,\beta_2)}}Q_{(\beta_1,\beta_2)}
\right\}
\overline{Q_{(\beta_1,r_2)}}Q_{(q_1,j_2)}
\nonumber
\\
&\quad+
2
\delta_{r_2j_2}
\sum_{\beta_1=1}^{k_0}
\sum_{q_2,\beta_2=1}^{m_0}
Q_{(j_1,q_2)}
\p_x\left\{
\overline{Q_{(\beta_1,q_2)}}
Q_{(\beta_1,\beta_2)}
\right\}
\overline{Q_{(r_1,\beta_2)}}.
\label{eq:37088}
\end{align}
Using this and replacing indexes, 
we obtain 
\begin{align}
&\sum_{r=1}^n
\left(\int_{-\infty}^x
(S_3+S_4)(t,y)\,dy
\right)Q_r
\nonumber
\\
&=
2
\sum_{r_1, q_1,\beta_1=1}^{k_0}
\sum_{r_2,\beta_2=1}^{m_0}
\delta_{r_1j_1}
\left(
\int_{-\infty}^x
\p_x\left\{
\overline{Q_{(q_1,\beta_2)}}Q_{(\beta_1,\beta_2)}
\right\}
\overline{Q_{(\beta_1,r_2)}}Q_{(q_1,j_2)}\,dy
\right)
Q_{(r_1,r_2)}
\nonumber
\\
&\quad
+2\sum_{r_1,\beta_1=1}^{k_0}
\sum_{r_2, q_2,\beta_2=1}^{m_0}
\delta_{r_2j_2}
\left(
\int_{-\infty}^x
Q_{(j_1,q_2)}
\p_x\left\{
\overline{Q_{(\beta_1,q_2)}}
Q_{(\beta_1,\beta_2)}
\right\}
\overline{Q_{(r_1,\beta_2)}}
\,dy\right)
Q_{(r_1,r_2)}
\nonumber
\\
&=
2\sum_{q_1,\beta_1=1}^{k_0}
\sum_{r_2,\beta_2=1}^{m_0}
Q_{(j_1,r_2)}
\left(
\int_{-\infty}^x
\p_x\left\{
\overline{Q_{(q_1,\beta_2)}}Q_{(\beta_1,\beta_2)}
\right\}
\overline{Q_{(\beta_1,r_2)}}Q_{(q_1,j_2)}\,dy
\right)
\nonumber
\\
&\quad
+2\sum_{r_1,\beta_1=1}^{k_0}
\sum_{q_2,\beta_2=1}^{m_0}
\left(
\int_{-\infty}^x
Q_{(j_1,q_2)}
\p_x\left\{
\overline{Q_{(\beta_1,q_2)}}
Q_{(\beta_1,\beta_2)}
\right\}
\overline{Q_{(r_1,\beta_2)}}
\,dy\right)
Q_{(r_1,j_2)}
\nonumber
\\
&=
2\sum_{s_2,s_4=1}^{k_0}
\sum_{s_1, s_3=1}^{m_0}
Q_{(j_1,s_3)}
\left(
\int_{-\infty}^x
\overline{Q_{(s_4, s_3)}}
\p_x\left\{
Q_{(s_4,s_1)}
\overline{Q_{(s_2,s_1)}}
\right\}
Q_{(s_2,j_2)}\,dy
\right)
\nonumber
\\
&\quad
+2\sum_{s_2,s_4=1}^{k_0}
\sum_{s_1,s_3=1}^{m_0}
\left(
\int_{-\infty}^x
Q_{(j_1,s_1)}
\p_x\left\{
\overline{Q_{(s_2,s_1)}}
Q_{(s_2,s_3)}
\right\}
\overline{Q_{(s_4,s_3)}}
\,dy\right)
Q_{(s_4,j_2)}. 
\label{eq:S3S4}
\end{align}
Combining \eqref{eq:S1S2} and \eqref{eq:S3S4},   
we derive 
\begin{align}
&\sum_{r=1}^n
\left(
\int_{-\infty}^x
f_{j,r}^1(Q,\p_xQ)(t,y)\,dy
\right)Q_r
\nonumber
\\
&=
-2(b+2c)
\sum_{s_2,s_4=1}^{k_0}
\sum_{s_1,s_3=1}^{m_0}
Q_{(j_1,s_1)}
\overline{Q_{(s_2,s_1)}}
Q_{(s_2, s_3)}
\overline{Q_{(s_4, s_3)}}
Q_{(s_4,j_2)}
\nonumber
\\
&\quad
+2b
\sum_{s_2,s_4=1}^{k_0}
\sum_{s_1, s_3=1}^{m_0}
Q_{(j_1,s_3)}
\left(
\int_{-\infty}^x
\overline{Q_{(s_4, s_3)}}
\p_x\left\{
Q_{(s_4,s_1)}
\overline{Q_{(s_2,s_1)}}
\right\}
Q_{(s_2,j_2)}\,dy
\right)
\nonumber
\\
&\quad 
+2b
\sum_{s_2,s_4=1}^{k_0}
\sum_{s_1,s_3=1}^{m_0}
\left(
\int_{-\infty}^x
Q_{(j_1,s_1)}
\p_x\left\{
\overline{Q_{(s_2,s_1)}}
Q_{(s_2,s_3)}
\right\}
\overline{Q_{(s_4,s_3)}}
\,dy\right)
Q_{(s_4,j_2)}. 
\label{eq:37089}
\end{align}
On the other hand, since $G_{n_0,k_0}$ in this setting is 
Hermitian symmetric($\nabla R=0$),  we see 
$f_{j,r}^2(Q,\p_xQ)=0$ for any $j,r\in\left\{1,\ldots,n\right\}$ and hence 
\begin{align}
&\sum_{r=1}^n
\left(
\int_{-\infty}^x
f_{j,r}^2(Q,\p_xQ)(t,y)\,dy
\right)Q_r
=0. 
\nonumber
\end{align}
Finally, substituting 
\eqref{eq:37072}-\eqref{eq:37076}
and 
\eqref{eq:37089}
into \eqref{eq:r51251}, 
we obtain
\begin{align}
&\sqrt{-1}\p_tQ_j+(a\p_x^4+\lambda \p_x^2)Q_j
\nonumber
\\
&=
d_1
\sum_{s_2=1}^{k_0}
\sum_{s_1=1}^{m_0}
\p_x^2Q_{(j_1,s_1)}\overline{Q_{(s_2,s_1)}}Q_{(s_2,j_2)}
+
d_1
\sum_{s_2=1}^{k_0}
\sum_{s_1=1}^{m_0}
Q_{(j_1,s_1)}
\overline{Q_{(s_2,s_1)}}
\p_x^2Q_{(s_2,j_2)}
\nonumber
\\
&\quad
    +2d_2
\sum_{s_2=1}^{k_0}
\sum_{s_1=1}^{m_0}
Q_{(j_1,s_1)}\overline{\p_x^2Q_{(s_2,s_1)}}Q_{(s_2,j_2)}    
    \nonumber
    \\
    &\quad
    +d_3
\sum_{s_2=1}^{k_0}
\sum_{s_1=1}^{m_0}
\p_xQ_{(j_1,s_1)}\overline{\p_xQ_{(s_2,s_1)}}Q_{(s_2,j_2)}
+d_3
\sum_{s_2=1}^{k_0}
\sum_{s_1=1}^{m_0}
Q_{(j_1,s_1)}
\overline{\p_xQ_{(s_2,s_1)}}
\p_xQ_{(s_2,j_2)} 
        \nonumber
        \\
    &\quad
       +2d_4
 \sum_{s_2=1}^{k_0}
 \sum_{s_1=1}^{m_0}
 \p_xQ_{(j_1,s_1)}\overline{Q_{(s_2,s_1)}}\p_xQ_{(s_2,j_2)}      
    -2\lambda
    \sum_{s_2=1}^{k_0}
    \sum_{s_1=1}^{m_0}
    Q_{(j_1,s_1)}\overline{Q_{(s_2,s_1)}}Q_{(s_2,j_2)}
\nonumber
\\
&\quad
+(-2b-4c)
\sum_{s_2,s_4=1}^{k_0}
\sum_{s_1,s_3=1}^{m_0}
Q_{(j_1,s_1)}
\overline{Q_{(s_2,s_1)}}
Q_{(s_2, s_3)}
\overline{Q_{(s_4, s_3)}}
Q_{(s_4,j_2)}
\nonumber
\\
&\quad
+2b
\sum_{s_2,s_4=1}^{k_0}
\sum_{s_1, s_3=1}^{m_0}
Q_{(j_1,s_3)}
\left(
\int_{-\infty}^x
\overline{Q_{(s_4, s_3)}}
\p_x\left\{
Q_{(s_4,s_1)}
\overline{Q_{(s_2,s_1)}}
\right\}
Q_{(s_2,j_2)}\,dy
\right)
\nonumber
\\
&\quad 
+2b
\sum_{s_2,s_4=1}^{k_0}
\sum_{s_1,s_3=1}^{m_0}
\left(
\int_{-\infty}^x
Q_{(j_1,s_1)}
\p_x\left\{
\overline{Q_{(s_2,s_1)}}
Q_{(s_2,s_3)}
\right\}
\overline{Q_{(s_4,s_3)}}
\,dy\right)
Q_{(s_4,j_2)}. 
    \label{eq:Smomo3}
\end{align}
\begin{proof}[Proof of Corollary~\ref{cor:coro}] 
Under the setting \eqref{eq:11021}, it follows that 
$d_1=-2\beta+16\gamma$, 
$d_2=8\gamma$, 
$d_3=2\beta+32\gamma$, 
$d_4=-\beta+16\gamma$, 
$-2b-4c=-2\beta+32\gamma$, 
$2b=2\beta+16\gamma$, 
and thus the system of \eqref{eq:Smomo3} for 
$Q_1,\ldots,Q_n$   
is rewritten as 
\begin{align}
\sqrt{-1}q_t
&=
-\beta\,q_{xxxx}+\alpha\,q_{xx}
+(-2\beta+16\gamma)
(q_{xx}q^{*}q+qq^{*}q_{xx})
+16\gamma\,qq^{*}_{xx}q
\nonumber
\\
&\quad
+(2\beta+32\gamma)
(q_{x}q^{*}_xq+qq^{*}_xq_x)
+(-2\beta+32\gamma)
q_xq^{*}q_x
+2\alpha\,qq^{*}q
\nonumber
\\
&\quad
+(-2\beta+32\gamma)qq^{*}qq^{*}q
\nonumber
\\
&\quad
+(2\beta+16\gamma)
\left\{
q\left(\int_{-\infty}^x
q^{*}(qq^{*})_sq\,ds
\right)
+
\left(
\int_{-\infty}^x
q(q^{*}q)_sq^{*}\,ds
\right)q
\right\}
\label{eq:Ondq}
\end{align}
for $q=\left(Q_{(j_1,j_2)}\right)$ being an 
$\mathcal{M}_{k_0\times m_0}$-valued function 
whose $(j_1,j_2)$-component is $Q_{(j_1,j_2)}=Q_{(j_2-1)k_0+j_1}$. 
Furthermore, \eqref{eq:Ondq} is also formulated as follows:
\begin{align}
\sqrt{-1}q_t
&=
\alpha
\biggl\{
q_{xx}+2qq^{*}q\biggr\}
-\beta
\biggl\{
q_{xxxx}+4q_{xx}q^{*}q+2qq^{*}_{xx}q+4qq^{*}q_{xx}
\nonumber
\\
&\quad
+2q_xq^{*}_xq+6q_xq^{*}q_x+2qq^{*}_xq_x+6qq^{*}qq^{*}q
\biggr\}
\nonumber
\\
&\quad
+2(\beta+8\gamma)
\biggl\{
(qq^{*}q)_{xx}+2qq^{*}qq^{*}q
\nonumber
\\
&\qquad
+q\left(\int_{-\infty}^x
q^{*}(qq^{*})_sq\,ds
\right)
+
\left(
\int_{-\infty}^x
q(q^{*}q)_sq^{*}\,ds
\right)q
\biggr\}. 
\label{eq:Ondq2}
\end{align}
In addition, let $A(t):(-T,T)\to \mathfrak{u}(m_0)$ and 
$B(t):(-T,T)\to \mathfrak{u}(k_0)$ be defined by 
\begin{align}
A(t)&=2(\beta+8\gamma)\sqrt{-1}\left(\int_{-\infty}^0
q^{*}(qq^{*})_sq\,ds
\right), 
\nonumber
\\
B(t)&=2(\beta+8\gamma)\sqrt{-1}
\left(
\int_{-\infty}^0
q(q^{*}q)_sq^{*}\,ds
\right). 
\nonumber
\end{align}
Noting $(A(t))^{*}=-A(t)$ and $(B(t))^{*}=-B(t)$, 
we see there exist $y=y(t):(-T,T)\to U(m_0)$
and $z=z(t): (-T,T)\to U(k_0)$ 
such that 
$$
\frac{dy}{dt}=A(t)y, \quad y(0)=I_{m_0}, \quad 
\frac{dz}{dt}=zB(t), \quad z(0)=I_{k_0}. 
$$
It is easy to find that \eqref{eq:Ondq2} is transformed to \eqref{eq:mns}
by $q(t,x)\mapsto z(t)q(t,x)y(t)$. 
We omit the detail. 
\end{proof}
\begin{remark}
\label{remark:projectivecase} 
It is known that $G_{n+1,1}$ where $k_0=1$, $m_0=n_0-k_0=n$ is 
identified with the complex projective space $P_n(\mathbb{C})$ with the Fubini-Study metric, 
and is a complex K\"ahler manifold of complex dimension $n$ 
with constant holomorphic sectional curvature $K=4$ in our setting of $h$. 
Hence, \eqref{eq:Smomo2} and  \eqref{eq:Smomo3}   
should coincide with \eqref{eq:Spqrj2} and \eqref{eq:512132} with $K=4$ respectively. 
This may not be obvious immediately from the expressions of \eqref{eq:Smomo3} and \eqref{eq:512132}, but 
actually holds. See Appendix for the reason.
\end{remark}
\subsection{Relationship with the method in \cite{DW2018}}
\label{subsection:revisit}
Let $N=G_{n_0,k_0}$ be as above. 
Corollary~\ref{cor:coro} reveals that the system of 
\eqref{eq:r51251} for $Q_1, \ldots,Q_n$ with \eqref{eq:11021}
is essentially the same as \eqref{eq:mns} derived in \cite{DW2018}.  
However, one may want a more theoretical reason why they coincide with each other, 
since our method using the parallel orthonormal frame 
and that used to derive \eqref{eq:mns} in \cite{DW2018} are seemingly different. 
Hence, we here try to make a more convincing explanation of the reason 
by comparing the two methods. 
\par 
To begin with, we review the outline of the method in \cite{DW2018} briefly: 
The authors in  \cite{DW2018} started from identifying $G_{n_0,k_0}$ 
with 
$\left\{
E^{-1}\sigma_3E
\mid 
E\in U(n_0)
\right\}$ which 
is the adjoint orbit embedded 
in $\mathfrak{u}(n_0)$ at   
$
\sigma_3=
\dfrac{\sqrt{-1}}{2}
\begin{pmatrix}
I_{k_0} & 0 \\
0 & -I_{m_0} \\
\end{pmatrix}
\in \mathfrak{u}(n_0). 
$
We can see that the identification is verified by the one-to-one corresponding 
$
\Psi: 
G_{n_0,k_0}
\to 
\left\{
E^{-1}\sigma_3E
\mid 
E\in U(n_0)
\right\}
$
such that 
\begin{align}
\Psi(BA_0B^{*})
&=B\sigma_3B^{*}
\label{eq:iden}
\end{align}
for $A=BA_0B^{*}\in G_{n_0,k_0}$ with $B\in U(n_0)$. 
They next expressed the solution to the generalized bi-Schr\"odinger flow equation 
by $\varphi(t,x)=(E(t,x))^{-1}\sigma_3E(t,x)$. 
Here, $E=E(t,x):(-T,T)\times \RR\to U(n_0)$ and 
satisfies $E_x=PE$ for some 
$P=P(t,x):(-T,T)\times \RR\to \mathfrak{m}_{I}$, 
where $\mathfrak{m}_{I}$ is defined by \eqref{eq:236273}.
Based on the setting, they showed that 
the generalized bi-Schr\"odinger flow equation for $\varphi$ is equivalent to the 
fourth-order matrix-nonlinear Schr\"odinger-like equation for 
$P=E_xE^{-1}=E_xE^{*}$ up to a gauge transformation. 
Since $P$ takes values in $\mathfrak{m}_{I}$, 
\begin{equation}
P=
\begin{pmatrix}
0 & \mathbf{q} \\
-\mathbf{q}^{*} & 0 \\
\end{pmatrix}, 
\label{eq:P1018}
\end{equation} 
for some $\mathbf{q}=\mathbf{q}(t,x):(-T,T)\times \RR\to \mathcal{M}_{k_0\times m_0}$. 
(Equations satisfied by $P$ and $\mathbf{q}$ are respectively 
given by (42) and (62) in \cite{DW2018}, and (62) is just \eqref{eq:mns} for $q$ up to a gauge transformation.) 
Their proof is  based on the geometric concept of PDEs 
with given (non-zero) curvature representation. 
See \cite[Theorem~3]{DW2018} for more detail on their proof. 
\begin{remark}
\label{remark:TU} 
The embedding of $G_{n_0,k_0}$ in $\mathfrak{u}(n_0)$ was adopted also by 
Terng and Uhlenbeck \cite{TU} to show the equivalence of the Schr\"odinger flow 
equation for maps with values in $G_{n_0,k_0}$ and the matrix-nonlinear Schr\"odinger 
equation.  
In fact, from their results in \cite{TU}, 
it turns out that the above $E=E(t,x)$ exists uniquely under some assumptions on the 
map. For example, assume that $\varphi=\varphi(t,x):(-T,T)\times \RR\to 
\left\{
E^{-1}\sigma_3E
\mid 
E\in U(n_0)
\right\}$
is a smooth map such that 
$\displaystyle\lim_{x\to -\infty}\varphi(t,x)=\sigma_3$ and 
$\varphi_x(t,\cdot)$ 
is in the Schwartz class for any $t\in (-T,T)$. 
(The assumption on $\varphi$ 
is equivalently to $u\in C_{A_0}((-T,T)\times \RR; G_{n_0,k_0})$ 
for $u=\Psi^{-1}(\varphi):(-T,T)\times \RR\to G_{n_0,k_0}$.)
Then Corollary~3.3 in \cite{TU} shows that there exists a unique 
$E=E(t,x):(-T,T)\times \RR\to U(n_0)$ such that 
$\varphi(t,x)=(E(t,x))^{-1}\sigma_3E(t,x)$, 
$\displaystyle\lim_{x\to -\infty}E(t,x)=I$, and $EE^{*}_x$ takes values in $\mathfrak{m}_I$. 
\end{remark}
Next, we observe our method to derive \eqref{eq:Ondq}: 
Let  $u\in C_{u^{\infty}}((-T,T)\times \RR;G_{n_0,k_0})$ 
be a solution to \eqref{eq:pde} with \eqref{eq:11021} 
where $u^{\infty}=B^{\infty}A_0(B^{\infty})^{*}$ 
and $B^{\infty}\in U(n_0)$.
We can assume $B^{\infty}=I$ 
without loss of generality, by retaking a $G_{n_0,k_0}$-valued map $(B^{\infty})^{*}u(t,x)B^{\infty}$ 
as $u(t,x)$.  
Let $\left\{e_j,Je_j\right\}_{j=1}^n$ be the 
orthonormal frame for $u^{-1}TG_{n_0,k_0}$ 
constructed in Section~\ref{subsection:frame}, 
and let $Q_j:(-T,T)\times \RR\to \mathbb{C}$ for 
$j\in \left\{1,\ldots,n(=k_0m_0)\right\}$ 
be the functions defined by \eqref{eq:req10181} in Section~\ref{subsection:reduction}. 
We continue to denote $j=(j_1,j_2)$ for $j\in \left\{1,\ldots,n(=k_0m_0)\right\}$ 
if there exist $j_1\in \left\{1,\ldots,k_0\right\}$ and 
$j_2\in \left\{1,\ldots,m_0\right\}$ such that 
$j=(j_2-1)k_0+j_1$. 
As used in the proof of Corollary~\ref{cor:coro}, 
let $\left(Q_{(j_1,j_2)}\right)$ denote the $\mathcal{M}_{k_0\times m_0}$-valued 
function whose $(j_1,j_2)$-components are $Q_{(j_1,j_2)}$.  
\par 
The aim of this subsection is to verify the following:
\begin{proposition}
\label{proposition:revisit}
Under the assumption as above for $u\in C_{A_0}((-T,T)\times \RR;G_{n_0,k_0})$, 
the relation
\begin{equation}
P=
\begin{pmatrix}
0 & \left(Q_{(j_1,j_2)}\right) \\
-\left(Q_{(j_1,j_2)}\right) ^{*} & 0 \\
\end{pmatrix},
\label{eq:1019n} 
\end{equation}
holds, where $P$ is given by \eqref{eq:P1018}. 
\end{proposition}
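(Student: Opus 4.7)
The plan is to take the Ding-Wang frame $E: (-T,T)\times\RR \to U(n_0)$ (existing and unique by Remark~\ref{remark:TU}), set $B := E^*$, and verify that the associated matrix-valued frame $e_p := B M_p B^*$, where $M_p := \begin{pmatrix} 0 & E_p \\ E_p^* & 0 \end{pmatrix}$, coincides with the parallel orthonormal frame of Section~\ref{subsection:frame}. The proposition then follows by expanding $u_x$ in two ways and matching components.

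First, I would show $u = B A_0 B^*$ with $B(t,-\infty) = I$. From $\sigma_3 = \sqrt{-1}(A_0 - I/2)$ one has $\Psi(A) = \sqrt{-1}(A - I/2)$, so $\Psi(u) = E^{-1}\sigma_3 E = B\sigma_3 B^*$ combined with the injectivity of $\Psi$ gives $u = BA_0 B^*$. The initial condition $B(t,-\infty) = I$ follows from $E(t,-\infty) = I$. Setting $F := B^{-1}B_x = B^* B_x$ and differentiating $B^*B = I$ in $x$ yields $B^*_x B = -F$; combining with $E_x = PE$, equivalently $B^*_x B = P$, gives $P = -F$. Hence $F = -P \in \mathfrak{m}_I$, so $F_\mathfrak{k} = 0$ and $F = F_\mathfrak{m}$.

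Second, I verify $\{e_p, J_u e_p\}$ is the parallel frame. By a direct computation $\partial_x e_p = B[F, M_p]B^* = B[F_\mathfrak{m}, M_p]B^*$, and the symmetric-pair relation $[\mathfrak{m}_I, \mathfrak{m}_I] \subset \mathfrak{k}_I$ from Remark~\ref{remark:decom} places $[F_\mathfrak{m}, M_p]$ in $\mathfrak{k}_I$, making $B[F_\mathfrak{m}, M_p]B^*$ normal to $T_u G_{n_0,k_0}$ in $H(n_0)$; therefore $\nabla_x e_p = 0$. Combined with $e_p(t,-\infty) = M_p = e_p^\infty$ (from \eqref{eq:base_j} with $B^\infty = I$), this identifies $e_p$ and $J_u e_p$ with the parallel frame of \eqref{eq:mf01}-\eqref{eq:mfini}. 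To finish, writing $P = \begin{pmatrix} 0 & \mathbf{q} \\ -\mathbf{q}^* & 0 \end{pmatrix}$ gives $F = \begin{pmatrix} 0 & -\mathbf{q} \\ \mathbf{q}^* & 0 \end{pmatrix}$ and $u_x = B[F, A_0]B^* = B\begin{pmatrix} 0 & \mathbf{q} \\ \mathbf{q}^* & 0 \end{pmatrix}B^*$. On the other hand, expanding $u_x = \sum_p(\xi_p e_p + \eta_p J_u e_p)$ using the parallel frame and $Q_p = \xi_p + \sqrt{-1}\eta_p$ collects to $u_x = B\begin{pmatrix} 0 & (Q_{(j_1,j_2)}) \\ (Q_{(j_1,j_2)})^* & 0 \end{pmatrix}B^*$. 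Comparing the two expressions forces $\mathbf{q} = (Q_{(j_1,j_2)})$, which is \eqref{eq:1019n}.

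The main obstacle is the second step: confirming that $e_p = BM_pB^*$ with $B = E^*$ really is the parallel frame. This reduces to the observation that the symmetric-pair structure of $G_{n_0,k_0}$ forces $\partial_x e_p$ to land entirely in the normal direction to $T_u G_{n_0,k_0}$ once $F_\mathfrak{k} = 0$, which in turn is guaranteed automatically by the Ding-Wang gauge condition $P \in \mathfrak{m}_I$. All other steps are routine matrix algebra using the basis $\{E_p\}$ of $\mathcal{M}_{k_0\times m_0}$.
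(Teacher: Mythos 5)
Your argument is correct, but it reaches \eqref{eq:1019n} by a genuinely different route from the paper. The paper starts from $u$ and constructs the co-diagonal lifting $C$ of \cite{Andruchow}, characterized by $C_x=[u_x,u]C$, $C(t,-\infty)=I$; it then shows $C^{*}C_x\in\mathfrak{m}_I$ by matrix manipulation of the lifting identity, invokes the uniqueness statement of Remark~\ref{remark:TU} to conclude $C=E^{*}$, and cites \cite{Andruchow} for the fact that conjugation by $C$ implements parallel transport, so that $e_j=Ce_j^{\infty}C^{*}$; finally it computes $Q_j=h(u_x,e_j)+\sqrt{-1}h(u_x,J_ue_j)$ via \eqref{eq:metN}. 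You instead take the Ding--Wang frame $E$ as given (using Remark~\ref{remark:TU} only for its existence and uniqueness), set $B=E^{*}$, and verify \emph{directly} that $e_p=BM_pB^{*}$ is the parallel frame: the gauge condition $P\in\mathfrak{m}_I$ gives $F=B^{*}B_x=-P\in\mathfrak{m}_I$, and the bracket relation $[\mathfrak{m}_I,\mathfrak{m}_I]\subset\mathfrak{k}_I$ of Remark~\ref{remark:decom} forces $\p_xe_p=B[F,M_p]B^{*}$ to be block-diagonal Hermitian after conjugation, hence normal to $T_uG_{n_0,k_0}$. This buys a self-contained proof that avoids the co-diagonal lifting machinery altogether, at the cost of one point you should state explicitly: the step ``normal $\Rightarrow\nabla_xe_p=0$'' uses that the Levi-Civita connection of $(G_{n_0,k_0},h)$ is the tangential projection of the flat derivative of the ambient space $H(n_0)$, which requires checking that $h$ defined by \eqref{eq:metN} coincides (up to an irrelevant constant factor) with the metric induced by the embedding \eqref{eq:Gnk}; this is a one-line trace computation, $\tfrac12\mathrm{tr}(\Delta_1\Delta_2)=\Re\,\mathrm{tr}(\omega_1\omega_2^{*})$, and is consistent with the paper's use of Dimitri\'c's formula \eqref{eq:Rgr}, but it is the hinge of your argument and should not be left implicit. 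You also implicitly use uniqueness of the parallel frame with prescribed limit at $x=-\infty$ (the norm of the difference of two parallel sections is constant and vanishes at $-\infty$), which is worth a sentence. The final comparison of $u_x=B[F,A_0]B^{*}$ with the frame expansion is the same computation as the paper's evaluation of $Q_j$, only organized entrywise rather than through $h$.
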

This shows that the two $\mathcal{M}_{k_0\times m_0}$-valued functions 
$\mathbf{q}$ obtained in \cite{DW2018} and 
$\left(Q_{(j_1,j_2)}\right)$ constructed by our method 
(and thus the equations satisfied by them)  
essentially coincide with each other. 
Therefore, we can be convinced that Corollary~\ref{cor:coro} holds without 
doing the computation in Section~\ref{subsection:comGr}.
\par  
The key of the proof is that both  
$\left\{e_j,Je_j\right\}_{j=1}^n$ and 
$\left(Q_{(j_1,j_2)}\right)$
can be expressed explicitly with the aid of the co-diagonal lifting
(or the horizontal lifting) of $u(t,\cdot):\RR\to G_{n_0,k_0}$.   
\begin{proof}[Proof of Proposition~\ref{proposition:revisit}]
Recall that $\displaystyle\lim_{x\to -\infty}u(t,x)=u^{\infty}=A_0$ here. 
By following the argument in \cite{Andruchow}, 
a map $C=C(t,\cdot):\RR\to U(n_0)$ is called a co-diagonal lifting of $u=u(t,\cdot):\RR\to 
G_{n_0,k_0}$ for each fixed $t\in(-T,T)$, if 
\begin{equation}
C(t,x)A_0(C(t,x))^{*}=u(t,x) 
\ \text{and} \  
\sqrt{-1}(C(t,x))^{*}C_x(t,x)\in T_{u(t,x)}G_{n_0,k_0}
\label{eq:lifting}
\end{equation}
hold for any $x\in \RR$. 
We pick a co-diagonal lifting $C=C(t,x)$ of $u$ satisfying 
$C(t,-\infty):=\displaystyle\lim_{x\to -\infty}C(t,x)=I$ for any $t$. 
By the argument to show Lemma~2.6 in \cite{Andruchow}, 
such a co-diagonal lifting exists uniquely and  
is characterized as the unique solution to 
$C_x=[u_x,u]C$ satisfying $C(t,-\infty)=I$, 
where $[u_x,u]=u_xu-uu_x$. 
\par 
We investigate the expression of $C^{*}C_x$.  
It is immediate to see $C^{*}C=I$ and thus   
$C^{*}C_x+(C_x)^{*}C=O$ holds, 
since $C$ is $U(n_0)$-valued. 
This implies $C^{*}C_x=-(C_x)^{*}C=-(C^{*}C_x)^{*}$ 
and hence $C^{*}C_x$ is $\mathfrak{u}(n_0)$-valued. 
Therefore, we can write 
\begin{equation}
C^{*}C_x=
\begin{pmatrix}
C_{11} & C_{12} \\
-C_{12} ^{*} & C_{22} \\
\end{pmatrix},
\label{eq:10183}
\end{equation}
where $C_{11}=C_{11}(t,x):(-T,T)\times \RR\to \mathfrak{u}(k_0)$, 
$C_{22}=C_{22}(t,x):(-T,T)\times \RR\to \mathfrak{u}(m_0)$, and 
$C_{12}=C_{12}(t,x):(-T,T)\times \RR\to \mathcal{M}_{k_0\times m_0}$.  
On the other hand, $u=CA_0C^{*}$ in \eqref{eq:lifting} and 
$CC^{*}=I$ yields 
\begin{align}
u_x&=C_xA_0C^{*}+CA_0(C_x)^{*}
=
C
\left\{
C^{*}C_xA_0+A_0(C_x)^{*}C
\right\}
C^{*}. 
\label{eq:10181}
\end{align}
Using this, $C^{*}C=I$ and $A_0^2=A_0$, we deduce   
\begin{align}
[u_x,u]
&=
C
\left\{
C^{*}C_xA_0+A_0(C_x)^{*}C
\right\}
C^{*}
CA_0C^{*}
\nonumber
\\
&\quad
-CA_0C^{*}
C
\left\{
C^{*}C_xA_0+A_0(C_x)^{*}C
\right\}
C^{*}
\nonumber
\\
&=
C
\left\{
C^{*}C_xA_0
+
A_0(C_x)^{*}CA_0
-A_0C^{*}C_xA_0
-A_0(C_x)^{*}C
\right\}
C^{*}. 
\nonumber
\end{align}
Substituting this into 
$C^{*}C_x=C^{*}[u_x,u]C$ which follows from $C_x=[u_x,u]C$,  
and using $CC^{*}=C^{*}C=I$
and $(C_x)^{*}C=-C^{*}C_x$ which follows from $C^{*}C=I$, we deduce 
\begin{align}
C^{*}C_x&=
C^{*}C_xA_0
+
A_0(C_x)^{*}CA_0
-A_0C^{*}C_xA_0
-A_0(C_x)^{*}C
\nonumber
\\
&=
C^{*}C_xA_0+A_0C^{*}C_x
-2A_0C^{*}C_xA_0. 
\label{eq:10182}
\end{align}
Therefore, substituting \eqref{eq:10183} into the right hand side of \eqref{eq:10182}, 
we obtain
\begin{align}
C^{*}C_x
&=
\begin{pmatrix}
C_{11} & C_{12} \\
-C_{12} ^{*} & C_{22} \\
\end{pmatrix}
\begin{pmatrix}
I_{k_0} & 0 \\
0 & 0 \\
\end{pmatrix}
+
\begin{pmatrix}
I_{k_0} & 0 \\
0 & 0 \\
\end{pmatrix}
\begin{pmatrix}
C_{11} & C_{12} \\
-C_{12} ^{*} & C_{22} \\
\end{pmatrix}
\nonumber
\\
&\quad 
-2
\begin{pmatrix}
I_{k_0} & 0 \\
0 & 0 \\
\end{pmatrix}
\begin{pmatrix}
C_{11} & C_{12} \\
-C_{12} ^{*} & C_{22} \\
\end{pmatrix}
\begin{pmatrix}
I_{k_0} & 0 \\
0 & 0 \\
\end{pmatrix}
\nonumber
\\
&=
\begin{pmatrix}
0 & C_{12} \\
-C_{12} ^{*} & 0 \\
\end{pmatrix}, 
\label{eq:10184}
\end{align} 
which implies $C^{*}C_x$ takes values in $\mathfrak{m}_I$.
If we adopt the identification \eqref{eq:iden}, 
then what we obtained here is interpreted as 
$\Psi(u)(t,x)=C(t,x)\sigma_3(C(t,x))^{*}$, 
where $C=C(t,x):(-T,T)\times \RR\to U(n_0)$, $C^{*}(t,-\infty)=I$, and 
$C^{*}C_x$ takes values in $\mathfrak{m}_I$.
Hence, the uniqueness result stated in Remark~\ref{remark:TU} verifies 
$C=E^{-1}(=E^{*})$. From this, we see
\begin{align}
P&=E_xE^{*}=(C_x)^{*}C=-C^{*}C_x
=-\begin{pmatrix}
0 & C_{12} \\
-C_{12} ^{*} & 0 \\
\end{pmatrix}.
\label{eq:10185}
\end{align}
\par 
We next investigate the expression of $Q_j$, 
which by \eqref{eq:uxt}-\eqref{eq:req10181} is represented as  
$$
Q_j=h(u_x,e_j)+\sqrt{-1}h(u_x,J_ue_j).
$$
Set $j=(j_1,j_2)\in \left\{1,\ldots,n\right\}$. 
Recall that 
$e_j$ is constructed as the parallel transport of $e_j^{\infty}\in T_{u^{\infty}}G_{n_0,k_0}$ along 
$u(t,\cdot):\RR\to G_{n_0,k_0}$, 
and $e_j^{\infty}$ is defined by \eqref{eq:base_j}. 
In the present setting,  
$B^{\infty}=I$ and $u^{\infty}=A_0$, and hence we can write  
$$
e_{j}^{\infty}
=
\begin{pmatrix}
0 & E_{(j_1,j_2)} \\
(E_{(j_1,j_2)})^{*} & 0 \\
\end{pmatrix}. 
$$  
Note that the parallel transport of tangent vectors on $G_{n_0,k_0}$ along 
$u(t,\cdot)=C(t,\cdot)A_0(C(t,\cdot))^{*}$ 
can be represented by using the co-diagonal lifting $C(t,\cdot)$. 
Indeed, following the argument in \cite{Andruchow},  
we see  
$e_j(t,\cdot)=C(t,\cdot)e_j^{\infty}(C(t,\cdot))^{*}$, that is, 
\begin{align}
e_j(t,x)&=C(t,x)
\begin{pmatrix}
0 & E_{(j_1,j_2)} \\
(E_{(j_1,j_2)})^{*} & 0 \\
\end{pmatrix} 
(C(t,x))^{*}.
\label{eq:10191}
\end{align} 
From the expression, it follows that 
\begin{align}
J_ue_j(t,x)&=C(t,x)
\begin{pmatrix}
0 & \sqrt{-1}E_{(j_1,j_2)} \\
(\sqrt{-1}E_{(j_1,j_2)})^{*} & 0 \\
\end{pmatrix} 
(C(t,x))^{*}.
\label{eq:10192}
\end{align}
In addition, using $(C_x)^*C=-C^*C_x$ and substituting \eqref{eq:10184} into \eqref{eq:10181}, 
we deduce  
\begin{align}
u_x
&=
C
\left\{
C^{*}C_xA_0-A_0C^{*}C_x
\right\}
C^{*}
=
C
\begin{pmatrix}
0 & -C_{12} \\
(-C_{12}) ^{*} & 0 \\
\end{pmatrix}
C^*. 
\label{eq:10193}
\end{align}
Let us also recall \eqref{eq:metN} to see 
$
h(\Delta_1,\Delta_2)=
\operatorname{Re}\left[
\mathrm{tr}(\omega_1(\omega_2)^{*})
\right]
$
for  
$\Delta_k=
C
\begin{pmatrix}
0 & \omega_k \\
(\omega_k) ^{*} & 0 \\
\end{pmatrix}
C^*
\in T_{u}G_{n_0,k_0}
(k=1,2).
$
Then, using \eqref{eq:10191}-\eqref{eq:10193}, we have 
\begin{align}
h(u_x,e_j)
&=\operatorname{Re}\left[
\mathrm{tr}((-C_{12})(E_{(j_1,j_2)})^{*})
\right], 
\nonumber
\\
h(u_x,Je_j)
&=\operatorname{Re}\left[
\mathrm{tr}((-C_{12})(\sqrt{-1}E_{(j_1,j_2)})^{*})
\right]
=
\operatorname{Im}\left[
\mathrm{tr}((-C_{12})(E_{(j_1,j_2)})^{*})
\right]. 
\nonumber
\end{align}
This shows  
$Q_j(t,x)=\mathrm{tr}((-C_{12}(t,x))(E_{(j_1,j_2)})^{*})$. 
Since the right hand side equals to the $(j_1,j_2)$-component of 
$(-C_{12})(t,x)\in \mathcal{M}_{k_0\times m_0}$, we see 
$\left(
Q_{(j_1,j_2)}
\right)=-C_{12}$, that is,  
$$
\begin{pmatrix}
0 & \left(Q_{(j_1,j_2)}\right) \\
-\left(Q_{(j_1,j_2)}\right) ^{*} & 0 \\
\end{pmatrix}
=
-
\begin{pmatrix}
0 & C_{12} \\
-C_{12}^{*} & 0 \\
\end{pmatrix}. 
$$
Comparing this with \eqref{eq:10185}, 
we derive the desired
\eqref{eq:1019n}, which completes the proof.  
\end{proof}
%
%
\section*{Acknowledgments}
This work was supported by 
JSPS Grant-in-Aid for Scientific Research (C) 
Grant Numbers JP16K05235, JP20K03703, JP24K06813.
%
%
%
%
%
 %
 \section{Appendix}
 \label{section:Appendix}
Supplemental comments on Remark~\ref{remark:projectivecase} 
are presented below. 
\par 
Let $N=G_{n+1,1}$ where $k_0=1$, $m_0=n_0-k_0=n$. 
As commented in Remark~\ref{remark:projectivecase}, 
\eqref{eq:Smomo2} and  \eqref{eq:Smomo3} 
turn out to be the same as 
\eqref{eq:Spqrj2} and \eqref{eq:512132} for $K=4$ respectively. 
We here state the outline of how to check it. 
Since $p_1=q_1=r_1=j_1=1$ here,  
it is immediate to see \eqref{eq:Smomo2} becomes 
$$
S_{p,q,r}^j=
\delta_{p_2q_2}
\delta_{r_2j_2}
+
\delta_{r_2q_2}
\delta_{p_2j_2}
=\delta_{pq}
\delta_{rj}
+
\delta_{rq}
\delta_{pj}, 
$$ 
the right hand side of which actually coincides with \eqref{eq:Spqrj2} for $K=4$. 
Second, 
any index $j\in \left\{1,\ldots,n\right\}$ is expressed as $j=(1,j_2)$ by a unique $j_2\in\left\{1,\ldots,n\right\}$, and  
\eqref{eq:Smomo3} for $Q_{(1,j_2)}$ turns out to reduce to \eqref{eq:512132} 
for $Q_j=Q_{(1,j_2)}$.
We omit the detail, except to show that the sum of the final three terms of the right 
hand side of \eqref{eq:Smomo3} actually reduces to that of 
the final two terms of the right hand side of \eqref{eq:512132}, 
because the correspondence of the other terms is obvious. 
To show this, set 
\begin{align}
I_1&=
\sum_{s_2,s_4=1}^{k_0}
\sum_{s_1,s_3=1}^{m_0}
Q_{(j_1,s_1)}
\overline{Q_{(s_2,s_1)}}
Q_{(s_2, s_3)}
\overline{Q_{(s_4, s_3)}}
Q_{(s_4,j_2)}, 
\nonumber
\\
I_2&=
\sum_{s_2,s_4=1}^{k_0}
\sum_{s_1, s_3=1}^{m_0}
Q_{(j_1,s_3)}
\left(
\int_{-\infty}^x
\overline{Q_{(s_4, s_3)}}
\p_x\left\{
Q_{(s_4,s_1)}
\overline{Q_{(s_2,s_1)}}
\right\}
Q_{(s_2,j_2)}\,dy
\right)
\nonumber
\\
I_3&=
\sum_{s_2,s_4=1}^{k_0}
\sum_{s_1,s_3=1}^{m_0}
\left(
\int_{-\infty}^x
Q_{(j_1,s_1)}
\p_x\left\{
\overline{Q_{(s_2,s_1)}}
Q_{(s_2,s_3)}
\right\}
\overline{Q_{(s_4,s_3)}}
\,dy\right)
Q_{(s_4,j_2)}.
\nonumber
\end{align}
We compute $(-2b-4c)I_1+2bI_2+2bI_3$ where $k_0=1$, $m_0=n$ and 
$Q_j=Q_{(1,j_2)}$.
A simple computation shows 
\begin{align}
I_1&=
\sum_{s_1,s_3=1}^{n}
Q_{(1,s_1)}
\overline{Q_{(1,s_1)}}
Q_{(1, s_3)}
\overline{Q_{(1, s_3)}}
Q_{(1,j_2)}
=
\sum_{s_1=1}^{n}
|Q_{(1,s_1)}|^2
\sum_{s_3=1}^n
|Q_{(1, s_3)}|^2
Q_{(1,j_2)}
\nonumber
\\
&=
|Q|^4Q_j, 
\nonumber
\\
I_2
&=
\sum_{s_1, s_3=1}^{n}
Q_{(1,s_3)}
\left(
\int_{-\infty}^x
\overline{Q_{(1, s_3)}}
\p_x\left\{
Q_{(1,s_1)}
\overline{Q_{(1,s_1)}}
\right\}
Q_{(1,j_2)}\,dy
\right)
\nonumber
\\
&=
\sum_{s_3=1}^{n}
Q_{(1,s_3)}
\left(
\int_{-\infty}^x
\overline{Q_{(1, s_3)}}
\p_x\left(
|Q|^2
\right)
Q_{(1,j_2)}\,dy
\right)
\nonumber
\\
&=
\sum_{r=1}^{n}
\left(
\int_{-\infty}^x
\overline{Q_{r}}
Q_j
\p_x\left(
|Q|^2
\right)
\,dy
\right)Q_r.
\nonumber
\end{align}
The fundamental theorem of calculus shows  
\begin{align}
I_3&=
\sum_{s_1,s_3=1}^{n}
\left(
\int_{-\infty}^x
Q_{(1,s_1)}
\p_x\left\{
\overline{Q_{(1,s_1)}}
Q_{(1,s_3)}
\right\}
\overline{Q_{(1,s_3)}}
\,dy\right)
Q_{(1,j_2)}
\nonumber
\\
&=
\sum_{s_1,s_3=1}^{n}
Q_{(1,s_1)}
\overline{Q_{(1,s_1)}}
Q_{(1,s_3)}
\overline{Q_{(1,s_3)}}
Q_{(1,j_2)}
\nonumber
\\
&\quad
-\sum_{s_1,s_3=1}^{n}
\left(
\int_{-\infty}^x
Q_{(1,s_3)}
\p_x\left\{
\overline{Q_{(1,s_3)}}
Q_{(1,s_1)}
\right\}
\overline{Q_{(1,s_1)}}
\,dy\right)
Q_{(1,j_2)}
\nonumber
\\
&=
|Q|^4Q_j-I_3, 
\nonumber
\end{align}
which implies 
$I_3=\dfrac{1}{2}|Q|^4Q_j$. Combining them, we see 
\begin{align}
&(-2b-4c)I_1+2bI_2+2bI_3
\nonumber
\\
&=
(-b-4c)|Q|^4Q_j
+2b
\sum_{r=1}^{n}
\left(
\int_{-\infty}^x
\overline{Q_{r}}
Q_j
\p_x\left(
|Q|^2
\right)
\,dy
\right)Q_r, 
\nonumber
\end{align}
which 
actually 
equals to the sum of the final two terms of the 
right hand side of \eqref{eq:512132}  for $K=4$.

\end{document}